\documentclass[11pt, reqno]{amsart}
\usepackage{natbib}
\usepackage{graphicx,amsmath,amssymb,epsfig}
\usepackage{rotating}
\usepackage{array}
\usepackage{multirow}
\usepackage{pdflscape}

\oddsidemargin +0.1in
\evensidemargin +0.1in
\topmargin 0pt \textheight 8.7in \textwidth 6.35in
\linespread{1.3}\parskip .05in

\vfuzz2pt 
\hfuzz2pt 
\newtheorem{theorem}{Theorem}[section]

\newtheorem{lemma}{Lemma}[section]
\newtheorem{proposition}{Proposition}[section]
\newtheorem{assumption}{A\!\!}
\theoremstyle{definition}

\newtheorem{remark}{Comment}[section]



\newcommand{\eps}{\varepsilon}

\newcommand{\be}{\begin{eqnarray}}
\newcommand{\ee}{\end{eqnarray}}

\newcommand{\ba}{\begin{array}}
\newcommand{\ea}{\end{array}}
\newcommand{\bs}{\begin{align}\begin{split}\nonumber}
\newcommand{\bsnumber}{\begin{align}\begin{split}}
\newcommand{\es}{\end{split}\end{align}}

\newcommand{\MS}{\mathcal{S}}
\newcommand{\MH}{\mathcal{H}}

\renewcommand{\(}{\left(}
\renewcommand{\)}{\right)}
\renewcommand{\hat}{\widehat}

\newcommand{\Ep}{{\mathrm{E}}}

\renewcommand{\Pr}{{\mathrm{P}}}

\def\RR{ {\mathbb{R}}}

\renewcommand{\hat}{\widehat}
\renewcommand{\leq}{\leqslant}
\renewcommand{\geq}{\geqslant}
\newcommand{\sign}{ {\rm sign}}

\providecommand{\tabularnewline}{\\}

\renewcommand\[{\begin{equation}}
\renewcommand\]{\end{equation}}

\begin{document}

\title[Testing Regression Monotonicity]{Testing Regression Monotonicity in Econometric Models}


\author[Denis Chetverikov]{Denis Chetverikov}

\date{First version: March 2012. This version: \today. Email: chetverikov@econ.ucla.edu. I thank Victor
Chernozhukov for encouragement and guidance. I am also grateful to Anna Mikusheva, Isaiah Andrews, Andres Aradillas-Lopez, Moshe Buchinsky, Glenn Ellison, Jin Hahn, Bo Honore, Rosa Matzkin, Jose Montiel, Ulrich Muller, Whitney Newey, and Jack Porter for valuable comments. The first version
of the paper was presented at the Econometrics lunch at MIT in April,
2012.}

\begin{abstract}
Monotonicity is a key qualitative prediction of a wide array of economic models derived via robust comparative statics. It is therefore important to design effective and practical econometric methods for testing this prediction in empirical analysis. This paper develops a general nonparametric framework for testing monotonicity of a regression function. Using this framework, a broad class of new tests is introduced, which gives an empirical researcher a lot of flexibility to incorporate ex ante information she might have. The paper also develops new methods for simulating critical values, which are based on the combination of a bootstrap procedure and new selection algorithms. These methods yield tests that have correct asymptotic size and are asymptotically nonconservative. It is also shown how to obtain an adaptive rate optimal test that has the best attainable rate of uniform consistency against models whose regression function has Lipschitz-continuous first-order derivatives and that automatically adapts to the unknown smoothness of the regression function. Simulations show that the power of the new tests in many cases significantly exceeds that of some prior tests, e.g. that of Ghosal, Sen, and Van der Vaart (2000). An application of the developed procedures to the dataset of Ellison and Ellison (2011) shows that there is some evidence of strategic entry deterrence in pharmaceutical industry where incumbents may use strategic investment to prevent generic entries when their patents expire.
\end{abstract}

\maketitle

\section{Introduction}


The concept of monotonicity plays an important role in economics. For
example, monotone comparative statics has been a popular research
topic in economic theory for many years; see, in particular, the seminal
work on this topic by \cite{MilgromShannon} and \cite{Athey2002}. \cite{Matzkin94} mentions monotonicity as one of the most important implications of economic theory that can be used in econometric analysis. Given importance of monotonicity in economic theory, the natural
question is whether we observe monotonicity in the data. Although there do exist some methods for testing monotonicity in statistics, there is no general theory that would suffice for empirical analysis in economics. For example, I am not aware of any test of monotonicity that would allow for multiple covariates. In addition, there are currently no published results on testing monotonicity that would allow for endogeneity of covariates. Such a theory is provided in this paper. In particular, this paper provides a general nonparametric framework for testing monotonicity of a regression function. 
Tests of monotonicity developed in this paper can be used to evaluate
assumptions and implications of economic theory concerning monotonicity. In addition, as was recently noticed by \cite{EllisonEllison}, these tests can also be used to provide evidence
of existence of certain phenomena related to strategic behavior of economic agents that are difficult to detect otherwise.
Several motivating examples are presented in the next section. 

I start with the model 
\begin{equation}
Y=f(X)+\varepsilon\label{eq: Model}
\end{equation}
where $Y$ is a scalar dependent random variable, $X$
a scalar independent random variable, $f(\cdot)$ an unknown function,
and $\varepsilon$ an
unobserved scalar random variable satisfying $\Ep[\varepsilon|X]=0$ almost surely. Later in the paper, I extend the analysis to cover models with multivariate and endogenous $X$'s. I am interested in testing the null hypothesis, $\MH_{0}$,
that $f(\cdot)$ is nondecreasing against the alternative, $\MH_{a}$, that
there are $x_1$ and $x_2$ such that $x_1<x_2$ but $f(x_1)>f(x_2)$.
The decision is to be made based on the i.i.d. sample of size $n$, $\{X_i,Y_i\}_{1\leq i\leq n}$ from the distribution of $(X,Y)$.
I assume that $f(\cdot)$ is smooth but do not impose any parametric structure on it.
I derive a theory that yields tests with the correct asymptotic size. I also show how to obtain consistent tests and how to obtain a test with the optimal rate of uniform consistency
against classes of functions with Lipschitz-continuous first order derivatives. Moreover, the rate optimal test constructed in this paper is adaptive in the sense that it automatically adapts to the unknown smoothness of $f(\cdot)$.

This paper makes several contributions.
First, I introduce a general framework for testing monotonicity. This framework allows me to develop a broad class of new tests, which also includes some existing tests as special cases. This gives a researcher a lot of flexibility to incorporate ex ante information she might have. Second, I develop new methods to simulate the critical values for these tests that in many cases yield higher power than that of existing methods. Third, I consider the problem of testing monotonicity in models with multiple covariates for the first time in the literature. As will be explained in the paper, these models are more difficult to analyze and require a different treatment in comparison with the case of univariate $X$. Finally, I consider models with endogenous $X$ that are identified via instrumental variables, and I consider models with sample selection.

Providing a \textit{general} framework for testing monotonicity is a difficult problem. The problem arises because different test statistics studied in this paper have different limit distributions and require different normalizations. Some of the test statistics have $N(0,1)$ limit distribution, and some others have an extreme value limit distribution. Importantly, there are also many test statistics that are ``in between'', so that their distributions are far both from $N(0,1)$ and from extreme value distributions, and so their asymptotic approximations are difficult to obtain. Moreover, and equally important, the limit distribution of the statistic that leads to the rate optimal and adaptive test is \textit{unknown}. The main difficulty here is that the processes underlying the test statistic do not have an asymptotic equicontinuity property, and so classical functional central limit theorems, as presented for example in \cite{VaartWellner1996} and \cite{Dudley1999}, do not apply. This paper addresses these issues and provide bootstrap critical values that are valid uniformly over a large class of different test statistics and different data generating processes. Two previous papers, \cite{HallHeckman2000} and \cite{GSV2000}, used specific techniques to prove validity of their tests of monotonicity but it is difficult to generalize their techniques to make them applicable for other tests of monotonicity. In contrast, in this paper, I introduce a general approach that can be used to prove validity of many different tests of monotonicity. Other shape restrictions, such as concavity and super-modularity, can be tested by procedures similar to those developed in this paper.

Another problem is that test statistics studied in this paper have
some asymptotic distribution when $f(\cdot)$ is constant but diverge
if $f(\cdot)$ is strictly increasing. This discontinuity implies
that for some sequences of models $f(\cdot)=f_{n}(\cdot)$,
the limit distribution depends
on the local slope function, which is an unknown infinite-dimensional
nuisance parameter that can not be estimated consistently from the
data. A common approach in the literature to solve this problem is to calibrate
the critical value using the case when the type I error is maximized (the least favorable model),
i.e. the model with constant $f(\cdot)$.\footnote{The exception is \cite{WangAndMeyer2011}
who use the model with an isotonic estimate of $f(\cdot)$ to simulate
the critical value. They do not prove whether their test maintains
the required size, however.} In contrast, I develop two selection procedures
that estimate the set where $f(\cdot)$ is not strictly increasing, and then adjust the critical value to account for this set. The estimation is conducted so that no violation of
the asymptotic size occurs. The critical values obtained using these
selection procedures yield important power improvements in comparison
with other tests if $f(\cdot)$ is strictly increasing over some subsets
of the support of $X$. The first selection procedure, which is based on the one-step approach, is
related to those developed in \cite{ChernozhukovLeeRosen2009}, \cite{AndrewsandShi2010},
and \cite{Chetverikov2012}, all of which deal with the problem of testing
conditional moment inequalities. The second selection procedure is novel and is based on the step-down approach. It is somewhat related to methods developed in \cite{RomanoWolf2005} and \cite{RomanoShaikh2010} but the details are rather different.

Further, an important issue that applies to nonparametric testing in general is how
to choose a smoothing parameter for the test. In theory, the optimal smoothing
parameter can be derived for many smoothness classes of functions
$f(\cdot)$. In practice, however, the smoothness class that $f(\cdot)$
belongs to is usually unknown. I deal with this problem by employing the adaptive testing approach. This allows me to obtain tests with good power properties when
the information about smoothness of the function $f(\cdot)$ possessed
by the researcher is absent or limited.
More precisely, I construct a test statistic using many different weighting functions that correspond to many different values of the smoothing parameter so that the distribution of the test statistic is mainly determined by the
optimal weighting function. I provide a basic set of weighting functions that yields a rate optimal and adaptive test and show how the researcher can change this set in order to incorporate ex ante information. Importantly, the approach taken in this paper does not require ``under-smoothing''. This feature of my approach is important because, to the best of my knowledge, all procedures in the literature to achieve ``under-smoothing'' are ad hoc and do not have a sound theoretical justification.

The literature on testing monotonicity of a nonparametric regression function is quite large. The tests of \cite{Gijbels2000} and
\cite{GSV2000} (from now on, GHJK and GSV, respectively) are based on the signs of $(Y_{i+k}-Y_{i})(X_{i+k}-X_{i})$.
\cite{HallHeckman2000} (from now on, HH) developed a test based on the slopes of local
linear estimates of $f(\cdot)$. The list of other papers includes \cite{Schlee82},
\cite{Bowman98}, \cite{Dumbgen2001}, \cite{Durot2003}, \cite{Beraud2005},
and \cite{WangAndMeyer2011}. In a contemporaneous work, \cite{LSW1} derive another approach to testing monotonicity based on $L_p$-functionals. The results in this paper complement the results of that paper. An advantage of their method is that the asymptotic distribution of their test statistic in the least favorable model under $\MH_0$ turns out to be $N(0,1)$, so that obtaining a critical value for their test is computationally very simple. A disadvantage of their method, however, is that their test is not adaptive. Results in this paper are also different from those in \cite{RomanoWolf11} who also consider the problem of testing monotonicity. In particular, they assume that $X$ is non-stochastic and discrete, which makes their problem semi-parametric and substantially simplifies proving validity of critical values, and they test the null hypothesis that $f(\cdot)$ is \textit{not} weakly increasing against the alternative that it is weakly increasing. \cite{LLW2009} and \cite{DelgadoEscanciano2010} derived tests of stochastic monotonicity, which is a related but different problem. Specifically, stochastic monotonicity
means that the conditional cdf of $Y$ given $X$, $F_{Y|X}(y,x)$,
is (weakly) decreasing in $x$ for any fixed $y$.

As an empirical application of the results developed in this paper, I
consider the problem of detecting strategic entry deterrence in the pharmaceutical
industry. In that industry, incumbents whose drug patents are about
to expire can change their investment behavior in order to prevent
generic entries after the expiration of the patent. Although there
are many theoretically compelling arguments as to how and why incumbents
should change their investment behavior (see, for example, \cite{Tirole_book}),
the empirical evidence is rather limited. \cite{EllisonEllison}
showed that, under certain conditions, the dependence of investment
on market size should be monotone if no strategic entry deterrence
is present. In addition, they noted that the entry deterrence motive
should be important in intermediate-sized markets and less
important in small and large markets. Therefore, strategic entry deterrence
might result in the non-monotonicity of the relation between market
size and investment. Hence, rejecting the null hypothesis of
monotonicity provides the evidence in favor of the existence of
strategic entry deterrence. I apply the tests developed in this paper
to Ellison and Ellison's dataset and show that there is some evidence of non-monotonicity
in the data. The evidence is rather weak, though.

The rest of the paper is organized as follows. Section \ref{sec: motivating examples} provides motivating examples. Section \ref{sec: test} describes
the general test statistic and gives several methods to simulate the critical value.
Section \ref{sec: theory under high level conditions} contains the main results under high-level conditions when there are no additional covariates. 
Since in most practically relevant
cases, the model also contains some additional covariates, Section
\ref{sec: multivariate models} studies the cases of fully nonparametric and partially linear models with multiple covariates. Section \ref{sec: endogenous covariates} extends the analysis to cover the case where $X$ is endogenous and identification is achieved via instrumental variables. Section \ref{sec: sample selection models} briefly explains how to test monotonicity in sample selection models.
Section \ref{sec: monte carlo} presents a small Monte Carlo simulation study. Section \ref{sec: empirical example}
describes the empirical application. Section \ref{sec: conclusion} concludes. All proofs
are contained in the Appendix. In addition, Appendix \ref{sec: algorithms} contains implementation details, and Appendix \ref{sec: verification of high level conditions} is devoted to the verification of high-level conditions under primitive assumptions.

\textit{Notation}. Throughout this paper, let $\{\epsilon_i\}$ denote a sequence of independent $N(0,1)$ random variables that are independent of the data. The sequence $\{\epsilon_i\}$ will be used in bootstrapping critical values.  The notation $i=\overline{1,n}$ is a shorthand for $i\in\{1,...,n\}$. For any set $\MS$, I denote the number of elements in this set by $|\MS|$. 

\section{Motivating Examples}\label{sec: motivating examples}
Many testable implications of economic theory are concerned with comparative
statics analysis. These implications most often take the form of qualitative
statements like ``Increasing factor $X$ will positively (negatively)
affect response variable $Y$''. The common approach to test such
implications on the data is to look at the corresponding coefficient in the linear
(or other parametric) regression. Relying on these strong parametric assumptions, however, can lead to highly misleading results. For example, the test based on the linear regression will not be consistent and the test based on the quadratic regression may severely over-reject if the model is misspecified.
In contrast, this paper provides a class of tests that are valid without these strong parametric assumptions.
The purpose of this section is to give three examples from the literature where tests developed in this paper can be applied.

\textbf{1. Detecting strategic effects.} Certain strategic effects,
the existence of which is difficult to prove otherwise, can be detected by testing for monotonicity. An example on strategic entry deterrence in
the pharmaceutical industry is described in the Introduction and is analyzed in
Section \ref{sec: empirical example}. Below I provide another
example concerned with the problem of debt pricing. This example is based on \cite{MorrisShin2001}. Consider a model
where investors hold a collateralized debt. The debt will yield a
fixed payment, say 1, in the future if it is rolled over and an underlying
project is successful. Otherwise the debt will yield nothing (0). Alternatively,
all investors have an option of not rolling over and getting the value
of the collateral, $\kappa\in(0,1)$, immediately. The probability that the project turns
out to be successful depends on the fundamentals, $\theta$, and on how many investors
roll over. Specifically, assume that the project is successful if $\theta$ exceeds the proportion of investors who roll over. Under \textit{global game reasoning}, if private information possessed by investors is sufficiently accurate, the project will succeed if and only if $\theta\geq \kappa$; see \cite{MorrisShin2001} for details. Then ex ante value of the debt is given by
$$
V(\kappa)=\kappa\cdot\Pr(\theta<\kappa)+1\cdot\Pr(\theta\geq \kappa),
$$
and the derivative of the ex ante debt value with respect to the collateral value is
$$
\frac{dV(\kappa)}{d\kappa}=\Pr(\theta<\kappa)-(1-\kappa)\frac{d\Pr(\theta<\kappa)}{d\kappa}
$$
The first and second terms on the right hand side of this equation represent direct and strategic effects, respectively. The strategic effect represents coordination failure among investors. It arises because high value
of the collateral leads investors to believe that many other investors
will not roll over, and the project will not be successful even though the project is profitable ($\kappa<1$). \cite{MorrisShin2004} argue that this
effect is important for understanding anomalies in empirical implementation
of the standard debt pricing theory of \cite{Merton74}. A natural
question is how to prove existence of this effect in the data. Note that in the absence of strategic effect, the relation between value of the debt and value of the collateral will be monotonically increasing. If strategic effect is sufficiently strong, however, it can cause non-monotonicity in this relation. Therefore, one can detect the existence of the strategic effect and coordination failure by testing whether conditional mean of the price of the debt
given the value of the collateral is a monotonically increasing function.
Rejecting the null hypothesis of monotonicity provides evidence
in favor of the existence of the strategic effect and coordination failure.

\textbf{2. Testing assumptions of treatment effect models.} Monotonicity
is often assumed in the econometrics literature on estimating treatment
effects. A widely used econometric model in this literature is as
follows. Suppose that we observe a sample of individuals, $i=\overline{1,n}$.
Each individual has a random response function $y_{i}(t)$ that gives
her response for each level of treatment $t\in T$. Let $z_{i}$ and
$y_{i}=y_{i}(z_{i})$ denote the realized level of the treatment and
the realized response, respectively (both are observable).
The problem is how to derive inference on $E[y_{i}(t)]$. To address this problem, \cite{ManskiPepper} introduced assumptions of
monotone treatment response, which imposes that $y_{i}(t_{2})\geq y_{i}(t_{1})$
whenever $t_{2}\geq t_{1}$, and monotone treatment selection, which imposes
that $E[y_{i}(t)|z_{i}=v]$ is increasing in $v$ for all $t\in T$.
The combination of these assumptions yields a testable prediction.
Indeed, for all $v_{2}\geq v_{1}$,
\begin{align*}
E[y_{i}|z_{i}=v_{2}] & =  E[y_{i}(v_{2})|z_{i}=v_{2}]\\
 & \geq E[y_{i}(v_{1})|z_{i}=v_{2}]
 \geq  E[y_{i}(v_{1})|z_{i}=v_{1}]
  = E[y_{i}|z_{i}=v_{1}].
\end{align*}
Since both $z_i$ and $y_i$ are observed, this prediction can be tested by the
procedures developed in this paper. Note that the tests of stochastic monotonicity as described in the Introduction do not apply here since the testable prediction is monotonicity of the conditional mean function.


\textbf{3. Testing the theory of the firm.}
A classical paper \cite{HolmstromMilgrom94} on the
theory of the firm is built around the observation that in multi-task
problems different incentive instruments are expected
to be complementary to each other. Indeed, increasing an incentive
for one task may lead the agent to spend too much time on
that task ignoring other responsibilities. This can be avoided if
incentives on different tasks are balanced with each other. To derive
testable implications of the theory, Holmstrom and Milgrom study
a model of industrial selling introduced in \cite{AndersonSchmittlein}
where a firm chooses between an in-house agent and an independent representative who divide their time into four tasks: (i) direct sales,
(ii) investing in future sales to customers, (iii) non-sale activities,
such as helping other agents, and (iv) selling the products of other
manufacturers. Proposition 4 in their paper states that under certain
conditions, the conditional probability of having an
in-house agent is a (weakly) increasing function of the marginal
cost of evaluating performance and is a (weakly) increasing function
of the importance of non-selling activities. These are hypotheses that
can be directly tested on the data by procedures developed in this
paper. This would be an important extension of linear regression
analysis performed, for example, in \cite{AndersonSchmittlein} and
\cite{PoppoZenger98}. Again, note that the tests of stochastic monotonicity as described in the Introduction do not apply here.



\section{The Test}\label{sec: test}

\subsection{The General Test Statistic}
Recall that I consider a model given in equation (\ref{eq: Model}), and the test should be based on the i.i.d. sample $\{X_i,Y_i\}_{1\leq i\leq n}$ of $n$ observations from the distribution of $(X,Y)$ where $X$ and $Y$ are independent and dependent random variables, respectively. In this section and in Section \ref{sec: theory under high level conditions}, I assume that $X$ is a scalar and there are no additional covariates $Z$. The case where additional covariates $Z$ are present is considered in Section \ref{sec: multivariate models}.

Let $Q(\cdot,\cdot):\RR\times\RR\rightarrow\RR$ be a weighting function satisfying $Q(x_1,x_2)=Q(x_2,x_1)$ and $Q(x_1,x_2)\geq 0$ for all $x_1,x_2\in\RR$, and let
$$
b=b(\{X_i,Y_i\})=(1/2)\sum_{1\leq i,j\leq n}(Y_i-Y_j)\sign(X_j-X_i)Q(X_i,X_j)
$$
be a test function. Since $Q(X_i,X_j)\geq 0$ and $\Ep[Y_i|X_i]=f(X_i)$, it is easy to see that under $\MH_0$, that is, when the function $f(\cdot)$ is non-decreasing, $\Ep[b]\leq 0$. On the other hand, if $\MH_0$ is violated and there exist $x_1$ and $x_2$ on the support of $X$ such that $x_1<x_2$ but $f(x_1)>f(x_2)$, then there exists a function $Q(\cdot,\cdot)$ such that $\Ep[b]>0$ if $f(\cdot)$ is smooth. Therefore, $b$ can be used to form a test statistic if I can find an appropriate function $Q(\cdot,\cdot)$. For this purpose, I will use the adaptive testing approach developed in statistics literature. Even though this approach has attractive features, it is almost never used in econometrics. A notable exception is \cite{Horowitz2001}, who used it for specification testing.

The idea behind the adaptive testing approach is to choose $Q(\cdot,\cdot)$ from a large set of potentially useful weighting functions that maximizes the studentized version of $b$.
Formally, let $\MS_n$ be some general set that depends on $n$ and is (implicitly) allowed to depend on $\{X_i\}$, and for $s\in\MS_n$, let $Q(\cdot,\cdot,s):\RR\times\RR\rightarrow\RR$ be some function satisfying $Q(x_1,x_2,s)=Q(x_2,x_1,s)$ and $Q(x_1,x_2,s)\geq 0$ for all $x_1,x_2\in\RR$. The functions $Q(\cdot,\cdot,s)$ are also (implicitly) allowed to depend on $\{X_i\}$.
In addition, let
\begin{equation}\label{eq: test function definition}
b(s)=b(\{X_i,Y_i\},s)=(1/2)\sum_{1\leq i,j\leq n}(Y_i-Y_j)\sign(X_j-X_i)Q(X_i,X_j,s)
\end{equation}
be a test function. Conditional on $\{X_i\}$, the variance of $b(s)$ is given by
\begin{equation}\label{eq: variance definition}
V(s)=V(\{X_i\},\{\sigma_i\},s)=\sum_{1\leq i\leq n}\sigma_i^2\(\sum_{1\leq j\leq n}\sign(X_j-X_i)Q(X_i,X_j,s)\)^2
\end{equation}
where $\sigma_i=(\Ep[\eps_i^2|X_i])^{1/2}$ and $\eps_i=Y_i-f(X_i)$. In general, $\sigma_i$'s are unknown, and have to be estimated from the data. Let $\hat{\sigma}_i$ denote some (not necessarily consistent) estimator of $\sigma_i$. Available estimators are discussed later in this section. Then the estimated conditional variance of $b(s)$ is
\begin{equation}\label{eq: variance estimated definition}
\hat{V}(s)=V(\{X_i\},\{\hat{\sigma}_i\},s)=\sum_{1\leq i\leq n}\hat{\sigma}_i^2\(\sum_{1\leq j\leq n}\sign(X_j-X_i)Q(X_i,X_j,s)\)^2.
\end{equation}
The general form of the test statistic that I consider in this paper is
\begin{equation}\label{eq: test statistic definition}
T=T(\{X_i,Y_i\},\{\hat{\sigma}_i\},\MS_n)=\max_{s\in\MS_n}\frac{b(\{X_i,Y_i\},s)}{\sqrt{\hat{V}(\{X_i\},\{\hat{\sigma}_i\},s)}}.
\end{equation}
Large values of $T$ indicate that the null hypothesis is violated. Later in this section, I will provide methods for estimating quantiles of $T$ under $\MH_0$ and for choosing a critical value for the test based on the statistic $T$.

The set $\MS_n$ determines adaptivity properties of the test, that is the ability of the test to detect many different deviations from $\MH_0$. Indeed, each weighting function $Q(\cdot,\cdot,s)$ is useful for detecting some deviation, and so the larger is the set of weighting functions $\MS_n$, the larger is the number of different deviations that can be detected, and the higher is adaptivity of the test. In this paper, I allow for \textit{exponentially} large (in the sample size $n$) sets $\MS_n$. This implies that the researcher can choose a huge set of weighting functions, which allows her to detect large set of different deviations from $\MH_0$. The downside of the adaptivity, however, is that expanding the set $\MS_n$ increases the critical value, and thus decreases the power of the test against those alternatives that can be detected by weighting functions already included in $\MS_n$. Fortunately, in many cases the loss of power is relatively small. In particular, it follows from Lemma \ref{lem: maximal inequality} and Borell's inequality (see Proposition A.2.1 in \cite{VaartWellner1996}) that the critical values for test developed below are bounded from above by a slowly growing $C(\log p)^{1/2}$ for some $C>0$ where $p=|\MS_n|$, the number of elements in the set $\MS_n$.

\subsection{Typical Weighting Functions}\label{sub: typical weighting functions}
Let me now describe typical weighting functions. Consider some compactly supported kernel function $K:\RR\rightarrow\RR$ satisfying $K(x)\geq 0$ for all $x\in \RR$. For convenience, I will assume that the support of $K$ is $[-1,1]$.
In addition, let $s=(x,h)$ where $x$ is a location point and $h$ is a bandwidth value (smoothing parameter). Finally, define
\begin{equation}\label{eq: weighting function}
Q(x_1,x_2,(x,h))=|x_1-x_2|^kK\left(\frac{x_1-x}{h}\right)K\left(\frac{x_2-x}{h}\right)
\end{equation}
for some $k\geq 0$. I refer to this $Q$ as a \textit{kernel weighting function}.\footnote{It is possible to extend the definition of kernel weighting functions given in (\ref{eq: weighting function}). Specifically, the term $|x_1-x_2|^k$ in the definition can be replaced by general function $\bar{K}(x_1,x_2)$ satisfying $\bar{K}(x_1,x_2)\geq 0$ for all $x_1$ and $x_2$. I thank Joris Pinkse for this observation.}

Assume that a test is based on kernel weighting functions and $\MS_n$ consists of pairs $s=(x,h)$ with many different values of $x$ and $h$. To explain why this test has good adaptivity properties, consider figure 1 that plots two regression functions. Both $f_1(\cdot)$ and $f_2(\cdot)$ violate $\MH_0$ but locations where $\MH_0$ is violated are different. In particular, $f_1(\cdot)$ violates $\MH_0$ on the interval $[x_1,x_2]$ and $f_2(\cdot)$ violates $\MH_0$ on the interval $[x_3,x_4]$. In addition, $f_1(\cdot)$ is relatively less smooth than $f_2(\cdot)$, and $[x_1,x_2]$ is shorter than $[x_3,x_4]$. To have good power against $f_1(\cdot)$, $\MS_n$ should contain a pair $(x,h)$ such that $[x-h,x+h]\subset[x_1,x_2]$. Indeed, if $[x-h,x+h]$ is not contained in $[x_1,x_2]$, then positive and negative values of the summand of $b$ will cancel out yielding a low value of $b$. In particular, it should be the case that $x\in[x_1,x_2]$. Similarly, to have good power against $f_2(\cdot)$, $\MS_n$ should contain a pair $(x,h)$ such that $x\in[x_3,x_4]$. Therefore, using many different values of $x$ yields a test that adapts to the location of the deviation from $\MH_0$. This is spatial adaptivity. Further, note that larger values of $h$ yield higher signal-to-noise ratio. So, given that $[x_3,x_4]$ is longer than $[x_1,x_2]$, the optimal pair $(x,h)$ to test against $f_2(\cdot)$ has larger value of $h$ than that to test against $f_1(\cdot)$. Therefore, using many different values of $h$ results in adaptivity with respect to smoothness of the function, which, in turn, determines how fast its first derivative is varying and how long the interval of non-monotonicity is.

If no ex ante information is available, I recommend using kernel weighting functions with $\MS_n=\{(x,h):x\in\{X_1,...,X_n\},h\in H_n\}$ where $H_n=\{h=h_{\max}u^l:h\geq h_{\min},l=0,1,2,...\}$ and $h_{\max}=\max_{1\leq i,j\leq n}|X_i-X_j|/2$. I also recommend setting $u=0.5$, $h_{\min}=0.4h_{\max}(\log n/n)^{1/3}$, and $k=0$ or $1$. I refer to this $\MS_n$ as a \textit{basic set} of weighting functions. This choice of parameters is consistent with the theory presented in this paper and has worked well in simulations. The basic set of weighting functions yields a rate optimal and adaptive test. The value of $h_{\min}$ is selected so that the test function $b(s)$ for any given $s$ uses no less than approximately 15 observations when $n=100$ and $X$ is distributed uniformly on some interval.

If some ex ante information is available, the general framework considered here gives the researcher a lot of flexibility to incorporate this information. In particular, if the researcher expects that the function $f(\cdot)$ is rather smooth, then the researcher can restrict the set $\MS_n$ by considering only pairs $(x,h)$ with large values of $h$ since in this case deviations from $\MH_0$, if present, are more likely to happen on long intervals. Moreover, if the smoothness of the function $f(\cdot)$ is known, one can find an optimal value of the smoothing parameter $\tilde{h}=\tilde{h}_n$ corresponding to this level of smoothness, and then consider kernel weighting functions with this particular choice of the bandwidth value, that is $\MS_n=\{(x,h): x\in\{X_1,...,X_n\},h=\tilde{h}\}$. Further, if non-monotonicity is expected at one particular point $\tilde{x}$, one can consider kernel weighting functions with $\MS_n=\{(x,h):x=\tilde{x},h=\tilde{h}\}$ or $\MS_n=\{(x,h):x=\tilde{x},h\in H_n\}$ depending on whether smoothness of $f(\cdot)$ is known or not. More broadly, if non-monotonicity is expected on some interval $\mathcal{X}$, one can use kernel weighting functions with $\MS_n=\{(x,h):x\in\{X_1,...,X_n\}\cap\mathcal{X},h\in\tilde{h}\}$ or $\MS_n=\{(x,h):x\in\{X_1,...,X_n\}\cap\mathcal{X},h\in H_n\}$ again depending on whether smoothness of $f(\cdot)$ is known or not. Note that all these modifications will increase the power of the test because smaller sets $\MS_n$ yield lower critical values.

\begin{figure}\label{fig: two deviations}
\caption{Regression Functions Illustrating Different Deviations from $\MH_0$}
\includegraphics[scale=0.5]{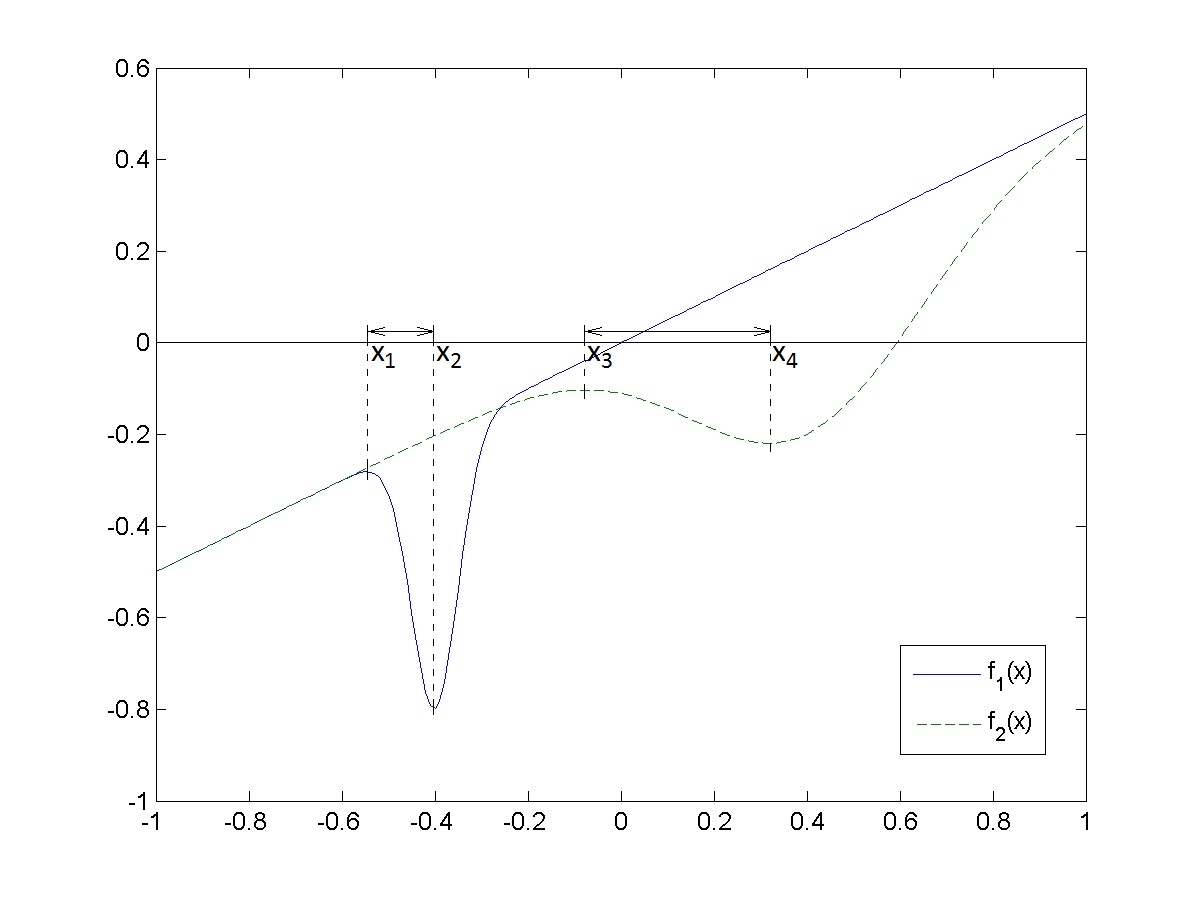}
\end{figure}

Another interesting choice of the weighting functions is
$$
Q(x_1,x_2,s)=\sum_{1\leq r\leq m}|x_1-x_2|^kK\left(\frac{x_1-x^r}{h}\right)K\left(\frac{x_2-x^r}{h}\right)
$$
where $s=(x^1,...,x^m,h)$. These weighting functions are useful if the researcher expects multiple deviations from $\MH_0$.

\subsection{Comparison with Other Known Tests}
I will now show that the general framework described above includes the Hall and Heckman's (HH) test statistic and a slightly modified version of the Ghosal, Sen, and van der Vaart's (GSV) test statistic as special cases that correspond to different values of $k$ in the definition of kernel weighting functions. 

GSV use the following test function:
$$
b(s)=(1/2)\sum_{1\leq i,j\leq n}\sign(Y_i-Y_j)\sign(X_j-X_i)K\left(\frac{X_i-x}{h}\right)K\left(\frac{X_j-x}{h}\right),
$$
whereas setting $k=0$ in equation (\ref{eq: weighting function}) yields
\begin{equation}\label{eq: GSV modification}
b(s)=(1/2)\sum_{1\leq i,j\leq n}(Y_i-Y_j)\sign(X_j-X_i)\left(\frac{X_i-x}{h}\right)K\left(\frac{X_j-x}{h}\right),
\end{equation}
and so the only difference is that I include the term $(Y_i-Y_j)$ whereas they use $\sign(Y_i-Y_j)$. It will be shown in the next section that my test is consistent. On the other hand, I claim that GSV test is not consistent under the presence of conditional heteroscedasticity. Indeed, assume that $f(X_i)=-X_i$, and that $\eps_i$ is $-2X_i$ or $2X_i$ with equal probabilities. Then $(Y_i-Y_j)(X_j-X_i)>0$ if and only if $(\eps_i-\eps_j)(X_j-X_i)>0$, and so the probability of rejecting $\MH_0$ for the GSV test is numerically equal to that in the model with $f(X_i)=0$ for $i=\overline{1,n}$. But the latter probability does not exceed the size of the test. This implies that the GSV test is not consistent since it maintains the required size asymptotically. Moreover, they consider a unique non-stochastic value of $h$, which means that the GSV test is nonadaptive with respect to the smoothness of the function $f(\cdot)$.

Let me now consider the HH test. The idea of this test is to make use of local linear estimates of the slope of the function $f(\cdot)$. Using well-known formulas for the OLS regression, it is easy to show that the slope estimate of the function $f(\cdot)$ given the data $\{X_i,Y_i\}_{i=s_1+1}^{s_2}$ with $s_1<s_2$ where $\{X_i\}_{i=1}^n$ is an increasing sequence is given by
\begin{equation}\label{eq: HH test statistic original}
b(s)=\frac{\sum_{s_1< i\leq s_2}Y_i\sum_{s_1< j\leq s_2}(X_i-X_j)}{(s_2-s_1)\sum_{s_1< i\leq s_2}X_i^2-(\sum_{s_1< i\leq s_2}X_i)^2},
\end{equation}
where $s=(s_1,s_2)$. Note that the denominator of (\ref{eq: HH test statistic original}) depends only on $X_i$'s, and so it disappears after studentization. In addition, simple rearrangements show that the numerator in (\ref{eq: HH test statistic original}) is up to the sign is equal to
\begin{equation}\label{eq: proportial term}
(1/2)\sum_{1\leq i,j\leq n}(Y_i-Y_j)(X_j-X_i)1\{x-h\leq X_i\leq x+h\}1\{x-h\leq X_j\leq x+h\}
\end{equation}
for some $x$ and $h$. On the other hand, setting $k=1$ in equation (\ref{eq: weighting function}) yields
\begin{equation}\label{eq: HH test statistic}
b(s)=(1/2)\sum_{1\leq i,j\leq n}(Y_i-Y_j)(X_j-X_i)K\left(\frac{X_i-x}{h}\right)K\left(\frac{X_j-x}{h}\right).
\end{equation}
Noting that expression in (\ref{eq: proportial term}) is proportional to that on the right hand side in (\ref{eq: HH test statistic}) with $K(\cdot)=1\{[-1,+1]\}(\cdot)$ implies that the HH test statistic is a special case of those studied in this paper.

\subsection{Estimating $\sigma_{i}$}\label{sub: estimating sigma}

In practice, $\sigma_{i}$'s are usually unknown, and, hence, have to
be estimated from the data. Let $\hat{\sigma}_{i}$ denote some
estimator of $\sigma_{i}$. I provide results for two types of
estimators. The first type of estimators is easier to implement but the second worked better in simulations.

First, $\sigma_{i}$ can be estimated by the residual $\hat{\eps}_{i}$. More precisely, let $\hat{f}(\cdot)$ be some
uniformly consistent estimator of $f(\cdot)$ with a polynomial rate of consistency
in probability, i.e. $\hat{f}(X_{i})-f(X_{i})=o_{p}(n^{-\kappa_1})$ uniformly over $i=\overline{1,n}$
for some $\kappa_1>0$, and let $\hat{\sigma}_{i}=\hat{\eps}_{i}$
where $\hat{\eps}_{i}=Y_{i}-\hat{f}(X_{i})$. Note that $\hat{\sigma}_i$ can be negative. Clearly, $\hat{\sigma}_{i}$
is not a consistent estimator of $\sigma_{i}$. Nevertheless, as I will show in Section \ref{sec: theory under high level conditions}
that this estimator leads to valid inference. Intuitively, it works because
the test statistic contains the weighted average sum of $\sigma_{i}^{2}$ over
$i=\overline{1,n}$, and the estimation error averages out. To obtain
a uniformly consistent estimator $\hat{f}(\cdot)$ of $f(\cdot)$, one can use a
series method (see \cite{Newey97}, theorem 1) or local polynomial
method (see \cite{Tsybakov_book}, theorem 1.8). If one prefers
kernel methods, it is important to use generalized kernels in order
to deal with boundary effects when higher order kernels are used; see, for example, \cite{Muller91}. Alternatively, one
can choose $\MS_n$ so that boundary points are excluded from the test statistic. In addition, if the researcher decides to impose some parametric structure on the set of potentially possible heteroscedasticity functions, then parametric methods like OLS will typically give uniform consistency with $\kappa_1$ arbitrarily close to $1/2$.

The second way of estimating $\sigma_i$ is to use a parametric or nonparametric estimator $\hat{\sigma}_i$ satisfying $\hat{\sigma}_{i}-\sigma_{i}=o_{p}(n^{-\kappa_2})$ uniformly over $i=\overline{1,n}$
for some $\kappa_2>0$. Many estimators of $\sigma_{i}$ satisfy
this condition. Assume that the observations $\{X_i,Y_i\}_{i=1}^n$ are arranged so that $X_i\leq X_j$ whenever $i\leq j$. Then the estimator of \cite{Rice1984}, given by
\begin{equation}
\hat{\sigma}=\left(\frac{1}{2n}\sum_{i=1}^{n-1}(Y_{i+1}-Y_{i})^{2}\right)^{1/2},\label{eq: Rice formula}
\end{equation}
is $\sqrt{n}$-consistent if $\sigma_i=\sigma$ for all $i=\overline{1,n}$ and $f(\cdot)$ is piecewise Lipschitz-continuous. 

The Rice estimator can be easily modified to allow for conditional heteroscedasticity. Choose a bandwidth value $b_{n}>0$.
For $i=\overline{1,n}$, let $J(i)=\{j=\overline{1,n}:|X_j-X_i|\leq b_{n}\}$.
Let
$|J(i)|$ denote the number of elements in $J(i)$. Then $\sigma_i$
can be estimated by
\begin{equation}\label{eq: local rice estimator}
\hat{\sigma}_i=\left(\frac{1}{2|J(i)|}\sum_{j\in J(i):j+1\in J(i)}(Y_{j+1}-Y_{j})^2\right)^{1/2}.
\end{equation}
I refer to (\ref{eq: local rice estimator}) as a local version of Rice's estimator. An advantage of this estimator is that it is adaptive
with respect to the smoothness of the function $f(\cdot)$. Proposition \ref{lem: consistency of rice estimator} in Appendix \ref{sec: verification of high level conditions} provides conditions that are sufficient for uniform consistency of this estimator with a polynomial rate. The key condition there is that $|\sigma_{j+1}-\sigma_j|\leq C|X_{j+1}-X_j|$ for some $C>0$ and all $j=\overline{1,n-1}$.
The intuition for consistency is as follows.
Note that $X_{j+1}$ is close to $X_{j}$.
So, if the function $f$ is continuous, then
$$
Y_{j+1}-Y_{j}=f(X_{j+1})-f(X_{j})+\varepsilon_{j+1}-\varepsilon_{j}\approx\varepsilon_{j+1}-\varepsilon_{j},
$$
so that
$$
\Ep[(Y_{j+1}-Y_{j})^2|\{X_i\}]\approx\sigma_{j+1}^2+\sigma_{j}^2
$$
since $\varepsilon_{j+1}$ is independent of $\varepsilon_{j}$.
Further, if $b_{n}$ is sufficiently small, then $\sigma_{j+1}^2+\sigma_{j}^2\approx2\sigma_{i}^2$
since $|X_{j+1}-X_{i}|\leq b_{n}$ and $|X_{j}-X_{i}|\leq b_{n}$, and so $\hat{\sigma}_i^2$ is close to $\sigma_i^2$. Other available estimators are presented, for example, in \cite{Muller1987}, \cite{FanYao1998}, \cite{Horowitz2001}, \cite{HardleTsybakov2007}, and \cite{Cai2008}.

\subsection{Simulating the Critical Value}

In this subsection, I provide three different methods for estimating quantiles of the null distribution of the test statistic $T$. These are plug-in, one-step, and step-down methods. All of these methods are based on the procedure known as the Wild bootstrap. The Wild bootstrap was introduced in \cite{Wu1986} and used, among many others, by \cite{Liu88}, \cite{Mammen93}, \cite{HardleMammen1993}, \cite{Horowitz2001}, and \cite{Chetverikov2012}. See also \cite{ChernozhukovChetverikov12}. The three methods are arranged in terms of increasing power and computational complexity. The validity of all three methods is established in theorem \ref{thm: size}.
 Recall that $\{\epsilon_i\}$ denotes a sequence of independent $N(0,1)$ random variables that are independent of the data.

\subsection*{Plug-in Approach}
Suppose that we want to obtain a test of level $\alpha$.
The plug-in approach is based on two observations. First, under $\MH_0$,
\begin{align}
b(s)&=(1/2)\sum_{1\leq i,j\leq n}(f(X_i)-f(X_j)+\eps_i-\eps_j)\sign(X_j-X_i)Q(X_i,X_j,s)\\
&\leq
(1/2)\sum_{1\leq i,j\leq n}(\eps_i-\eps_j)\sign(X_j-X_i)Q(X_i,X_j,s)
\end{align}
since $Q(X_i,X_j,s)\geq 0$ and $f(X_i)\geq f(X_j)$ whenever $X_i\geq X_j$ under $\MH_0$, and so the $(1-\alpha)$ quantile of $T$ is bounded from above by the $(1-\alpha)$ quantile of $T$ in the model with $f(x)=0$ for all $x\in\RR$, which is the least favorable model under $\MH_0$. Second, it will be shown that the distribution of $T$ asymptotically
depends on the distribution of noise $\{\eps_i\}$ only through
$\{\sigma_{i}^{2}\}$.
These two observations suggest that the critical value for the test can be obtained by simulating the conditional $(1-\alpha)$ quantile of $T^{\star}=T(\{X_i,Y_i{^\star}\},\{\hat{\sigma}_i\},\MS_n)$ given $\{X_i\}$, $\{\hat{\sigma}_i\}$, and $\MS_n$ where $Y_i^{\star}=\hat{\sigma}_i\epsilon_i$ for $i=\overline{1,n}$. This is called the plug-in critical value $c_{1-\alpha}^{PI}$. See Section \ref{sec: algorithms} of the Appendix for detailed step-by-step instructions.

\subsection*{One-Step Approach}
The test with the plug-in critical value is computationally rather simple. It has, however, poor power properties. Indeed, the distribution of $T$ in general depends on $f(\cdot)$ but the plug-in approach is based on the least favorable regression function $f(x)=0$ for all $x\in\mathbb{R}$, and so it is too conservative when $f(\cdot)$ is strictly increasing. More formally, suppose for example that kernel weighting functions are used, and that $f(\cdot)$ is strictly increasing in $h$-neighborhood
of $x_1$ but is constant in $h$-neighborhood of $x_2$. Let $s_1=s(x_1,h)$ and $s_2=s(x_2,h)$. Then $b(s_1)/(\hat{V}(s_1))^{1/2}$ is no greater than $b(s_2)/(\hat{V}(s_2))^{1/2}$
with probability approaching one. On the other hand, $b(s_1)/(\hat{V}(s_1))^{1/2}$
is greater than $b(s_2)/(\hat{V}(s_2))^{1/2}$ with nontrivial probability in the
model with $f(x)=0$ for all $x\in\RR$, which is used to obtain $c_{1-\alpha}^{PI}$. Therefore, $c_{1-\alpha}^{PI}$ overestimates
the corresponding quantile of $T$. The natural idea to overcome the conservativeness of the plug-in approach is to simulate a critical value using not all elements of $\MS_n$ but only those that are relevant for the given sample.
In this paper, I develop two selection procedures that are used to decide what elements of $\MS_n$
should be used in the simulation. The main difficulty here
is to make sure that the selection procedures do not distort the size
of the test. The simpler of these two procedures is the one-step approach.

Let $\{\gamma_{n}\}$ be a sequence of positive numbers converging
to zero, and let $c_{1-\gamma_n}^{PI}$ be the $(1-\gamma_n)$ plug-in critical value. In addition, denote 
$$
\MS_n^{OS}=\MS_n^{OS}(\{X_i,Y_i\},\{\hat{\sigma}_i\},\MS_n)=\{s\in\MS_n:b(s)/(\hat{V}(s))^{1/2}>-2c_{1-\gamma_n}^{PI}\}.
$$
Then the one-step critical value $c_{1-\alpha}^{OS}$ is the conditional $(1-\alpha)$ quantile of the simulated statistic $T^{\star}=T(\{X_i,Y_i{^\star}\},\{\hat{\sigma}_i\},\MS_n^{OS})$ given $\{X_i\}$, $\{\hat{\sigma}_i\}$, and $\MS_n^{OS}$ where $Y_i^{\star}=\hat{\sigma}_i\epsilon_i$ for $i=\overline{1,n}$.\footnote{If $\MS_n^{OS}$ turns out to be empty, assume that $\MS_n^{OS}$ consists of one randomly chosen element of $\MS_n$.} 
Intuitively, the one-step critical value works because the weighting functions corresponding to elements of the set $\MS_n\backslash\MS_n^{OS}$ have an asymptotically negligible influence on the distribution of $T$ under $\MH_0$. Indeed, it will be shown that the probability that at least one element $s$ of $\MS_n$ such that
\begin{equation}\label{eq: threshold value}
(1/2)\sum_{1\leq i,j\leq n}(f(X_i)-f(X_j))\sign(X_j-X_i)Q(X_i,X_j,s)/(\hat{V}(s))^{1/2}> -c_{1-\gamma_n}^{PI}
\end{equation}
belongs to the set $\MS_n\backslash\MS_n^{OS}$ is at most $\gamma_n+o(1)$.
On the other hand, the probability that at least one element $s$ of $\MS_n$ such that inequality (\ref{eq: threshold value}) does not hold for this element gives $b(s)/(\hat{V}(s))^{1/2}> 0$ is again at most $\gamma_n+o(1)$.
Since $\gamma_n$ converges to zero, this suggests that the critical value can be simulated using only elements of $\MS_n^{OS}$. In practice, one can set $\gamma_n$ as a small fraction of $\alpha$. For example, the Monte Carlo simulations presented in this paper use $\gamma_n=0.01$ with $\alpha=0.1$.\footnote{More formally, it is shown in the proof of Theorem \ref{thm: size} that the probability of rejecting $\MH_0$ under $\MH_0$ in large sample is bounded from above by $\alpha+2\gamma_n$. This suggests that if the researcher does not agree to tolerate small size distortions, she can use the test with level $\tilde{\alpha}=\alpha-2\gamma_n$ instead. On the other hand, I note that $\alpha+2\gamma_n$ is only an \textit{upper} bound on the probability of rejecting $\MH_0$, and in many cases the true probability of rejecting $\MH_0$ is smaller than $\alpha+2\gamma_n$.}

\subsection*{Step-down Approach}
The one-step approach, as the name suggests, uses only one step to cut out those elements of $\MS_n$ that have negligible influence on the distribution of $T$. It turns out that this step can be iterated using the step-down procedure and yielding second-order improvements in the power. 
The step-down procedures were developed in the literature on multiple hypothesis testing; see, in particular, \cite{Holm1979}, \cite{RomanoWolf2005}, \cite{RomanoWolf2005ECMA}, and \cite{RomanoShaikh2010}. See also \cite{LehmannRomano2005} for a textbook introduction. The use of step-down method in this paper, however, is rather different. 

To explain the step-down approach, let me define the sequences $(c_{1-\gamma_n}^{l})_{l=1}^{\infty}$ and $(\MS_n^l)_{l=1}^{\infty}$.  Set $c_{1-\gamma_n}^1=c_{1-\gamma_n}^{OS}$ and $\MS_n^1=\MS_n^{OS}$. Then for $l>1$, let $c_{1-\gamma_n}^l$ be the conditional $(1-\gamma_n)$ quantile of $T^{\star}=T(\{X_i,Y_i^{\star}\},\{\hat{\sigma}_i\},\MS_n^l)$ given $\{\hat{\sigma}_i\}$ and $\MS_n^l$ where $Y_i^{\star}=\hat{\sigma}_i\epsilon_i$ for $i=\overline{1,n}$ and
$$
\MS_n^l=\MS_n^l(\{X_i,Y_i\},\{\hat{\sigma}_i\},\MS_n)=\{s\in\MS_n:b(s)/(\hat{V}(s))^{1/2}>-c_{1-\gamma_n}^{PI}-c_{1-\gamma_n}^{l-1}\}.
$$
It is easy to see that $(c_{1-\gamma_n}^l)_{l=1}^{\infty}$ is a decreasing sequence, and so $\MS_n^l\supseteq\MS_n^{l+1}$ for all $l\geq 1$. Since $\MS_n^1$ is a finite set, $\MS_n^{l(0)}=\MS_n^{l(0)+1}$ for some $l(0)\geq 1$ and $\MS_n^l=\MS_n^{l+1}$ for all $l\geq l(0)$. Let $\MS_n^{SD}=\MS_n^{l(0)}$. Then the step-down critical value $c_{1-\alpha}^{SD}$ is the conditional $(1-\alpha)$ quantile of $T^{\star}=T(\{X_i,Y_i^{\star}\},\{\hat{\sigma}_i\},\MS_n^{SD})$ given $\{X_i\}$, $\{\hat{\sigma}_i\}$, and $\MS_n^{SD}$ where $Y_i^{\star}=\hat{\sigma}_i\epsilon_i$ for $i=\overline{1,n}$. 

Note that $\MS_n^{SD}\subset\MS_n^{OS}\subset\MS_n$, and so $c_\eta^{SD}\leq c_\eta^{OS}\leq c_\eta^{PI}$ for any $\eta\in(0,1)$. This explains that the three methods for simulating the critical values are arranged in terms of increasing power.

\section{Theory under High-Level Conditions}\label{sec: theory under high level conditions}

This section describes the high-level assumptions used in the paper and presents
the main results under these assumptions.

Let $c_1$, $C_1$, $\kappa_1$, $\kappa_2$, and $\kappa_3$ be strictly positive constants. The size properties of the test will be obtained under the following assumptions.

\begin{assumption}\label{as: disturbances}
$\Ep[|\eps_i|^4|X_i]\leq C_1$ and $\sigma_i\geq c_1$ for all $i=\overline{1,n}$.
\end{assumption}
\noindent
This is a mild assumption on the moments of disturbances. The condition $\sigma_i\geq c_1$ for all $i=\overline{1,n}$ precludes the existence of super-efficient estimators.

Recall that the results in this paper are obtained for two types of estimators of $\sigma_i$. When $\hat{\sigma}_i=\hat{\epsilon}_i=Y_i-\hat{f}(X_i)$ for some estimator $\hat{f}(\cdot)$ of $f(\cdot)$, I will assume

\begin{assumption}\label{as: function estimation}
(i) $\hat{\sigma}_i=Y_i-\hat{f}(X_i)$ for all $i=\overline{1,n}$ and (ii) $\hat{f}(X_i)-f(X_i)=o_p(n^{-\kappa_1})$ uniformly over $i=\overline{1,n}$.
\end{assumption}
\noindent
This assumption is satisfied for many parametric and nonparametric estimators of $f(\cdot)$; see, in particular, Subsection \ref{sub: estimating sigma}. When $\hat{\sigma}_i$ is some consistent estimator of $\sigma_i$, I will assume

\begin{assumption}\label{as: sigma}
$\hat{\sigma}_i-\sigma_i=o_p(n^{-\kappa_2})$ uniformly over $i=\overline{1,n}$.
\end{assumption}
\noindent
See Subsection \ref{sub: estimating sigma} for different available estimators. See also Proposition \ref{lem: consistency of rice estimator} in Appendix \ref{sec: verification of high level conditions} where Assumption A\ref{as: sigma} is proven for the local version of Rice's estimator.

\begin{assumption}\label{as: V}
$(\hat{V}(s)/V(s))^{1/2}-1=o_p(n^{-\kappa_3})$ and $(V(s)/\hat{V}(s))^{1/2}-1=o_p(n^{-\kappa_3})$ uniformly over $s\in\MS_n$.
\end{assumption}
\noindent
This is a high-level assumption that is verified for kernel weighting functions under primitive conditions in Appendix \ref{sec: verification of high level conditions} (Proposition \ref{lem: kernel function}).

Let
\begin{equation}\label{eq: sensitivity parameter}
A_n=\max_{s\in\MS_n}\max_{1\leq i\leq n}\left|\sum_{1\leq j\leq n}\sign(X_j-X_i)Q(X_i,X_j,s)/(V(s))^{1/2}\right|.
\end{equation}
I refer to $A_n$ as a sensitivity parameter. It provides an upper bound on how much any test function depends on a particular observation. Intuitively, approximation of the distribution of the test statistic is possible only if $A_n$ is sufficiently small.
\begin{assumption}\label{as: growth condition}
(i) $nA_n^4(\log (pn))^{7}=o_p(1)$ where $p=|\MS_n|$, the number of elements in the set $\MS_n$; (ii) if A\ref{as: function estimation} holds, then $\log p/n^{(1/4)\wedge\kappa_1\wedge\kappa_3}=o_p(1)$, and if A\ref{as: sigma} holds, then $\log p/n^{\kappa_2\wedge\kappa_3}=o_p(1)$.
\end{assumption}
\noindent
This is a key growth assumption that restricts the choice of the weighting functions and, hence, the set $\MS_n$. Note that this condition includes $p$ only through $\log p$, and so it allows an exponentially large (in the sample size $n$) number of weighting functions. Proposition \ref{lem: kernel function} in Appendix \ref{sec: verification of high level conditions} provides an upper bound on $A_n$ for kernel weighting functions, allowing me to verify this assumption under primitive conditions for the basic set of weighting functions.

Let $\mathcal{M}$ be a class of models given by equation (\ref{eq: Model}), regression function $f(\cdot)$, joint distribution of $X$ and $\varepsilon$ such that $\Ep[\varepsilon|X]=0$ almost surely, weighting functions $Q(\cdot,\cdot,s)$ for $s\in\MS_n$, and estimators $\{\hat{\sigma}_i\}$ such that uniformly over this class, (i) Assumptions A\ref{as: disturbances}, A\ref{as: V}, and A\ref{as: growth condition} are satisfied, and (ii) either Assumption A\ref{as: function estimation} or A\ref{as: sigma} is satisfied.\footnote{Assumptions A\ref{as: function estimation}, A\ref{as: sigma}, A\ref{as: V}, and A\ref{as: growth condition} contain statements of the form $Z=o_p(n^{-\kappa})$ for some random variable $Z$ and $\kappa>0$. I say that these assumptions hold uniformly over a class of models if for any $C>0$, $\Pr(|Z|>Cn^{-\kappa})=o(1)$ uniformly over this class. Note that this notion of uniformity is weaker than uniform convergence in probability; in particular, it applies to random variables defined on different probability spaces.} For $M\in\mathcal{M}$, let $\Pr_M(\cdot)$ denote the probability measure generated by the model $M$.
\begin{theorem}[Size properties of the test]\label{thm: size}
Let $P=PI$, $OS$, or $SD$. Let $\mathcal{M}_0$ denote the set of all models $M\in\mathcal{M}$ satisfying $\MH_0$. Then
$$
\inf_{M\in\mathcal{M}_0}\Pr_M(T\leq c_{1-\alpha}^P)\geq 1-\alpha+o(1)\text{ as }n\rightarrow \infty.
$$
In addition, let $\mathcal{M}_{00}$ denote the set of all models $M\in\mathcal{M}_0$ such that $f(x)=C$ for some constant $C$ and all $x\in\RR$. Then
$$
\sup_{M\in\mathcal{M}_{00}}\Pr(T\leq c_{1-\alpha}^P)=1-\alpha+o(1)\text{ as }n\rightarrow \infty.
$$
\end{theorem}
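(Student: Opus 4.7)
The heart of the argument is a high-dimensional Gaussian approximation, uniform over $s\in\MS_n$, for an appropriate centering of the test statistic, combined with an analysis of the three selection rules.

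\textbf{Reduction and Gaussian approximation.} Decompose $b(s)=b_f(s)+b_\eps(s)$ where
\[
b_f(s)=(1/2)\sum_{1\leq i,j\leq n}(f(X_i)-f(X_j))\sign(X_j-X_i)Q(X_i,X_j,s)
\]
and $b_\eps(s)$ is the analogous sum with $\eps_i-\eps_j$ replacing $f(X_i)-f(X_j)$. Under $\MH_0$, $b_f(s)\leq 0$ for every $s$, hence $T\leq\bar T:=\max_{s\in\MS_n}b_\eps(s)/\sqrt{\hat V(s)}$, with equality under $\mathcal{M}_{00}$. Writing $b_\eps(s)=\sum_i\eps_i u_i(s)$ with $u_i(s)=\sum_j\sign(X_j-X_i)Q(X_i,X_j,s)$, the variable $\bar T$ is a maximum over $s\in\MS_n$ of sums of conditionally independent mean-zero terms whose individual contributions are bounded by the sensitivity parameter $A_n$. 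The growth condition $nA_n^4(\log pn)^7=o_p(1)$ is exactly what is needed for a Chernozhukov--Chetverikov--Kato type high-dimensional CLT, applied conditionally on $\{X_i\}$, to couple $\max_s\sum_i\eps_i u_i(s)/\sqrt{V(s)}$ with the Gaussian maximum $G:=\max_s\sum_i\sigma_i\epsilon_i u_i(s)/\sqrt{V(s)}$ in Kolmogorov distance $o(1)$. Assumption A\ref{as: V} replaces $V(s)$ by $\hat V(s)$ uniformly at cost $o_p(n^{-\kappa_3})$, which by Gaussian anti-concentration (together with A\ref{as: growth condition}(ii)) becomes $o(1)$ in Kolmogorov distance. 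The same reasoning applied to the bootstrap statistic $T^\star=T(\{X_i,\hat{\sigma}_i\epsilon_i\},\{\hat{\sigma}_i\},\MS_n)$ shows that its conditional law given the data is Kolmogorov-close to $G$: under A\ref{as: sigma} this is immediate, and under A\ref{as: function estimation} it follows because $T^\star$ depends on $\hat{\sigma}_i^2$ only through $u_i(s)^2$-weighted averages, so the pointwise expansion $\hat{\sigma}_i^2=\eps_i^2+o_p(n^{-\kappa_1})$ combined with the concentration of weighted averages of $\eps_i^2$ around those of $\sigma_i^2$ at rate $n^{-1/4}$ suffices, both being absorbed by A\ref{as: growth condition}(ii). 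This settles the plug-in case: $c_{1-\alpha}^{PI}$ approximates the $(1-\alpha)$ quantile of $\bar T$ up to $o(1)$, so $T\leq\bar T$ yields the size claim, with equality under $\mathcal{M}_{00}$.

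\textbf{One-step and step-down.} Introduce the deterministic set $\MS_n^{\ast}=\{s\in\MS_n:b_f(s)/\sqrt{\hat V(s)}>-c_{1-\gamma_n}^{PI}\}$. By the Gaussian approximation applied to $\max_s|b_\eps(s)|/\sqrt{\hat V(s)}$, the event $E_n=\{\max_s|b_\eps(s)|/\sqrt{\hat V(s)}\leq c_{1-\gamma_n}^{PI}\}$ has probability at least $1-\gamma_n+o(1)$ uniformly in $\mathcal{M}_0$. On $E_n$: (a) every $s\in\MS_n^{\ast}$ satisfies $b(s)/\sqrt{\hat V(s)}>-2c_{1-\gamma_n}^{PI}$, so $\MS_n^{\ast}\subseteq\MS_n^{OS}$; and (b) every $s\notin\MS_n^{\ast}$ satisfies $b_f(s)\leq -c_{1-\gamma_n}^{PI}\sqrt{\hat V(s)}$ together with $b_\eps(s)\leq c_{1-\gamma_n}^{PI}\sqrt{\hat V(s)}$, giving $b(s)\leq 0$, so such $s$ cannot produce a positive $T$. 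Combining (a), (b), and $b\leq b_\eps$ (since $b_f\leq 0$),
\[
\max(T,0)\leq\max_{s\in\MS_n^{OS}}\max(b_\eps(s)/\sqrt{\hat V(s)},0)\quad\text{on }E_n,
\]
and the conditional $(1-\alpha)$ quantile of the right-hand side is within $o(1)$ of $c_{1-\alpha}^{OS}$ by the bootstrap coupling applied on the random set $\MS_n^{OS}$. Since $c_{1-\alpha}^{OS}>0$ with probability $1-o(1)$, this yields $\Pr_M(T>c_{1-\alpha}^{OS})\leq\alpha+\gamma_n+o(1)$, which as $\gamma_n\to 0$ is the required bound. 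The step-down case follows by induction on $l$: defining deterministic active sets $\MS_n^{\ast,l}=\{s:b_f(s)/\sqrt{\hat V(s)}>-c_{1-\gamma_n}^{l-1}\}$ (with $c_{1-\gamma_n}^{0}=c_{1-\gamma_n}^{PI}$), the same sandwich on $E_n$ gives $\MS_n^{\ast,l}\subseteq\MS_n^l$ for every $l$; passing to the terminal index $l(0)$ yields $\MS_n^{\ast,SD}\subseteq\MS_n^{SD}$ and the analogous size bound for $c_{1-\alpha}^{SD}$.

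\textbf{Non-conservativeness and main obstacle.} When $f$ is constant, $b_f\equiv 0$ so $\MS_n^{\ast}=\MS_n$, and on $E_n$ the three selection sets all equal $\MS_n$, so the three critical values agree with $c_{1-\alpha}^{PI}$ up to $o(1)$. The Gaussian coupling of $T$ with $G$ then becomes an equality rather than a one-sided domination, so the $(1-\alpha)$ quantile of $T$ matches that of $G$ up to $o(1)$, yielding exact asymptotic size $\alpha$. The principal technical obstacle is the uniform-in-$s$ Gaussian coupling over an exponentially large $\MS_n$ in the presence of data-dependent selected sets: the coupling itself follows from available high-dimensional CLT technology, but its transfer to the selection step requires the deterministic sandwich $\MS_n^{\ast}\subseteq\MS_n^{OS},\MS_n^{SD}$ on $E_n$, which decouples the random selection from the Gaussian approximation so that monotonicity-in-the-set arguments can be applied. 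A secondary delicate point, under A\ref{as: function estimation}, is that the residual estimator $\hat{\sigma}_i$ is not pointwise consistent, so the proof must exploit the averaging structure of the quadratic form rather than pointwise estimation error bounds.
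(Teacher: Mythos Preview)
Your overall architecture---CCK Gaussian approximation plus anti-concentration, a domination $T\leq\bar T$ under $\MH_0$, and a reference-set sandwich for the selection rules---matches the paper's strategy closely. The gap is in how you handle the selected set. You call $\MS_n^\ast=\{s:b_f(s)/\sqrt{\hat V(s)}>-c_{1-\gamma_n}^{PI}\}$ ``deterministic,'' but both $\hat V(s)$ and $c_{1-\gamma_n}^{PI}$ depend on $\{\hat\sigma_i\}$ and hence on $\{Y_i\}$; likewise $\MS_n^{OS}$ depends on the $\eps_i$'s through $b(s)$. Your key line ``the conditional $(1-\alpha)$ quantile of the right-hand side is within $o(1)$ of $c_{1-\alpha}^{OS}$ by the bootstrap coupling applied on the random set $\MS_n^{OS}$'' is where this bites: the Gaussian/bootstrap coupling for $\max_{s\in\MS}\eps(s)$ is proved conditionally on $\{X_i\}$ with $\MS$ a function of $\{X_i\}$ only, and it does not transfer to an index set that is correlated with the very $\eps_i$'s being summed.

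The paper closes this gap by defining its reference set $\MS_n^R=\{s:f(s)>-c_{1-\gamma_n-\psi_n}^{\MS_n,0}\}$ entirely from oracle quantities ($V$, the oracle quantile $c^{\cdot,0}$, and $f$), so that $\MS_n^R$ is $\{X_i\}$-measurable. It introduces a slack sequence $\psi_n\to 0$ to move between estimated and oracle quantiles (Lemmas on $c_\eta^{\MS}$ vs.\ $c_\eta^{\MS,0}$), then proves (i) $\Pr(\MS_n^R\subset\MS_n^{SD})\geq 1-\gamma_n+o(1)$ and (ii) $\Pr(\max_{s\notin\MS_n^R}(\hat f(s)+\hat\eps(s))\leq 0)\geq 1-\gamma_n+o(1)$. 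With those two facts the argument runs entirely on $\MS_n^R$, where the conditional coupling is legitimate, and monotonicity in the index set transfers the bound to $c_{1-\alpha}^P$. Your sketch can be repaired along exactly these lines: replace $\hat V$ by $V$ and $c_{1-\gamma_n}^{PI}$ by its oracle version in the definition of the reference set, and carry the $\psi_n$ slack through the chain rather than asserting a coupling on $\MS_n^{OS}$ directly.
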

\begin{remark}
(i) This theorem states that the Wild Bootstrap combined with the selection procedures developed in this paper yields valid critical values. Moreover, critical values are valid uniformly over the class of models $\mathcal{M}_0$. The second part of the theorem states that the test is nonconservative in the sense that its level converges to the nominal level $\alpha$.

\noindent
(ii) The proof technique used in this theorem is based on \textit{finite sample} approximations that are built on the results of \cite{ChernozhukovChetverikov12} and \cite{ChernozhukovKato2011}. In particular, the validity of the bootstrap is established \textit{without} refering to the asymptotic distribution of the test statistic. 

\noindent
(iii) The standard techniques from empirical process theory as presented, for example, in \cite{VaartWellner1996} can not be used to prove the results of Theorem \ref{thm: size}. The problem is that it is not possible to embed the process $\{b(s)/(V(s))^{1/2}:s\in\MS_n\}$ into asymptotically equicontinuous process since, for example, when the basic set of kernel weighting functions is used, random variables $b(x_1,h)/(V(x_1,h))^{1/2}$ and $b(x_2,h)/(V(x_2,h))^{1/2}$ for fixed $x_1<x_2$ become asymptotically independent as $h\rightarrow 0$.

\noindent
(iv) To better understand the importance of the finite sample approximations used in the proof of this theorem, suppose that one would like to use \textit{asymptotic} approximations based on the limit distribution of the test statistic $T$ instead. The difficulty with this approach would be to \textit{derive} the limit distribution of $T$. Indeed, note that the class of test statistics considered in this theorem is large and different limit distributions are possible. For example, if $\MS_n$ is a singleton with $Q(\cdot,\cdot,s)$ being a kernel weighting function, so that $s=(x,h)$, and $x$ being fixed and $h=h_n$ converging to zero with an appropriate rate (this statistic is useful if the researcher wants to test monotonicity in a neighborhood of a particular point $x$), then $T\Rightarrow N(0,1)$ where $\Rightarrow$ denotes weak convergence. On the other hand, if kernel weighting functions are used but $\MS_n=\{(x,h):x\in\{X_1,...,X_n\},h=h_n\}$ with $h=h_n$ converging to zero with an appropriate rate (this statistic is useful if the researcher wants to test monotonicity on the whole support of $X$ but the smoothness of the function $f(\cdot)$ is known from the ex ante considerations), then it is possible to show that $a_n(T-b_n)\Rightarrow \mathcal{G}$ where $\mathcal{G}$ is the Gumbel distribution for some $a_n$ and $b_n$ that are of order $(\log n)^{1/2}$. This implies that not only limit distributions vary among test statistics in the studied class but also appropriate normalizations are different. Further, note that the theorem also covers test statistics that are ``in between'' the two statistics described above (these are statistics with $\MS_n=\{(x,h):x\in\{X_{l(1)},...,X_{l(k)}\},h=h_n\}$ where $l(j)\in\{1,...,n\}$, $j=\overline{1,k}$, $k\in\{1,...,n\}$; these statistics are useful if the researcher wants to test monotonicity on some subset pf the support of $X$). The distribution of these statistics can be far both from $N(0,1)$ and from $\mathcal{G}$ in finite samples for \textit{any} sample size $n$. Finally, if the basic set of weighting functions is used, the limit distribution of the corresponding test statistic $T$ is \textit{unknown} in the literature, and one can guess that this distribution is quite complicated. In contrast, Theorem \ref{thm: size} shows that the critical values suggested in this paper are valid uniformly over the whole class of test statistics under consideration.

\noindent
(v) Note that $T$ asymptotically has a form of U-statistic. The analysis of such statistics typically requires a preliminary Hoeffding projection. An advantage of the approximation method used in this paper is that it applies directly to the test statistic with no need for the Hoeffding projection, which greatly simplifies the analysis.

\noindent
(vi) To obtain a particular application of the general result presented in this theorem, assume that the basic set of weighting functions introduced in Subsection \ref{sub: typical weighting functions} is used. Suppose that Assumption A\ref{as: disturbances} holds. In addition, suppose that either Assumption A\ref{as: function estimation} or Assumption A\ref{as: sigma} holds. Then Assumptions A\ref{as: V} and A\ref{as: growth condition} hold by Proposition \ref{lem: kernel function} in Appendix \ref{sec: verification of high level conditions} (under mild conditions on $K(\cdot)$ stated in Proposition \ref{lem: kernel function}), and so the result of Theorem \ref{thm: size} applies. Therefore, the basic set of weighting functions yields a test with the correct asymptotic size, and so it can be used for testing monotonicity. An advantage of this set is that, as will follow from Theorems \ref{thm: uniform consistency rate} and \ref{thm: minimax lower bound}, it gives an adaptive test with the best attainable rate of uniform consistency in the minimax sense against alternatives with regression functions that have Lipschitz-continuous first order derivatives.

\noindent
(vii) The theorem can be extended to cover more general versions of the test statistic $T$. In particular, the theorem continues to hold if the test statistic $T$ is replaced by
$$
\widetilde{T}=\widetilde{T}(\{X_i,Y_i\},\{\hat{\sigma}_i\},\MS_n)=\max_{s\in\MS_n}\frac{w(\{X_i\},s)b(\{X_i,Y_i\},s)}{\sqrt{\hat{V}(\{X_i\},\{\hat{\sigma}_i\},s)}}
$$
where for some $c,C>0$, $w(\{X_i\},\cdot):\MS_n\rightarrow [c,C]$ is the function that can be used to give unequal weights to different weighting functions (if $\widetilde{T}$ is used, then critical values should be calculated using $\widetilde{T}(\cdot,\cdot,\cdot)$ as well). This gives the researcher additional flexibility to incorporate information on what weighting functions are more important ex ante.

\noindent
(viii) Finally, the theorem can be extended by equicontinuity arguments to cover certain cases with infinite $\MS_n$ where maximum over infinite set of weighting functions in the test statistic $T$ can be well approximated by maximum over some finite set of weighting functions. Note that since $p$ in the theorem can be large, approximation is typically easy to achieve. In particular, if $\mathcal{X}$ is the support of $X$, $\mathcal{X}$ has positive Lebesgue measure, the distribution of $X$ is absolutely continuous with respect to Lebesgue measure on $\mathcal{X}$, and the density of $X$ is bounded below from zero and from above on $\mathcal{X}$, then the theorem holds with kernel weighting functions and $\MS_n=\{(x,h):x\in\mathcal{X},h\in H_n\}$ where $H_n=\{h=h_{\max}u^l:h\geq h_{\min},l=0,1,2,\dots\}$, $h_{\max}=C$ and $h_{\min}=ch_{\max}(\log n/n)^{1/3}$ for some $c,C>0$. I omit the proof for brevity. Note, however, that using this set of weighting functions would require knowledge of the support $\mathcal{X}$ (or some method to estimate it). In addition, taking maximum in the test statistic $T$ would require using some optimization methods whereas for the basic set of weighting functions, where the maximum is taken only over $Cn\log n$ points for some $C>0$, calculating the test statistic $T$ is computationally straightforward.\qed
\end{remark}

Let $s_l=\inf\{\text{support of }X\}$ and $s_r=\sup\{\text{support of }X\}$. To prove consistency of the test and to derive the rate of consistency against one-dimensional alternatives, I will also incorporate the following assumptions.

\begin{assumption}\label{as: design simple}
For any interval $[x_1,x_2]\subset[s_l,s_r]$, $\Pr(X\in[x_1,x_2])>0$.
\end{assumption}
\noindent
This is a mild assumption stating that the support of $X$ is the interval $[s_l,s_r]$ and that for any subinterval of the support, there is a positive probability that $X$ belongs to this interval. Let $c_2$ and $C_2$ be strictly positive constants.

\begin{assumption}\label{as: weighting functions simple}
For any interval $[x_1,x_2]\subset[s_l,s_r]$, w.p.a.1, there exists $s\in\MS_n$ satisfying (i) the support of $Q(\cdot,\cdot,s)$ is contained in $[x_1,x_2]^2$, (ii) $Q(\cdot,\cdot,s)$ is bounded from above by $C_2$, and (iii) there exist non-intersecting subintervals $[x_{l1},x_{r1}]$ and $[x_{l2},x_{r2}]$ of $[x_1,x_2]$ such that $Q(y_1,y_2,s)\geq c_2$ whenever $y_1\in[x_{l1},x_{r1}]$ and $y_2\in[x_{l2},x_{r2}]$.
\end{assumption}

This assumption is satisfied, for example, if Assumption A\ref{as: design simple} holds, kernel functions are used, $\MS_n=\{(x,h):x\in\{X_1,\dots,X_n\},h\in H_n\}$ where $H_n=\{h=h_{\max}u_l:h\geq h_{\min},l=0,1,2,\dots\}$, $h_{\max}$ is bounded below from zero, and $h_{\min}\rightarrow 0$.

Let $\mathcal{M}_1$ be a subset of $\mathcal{M}$ consisting of all models satisfying Assumptions A\ref{as: design simple} and A\ref{as: weighting functions simple}. Then
\begin{theorem}[Consistency against fixed alternatives]\label{thm: consistency}
Let $P=PI$, $OS$, or $SD$. Then for any model $M$ from the class $\mathcal{M}_1$ such that $f(\cdot)$ is continuously differentiable and there exist $x_1,x_2\in[s_l,s_r]$ such that $x_1<x_2$ but $f(x_1)>f(x_2)$ ($\MH_0$ is false),
$$
\Pr_M(T\leq c_{1-\alpha}^P)\rightarrow 0\text{ as }n\rightarrow\infty.
$$
\end{theorem}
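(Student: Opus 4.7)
My plan is to find a single weighting function in $\MS_n$ under which the test function blows up at rate $n^{1/2}$, which dominates the slowly-growing critical value.

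\medskip

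\noindent\textbf{Step 1 (localizing the violation).} First, since $f(\cdot)$ is continuously differentiable and $f(x_1)>f(x_2)$ for some $x_1<x_2$ in $[s_l,s_r]$, the mean value theorem produces some $\xi\in(x_1,x_2)$ with $f'(\xi)<0$; by continuity of $f'$, there exist $\delta>0$ and a subinterval $[a,b]\subset[x_1,x_2]$ containing $\xi$ such that $f'(x)\leq -\delta$ for every $x\in[a,b]$. Thus $f$ is strictly decreasing on $[a,b]$ with slope bounded above by $-\delta$, and in particular $f(y_1)-f(y_2)\geq \delta(y_2-y_1)$ whenever $a\leq y_1\leq y_2\leq b$.

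\medskip

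\noindent\textbf{Step 2 (picking a good $s^\star$ and controlling its mean and variance).} Applying Assumption A\ref{as: weighting functions simple} to the interval $[a,b]$, w.p.a.1 there exists $s^\star\in\MS_n$ with $\supp Q(\cdot,\cdot,s^\star)\subset[a,b]^2$, $Q(\cdot,\cdot,s^\star)\leq C_2$, and non-intersecting subintervals $[x_{l1},x_{r1}]$, $[x_{l2},x_{r2}]\subset[a,b]$, WLOG with $x_{r1}<x_{l2}$, on which $Q(\cdot,\cdot,s^\star)\geq c_2$. By symmetry of $Q$,
\[
\Ep[b(s^\star)\mid\{X_i\}]=\sum_{\{i,j\}:X_i<X_j}(f(X_i)-f(X_j))\,Q(X_i,X_j,s^\star),
\]
and since $f$ is monotone on $\supp Q$, every summand is nonnegative; restricting to pairs with $X_i\in[x_{l1},x_{r1}]$ and $X_j\in[x_{l2},x_{r2}]$ gives a lower bound
\[
\Ep[b(s^\star)\mid\{X_i\}]\geq \delta(x_{l2}-x_{r1})\,c_2\,N_1 N_2,
\]
where $N_r=|\{i:X_i\in[x_{lr},x_{rr}]\}|$. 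By Assumption A\ref{as: design simple}, the two intervals have strictly positive probabilities under the law of $X$, so $N_1,N_2=\Theta_p(n)$; hence $\Ep[b(s^\star)\mid\{X_i\}]\geq c_\star n^2$ w.p.a.1 for some $c_\star>0$. On the other hand, using $Q\leq C_2$ with support restricted to $[a,b]^2$,
\[
V(s^\star)=\sum_i\sigma_i^2\Bigl(\sum_j\sign(X_j-X_i)Q(X_i,X_j,s^\star)\Bigr)^2\leq C\,n\cdot (C_2 n)^2=O_p(n^3),
\]
and Assumption A\ref{as: V} yields $\hat V(s^\star)\leq 2V(s^\star)$ w.p.a.1.

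\medskip

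\noindent\textbf{Step 3 (concentration and lower bound on $T$).} Given $\{X_i\}$, the centered variable $b(s^\star)-\Ep[b(s^\star)\mid\{X_i\}]=\sum_i\eps_i\sum_j\sign(X_j-X_i)Q(X_i,X_j,s^\star)$ has conditional variance equal to $V(s^\star)$, so by Chebyshev its absolute value is $O_p(V(s^\star)^{1/2})=O_p(n^{3/2})$. Combining with Step 2,
\[
T\geq\frac{b(s^\star)}{\sqrt{\hat V(s^\star)}}\geq\frac{c_\star n^2-O_p(n^{3/2})}{\sqrt{2\,V(s^\star)}}\geq c\,n^{1/2}
\]
with probability approaching one, for some $c>0$.

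\medskip

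\noindent\textbf{Step 4 (upper bound on the critical values).} Because $c_{1-\alpha}^{SD}\leq c_{1-\alpha}^{OS}\leq c_{1-\alpha}^{PI}$, it suffices to control the plug-in critical value. Given $\{X_i\},\{\hat\sigma_i\}$, $b(\{X_i,Y_i^\star\},s)/\sqrt{\hat V(s)}$ is a centered unit-variance Gaussian for each $s$, so by Borell's inequality (as already invoked in the text after the definition of $T$) $c_{1-\alpha}^{PI}\leq C\sqrt{\log p}$ w.p.a.1. Assumption A\ref{as: growth condition}(ii) makes $\log p$ grow strictly slower than $n^{1/2}$, so $c_{1-\alpha}^{P}=o_p(n^{1/2})$ for $P\in\{PI,OS,SD\}$. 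Combining with Step 3 yields $\Pr_M(T>c_{1-\alpha}^{P})\to 1$, equivalently $\Pr_M(T\leq c_{1-\alpha}^P)\to 0$, which is the claim.

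\medskip

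The main delicacy I anticipate is Step 2: Assumption A\ref{as: weighting functions simple} is existential and the subintervals $[x_{lr},x_{rr}]$ may depend on $n$ and on the realized sample, so I need to be slightly careful that their lengths (and hence the counts $N_r$) are bounded below in probability. In the concrete kernel-weighted examples the paper has in mind this is immediate because the bandwidth in $H_n$ can be chosen bounded away from zero (so the subintervals have fixed positive length, and A\ref{as: design simple} gives $N_r=\Theta_p(n)$), so the argument goes through; for full generality one would need to strengthen A\ref{as: weighting functions simple} slightly or restrict attention to $s\in\MS_n$ for which the subintervals have $\Pr_X$-mass bounded below, which is a mild assumption in all the setups considered.
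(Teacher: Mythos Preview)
Your proof is correct and follows essentially the same route as the paper's: localize the violation via the mean value theorem, use A\ref{as: weighting functions simple} to pick a good $s^\star$, show $b(s^\star)/\hat V(s^\star)^{1/2}\gtrsim n^{1/2}$ w.p.a.1, and compare against $c_{1-\alpha}^{PI}=O_p(\sqrt{\log p})=o_p(n^{1/2})$. The only small difference is the source of the bound $\log p=o_p(n)$: the paper obtains it from A\ref{as: growth condition}(i) together with the observation that $\sum_i a_i(s)^2\sigma_i^2=1$ and A\ref{as: disturbances} force $A_n\geq c/\sqrt{n}$, whereas you invoke A\ref{as: growth condition}(ii); your closing caveat about the subintervals in A\ref{as: weighting functions simple} possibly depending on $n$ is well taken---the paper's own proof implicitly assumes they have $\Pr_X$-mass bounded below, which is immediate for the kernel examples but not literally guaranteed by A\ref{as: weighting functions simple} as stated.
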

\begin{remark}\label{com: consistency}
(i) This theorem shows that the test is consistent against any fixed
continuously differentiable alternative.

\noindent
(ii) To compare the critical values based on the selection procedures
developed in this paper with the plug-in approach (no selection
procedure), assume that $f(\cdot)$ is continuously differentiable and strictly
increasing ($\MH_{0}$ holds). Then an argument like that used in the proof of Theorem 2 shows
that $\MS_{n}^{OS}$ and $\MS_n^{SD}$ will be singletons w.p.a.1,
which means that $\Pr\{c_{1-\alpha}^{OS}\leq C\}\rightarrow1$ and $\Pr\{c_{1-\alpha}^{SD}\leq C\}\rightarrow1$ for some $C>0$. On the other
hand, it follows from the Sudakov-Chevet Theorem (see, for example, Theorem 2.3.5 in \cite{Dudley1999}) that $\Pr(c_{1-\alpha}^{PI}>C)\rightarrow 1$ for any $C>0$. Finally, under Assumption A\ref{as: weighting functions difficult}, which is stated below, it follows from the proof of lemma 2.3.15 in \cite{Dudley1999}
that $\Pr\{c_{1-\alpha}^{PI}>c\sqrt{\log n}\}\rightarrow1$ for some $c>0$. This implies that there exist sequences of alternatives against which the tests with the one-step and step-down critical values are consistent but the test with the plug-in critical value is not.\qed  
\end{remark}

\begin{theorem}[Consistency against one-dimensional alternatives]\label{thm: consistency rate against one-dimensional alternatives}
Let $P=PI$, $OS$, or $SD$. Consider any model $M$ from the class $\mathcal{M}_1$ such that $f(\cdot)$ is continuously differentiable and there exist $x_1,x_2\in[s_l,s_r]$ such that $x_1<x_2$ but $f(x_1)>f(x_2)$ ($\MH_0$ is false). Assume that for every sample size $n$, the true model $M_n$ coincides with $M$ except that the regression function has the form $f_n(x)=l_nf(x)$ for all $x\in\RR$ and for some sequence $\{l_n\}$ of positive numbers converging to zero. Then
$$
\Pr_{M_n}(T\leq c_{1-\alpha}^P)\rightarrow 0\text{ as }n\rightarrow\infty
$$
as long as $\log p=o_p(l_n^2n)$.
\end{theorem}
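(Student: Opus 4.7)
The plan is to exhibit a single weighting function $s^{\ast}\in\MS_n$ whose standardized test function exceeds the critical value with probability tending to one. Since $T=\max_{s\in\MS_n}b(s)/\sqrt{\hat V(s)}\geq b(s^{\ast})/\sqrt{\hat V(s^{\ast})}$ for any $s^{\ast}\in\MS_n$, this will suffice. Decompose
\[
b(s)=l_n\mu(s)+W(s),\quad \mu(s)=\frac{1}{2}\sum_{1\leq i,j\leq n}(f(X_i)-f(X_j))\sign(X_j-X_i)Q(X_i,X_j,s),
\]
so that $\mu(s)$ is the conditional expectation of $b(s)/l_n$ given $\{X_i\}$ and $W(s)$ is the conditional mean-zero piece with conditional variance $V(s)$.

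To construct $s^{\ast}$, use continuity of $f$ and $f(x_1)>f(x_2)$ to pick a closed interval $[a,b]\subset(s_l,s_r)$ containing $x_1$ and $x_2$ such that the non-intersecting sub-intervals $[x_{l1},x_{r1}]$ and $[x_{l2},x_{r2}]$ guaranteed by Assumption A\ref{as: weighting functions simple} lie in small neighborhoods of $x_1$ and $x_2$ respectively, giving $f(y_1)-f(y_2)\geq\eta:=(f(x_1)-f(x_2))/2>0$ whenever $y_1\in[x_{l1},x_{r1}]$ and $y_2\in[x_{l2},x_{r2}]$. By Assumption A\ref{as: design simple} each of these sub-intervals then contains $\Omega(n)$ observations with probability tending to one, and a case analysis over the four quadrants of $\{(i,j):X_i\in[x_{l1},x_{r1}]\cup[x_{l2},x_{r2}],X_j\in[x_{l1},x_{r1}]\cup[x_{l2},x_{r2}]\}$ shows all surviving terms in $\mu(s^{\ast})$ are nonnegative, yielding
\[
\mu(s^{\ast})\geq \frac{c_2\eta}{2}\cdot|\{i:X_i\in[x_{l1},x_{r1}]\}|\cdot|\{j:X_j\in[x_{l2},x_{r2}]\}|\geq c_3 n^2
\]
w.p.a.1 for some $c_3>0$. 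Boundedness of $Q(\cdot,\cdot,s^{\ast})$ by $C_2$ and of the $\sigma_i^2$ (A\ref{as: disturbances}) gives $V(s^{\ast})\leq C_4 n^3$, and A\ref{as: V} lets me replace $V(s^{\ast})$ by $\hat V(s^{\ast})$ at asymptotically negligible cost, so $l_n\mu(s^{\ast})/\sqrt{\hat V(s^{\ast})}\geq c_5\, l_n\sqrt{n}$ w.p.a.1. Conditional Chebyshev applied to $W(s^{\ast})$ then gives $W(s^{\ast})/\sqrt{\hat V(s^{\ast})}=O_p(1)$.

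For the critical value, $c_{1-\alpha}^{PI}$ is the $(1-\alpha)$ conditional quantile of the supremum over $\MS_n$ of the bootstrap Gaussian process whose marginal standardized variances are at most $1$; by the maximal inequality plus Borell's inequality, as remarked following A\ref{as: growth condition}, this quantile is bounded by $C\sqrt{\log p}$ w.p.a.1, and since $c_{1-\alpha}^{SD}\leq c_{1-\alpha}^{OS}\leq c_{1-\alpha}^{PI}$ the same bound applies in all three cases. Combining, $T\geq c_5\,l_n\sqrt{n}-O_p(1)$ while $c_{1-\alpha}^P=O_p(\sqrt{\log p})$, and the assumption $\log p=o_p(l_n^2 n)$ forces $l_n\sqrt{n}\gg\sqrt{\log p}$, hence $\Pr_{M_n}(T>c_{1-\alpha}^P)\to 1$, as claimed.

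The main obstacle is the deterministic construction in the second paragraph: Assumption A\ref{as: weighting functions simple} only asserts \emph{existence} of sub-intervals $[x_{l1},x_{r1}]$ and $[x_{l2},x_{r2}]$ on which $Q\geq c_2$, without specifying their location within $[a,b]$, whereas the signal calculation requires the left sub-interval to lie near $x_1$ and the right one near $x_2$ so that $f$ decreases across them. For the basic set of kernel weighting functions the symmetry of $Q(\cdot,\cdot,(x,h))$ about its center $x$ makes this transparent: centering at $(x_1+x_2)/2$ with bandwidth slightly exceeding $(x_2-x_1)/2$ places the sub-intervals adjacent to $x_1$ and $x_2$. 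For a general $\MS_n$ satisfying only A\ref{as: weighting functions simple}, the construction used in the proof of Theorem \ref{thm: consistency}, which already addresses precisely this issue for fixed alternatives, can be invoked; the rest of the argument above then carries through unchanged except for the bookkeeping of the shrinkage factor $l_n$.
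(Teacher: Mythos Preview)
Your overall strategy—find a single $s^{\ast}$ where the standardized signal $l_n\mu(s^{\ast})/\sqrt{\hat V(s^{\ast})}$ is of order $l_n\sqrt n$, note the noise is $O_p(1)$ and the critical value is $O_p(\sqrt{\log p})$, then conclude from $\log p=o_p(l_n^2 n)$—is exactly the paper's approach, which simply says ``follow the proof of Theorem~\ref{thm: consistency} with the factor $l_n$ tracked.''

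The construction in your second paragraph, however, has two related defects, only the first of which you flag. First (as you note), Assumption A\ref{as: weighting functions simple} does not let you choose where $[x_{l1},x_{r1}]$ and $[x_{l2},x_{r2}]$ sit inside $[a,b]$, so you cannot force them near $x_1$ and $x_2$. Second—and this you do not mention—even granting placement, your four-quadrant analysis only handles pairs $(i,j)$ with both $X_i,X_j$ in $[x_{l1},x_{r1}]\cup[x_{l2},x_{r2}]$, whereas $\mu(s^{\ast})$ sums over all pairs in the full support $[a,b]^2$ of $Q(\cdot,\cdot,s^{\ast})$. Since $f$ is not monotone on $[a,b]$ (you chose $[a,b]$ to contain both $x_1$ and $x_2$), terms with $X_i$ or $X_j$ outside the two subintervals can be negative and could in principle cancel the $\Omega(n^2)$ positive contribution. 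So $\mu(s^{\ast})\geq c_3 n^2$ is not justified by what you wrote.

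The fix you propose—invoke the construction from the proof of Theorem~\ref{thm: consistency}—is the right one and is precisely what the paper does: use the mean value theorem to find $x_0$ with $f'(x_0)<0$, take a short interval around $x_0$ on which $f'<f'(x_0)/2<0$, and apply A\ref{as: weighting functions simple} to \emph{that} interval. Then $f$ is strictly decreasing on the whole support of $Q(\cdot,\cdot,s^{\ast})$, so every summand in $\mu(s^{\ast})$ is nonnegative regardless of where the subintervals land, and the cross-subinterval terms alone give the $\Omega(n^2)$ lower bound. With $f_n=l_n f$ this becomes $\Omega(l_n n^2)$, and the remainder of your argument goes through verbatim.
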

\begin{remark}
(i) This theorem establishes the consistency of the test against one-dimensional local alternatives, which are often used in the literature to investigate the power of the test; see, for example, \cite{AndrewsandShi2010}, \cite{LeeandSongandWhang2011}, and the discussion in \cite{Horowitz2001}.

\noindent
(ii) Suppose that the basic set of weighting functions is used. Then $\log p\leq C\log n$ for some $C>0$, and so the test is consistent against one-dimensional local alternatives if $(\log n/n)^{1/2}=o(l_n)$.

\noindent
(iii) Now suppose that kernel weighting functions are used but $\MS_n$ is a maximal subset of the basic set such that for any $x_1,x_2,h$ satisfying $(x_1,h)\in\MS_n$ and $(x_2,h)\in\MS_n$, $|x_2-x_1|>2h$, and instead of $h_{\min}=0.4h_{\max}(\log n/n)^{1/3}$ we have $h_{\min}\rightarrow 0$ arbitrarily slowly. Then the test is consistent against one-dimensional local alternatives if $n^{-1/2}=o(l_n)$ (more precisely, for any sequence $l_n$ such that $n^{-1/2}=o(l_n)$, one can choose a sequence $h_{\min}=h_{\min,n}$ satisfying $h_{\min}\rightarrow 0$ sufficiently slowly so that the test is consistent). In words, this test is $\sqrt{n}$-consistent against such alternatives. I note however, that the practical value of this $\sqrt{n}$-consistency is limited because there is no guarantee that for any given sample size $n$ and given deviation from $\MH_0$, weighting functions suitable for detecting this deviation are already included in the test statistic. In contrast, it will follow from Theorem \ref{thm: uniform consistency rate} that the test based on the basic set of weighting functions does provide this guarantee.\qed
\end{remark}

Let $c_3,C_3,c_4,C_4,c_5$ be strictly positive constants. In addition, let $L>0$, $\beta\in(0,1]$, $k\geq 0$, and $h_n=(\log p/n)^{1/(2\beta+3)}$. To derive the uniform consistency rate against the classes of alternatives with Lipschitz derivatives, Assumptions A\ref{as: design simple} and A\ref{as: weighting functions simple} will be replaced by the following (stronger) conditions.

\begin{assumption}\label{as: design difficult}
For any interval $[x_1,x_2]\subset[s_l,s_r]$, $c_3(x_2-x_1)\leq \Pr(X\in[x_1,x_2])\leq C_3(x_2-x_1)$.
\end{assumption}
\noindent
This assumption is stronger than A\ref{as: design simple} in that it bounds the probabilities from above in addition to bounding probabilities from below and excludes mass points but is still often imposed in the literature.

\begin{assumption}\label{as: weighting functions difficult}
W.p.a.1, for all $[x_1,x_2]\subset[s_l,s_r]$ with $x_2-x_1=h_n$, there exists $s\in\MS_n$ satisfying (i) the support of $Q(\cdot,\cdot,s)$ is contained in $[x_1,x_2]^2$, (ii) $Q(\cdot,\cdot,s)$ is bounded from above by $C_4h_n^k$, and (iii) there exist non-intersecting subintervals $[x_{l1},x_{r1}]$ and $[x_{l2},x_{r2}]$ of $[x_1,x_2]$ such that $x_{r1}-x_{l1}\geq c_5h_n$, $x_{r2}-x_{l2}\geq c_5h_n$, $x_{l2}-x_{r1}\geq c_5h_n$, and $Q(y_1,y_2,s)\geq c_4h_n^k$ whenever $y_1\in[x_{l1},x_{r1}]$ and $y_2\in[x_{l2},x_{r2}]$.
\end{assumption}

\noindent
This condition is stronger than Assumption A\ref{as: weighting functions simple}; specifically, in Assumption A\ref{as: weighting functions difficult}, the qualifier ``w.p.a.1'' applies uniformly over all $[x_1,x_2]\subset[s_l,s_r]$ with $x_2-x_1=h_n$. Proposition \ref{lem: asumption 9 verification} shows that Assumption A\ref{as: weighting functions difficult} is satisfied under mild conditions on the kernel $K(\cdot)$ if Assumption A\ref{as: design difficult} holds and the basic set of weighting functions is used.

Let $f^{(1)}(\cdot)$ denote the first derivative of $f(\cdot)$.
\begin{assumption}\label{as: function smoothness}
For any $x_1,x_2\in [s_l,s_r]$, $|f^{(1)}(x_1)-f^{(1)}(x_2)|\leq L|x_1-x_2|^\beta$.
\end{assumption}
\noindent
This is a smoothness condition that requires that the regression function is sufficiently well-behaved.

Let $\mathcal{M}_2$ be the subset of $\mathcal{M}$ consisting of all models satisfying Assumptions A\ref{as: design difficult}, A\ref{as: weighting functions difficult}, and A\ref{as: function smoothness}. The following theorem gives the uniform rate of consistency.

\begin{theorem}[Uniform consistency rate]\label{thm: uniform consistency rate}
Let $P=PI$, $OS$, or $SD$. Consider any sequence of positive numbers $\{l_n\}$ such that $l_n\rightarrow\infty$, and let $\mathcal{M}_{2n}$ denote the subset of $\mathcal{M}_2$ consisting of all models such that the regression function $f$ satisfies $\inf_{x\in [s_l,s_r]}f^{(1)}(x)<-l_n(\log p/n)^{\beta/(2\beta+3)}$. Then
$$
\sup_{M\in\mathcal{M}_{2n}}\Pr_M(T\leq c_{1-\alpha}^P)\rightarrow 0\text{ as }n\rightarrow\infty.
$$
\end{theorem}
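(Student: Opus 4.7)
The plan is to exhibit, for each model $M\in\mathcal{M}_{2n}$, a single well-chosen index $s^{*}\in\MS_n$ at which $b(s^{*})/\sqrt{\hat V(s^{*})}$ is already of order $l_n(\log p)^{1/2}$, whereas $c_{1-\alpha}^{P}$ is at most of order $(\log p)^{1/2}$. Since $l_n\to\infty$, this forces rejection w.p.a.1 uniformly over $\mathcal{M}_{2n}$.

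The first step is to localize in $x$. Writing $h_n=(\log p/n)^{1/(2\beta+3)}$, the hypothesis $\inf_{x\in[s_l,s_r]}f^{(1)}(x)<-l_n h_n^{\beta}$ furnishes an $x^{*}$ with $f^{(1)}(x^{*})\leq -l_n h_n^{\beta}/2$, and Assumption A\ref{as: function smoothness} extends this to $f^{(1)}(y)\leq -l_n h_n^{\beta}/4$ for every $y$ in the $h_n$-neighborhood of $x^{*}$, because $l_n$ eventually dominates the fixed H\"older constant $L$. I would then apply Assumption A\ref{as: weighting functions difficult} to a subinterval of length $h_n$ inside this neighborhood to obtain $s^{*}\in\MS_n$ whose weight $Q(\cdot,\cdot,s^{*})$ is supported in that subinterval, is bounded above by $C_4 h_n^{k}$, and is bounded below by $c_4 h_n^{k}$ on a pair of non-intersecting sub-intervals $[x_{l1},x_{r1}]$ and $[x_{l2},x_{r2}]$ of lengths $\geq c_5 h_n$ separated by $\geq c_5 h_n$.

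The heart of the proof is the signal-to-noise calculation, done conditionally on $\{X_i\}$. Since $\Ep[\eps_i\mid X_i]=0$,
\[
\mu(s^{*}):=\Ep[b(s^{*})\mid\{X_i\}]=(1/2)\sum_{i,j}(f(X_i)-f(X_j))\sign(X_j-X_i)Q(X_i,X_j,s^{*}).
\]
Restricting the sum to pairs with $X_i\in[x_{l2},x_{r2}]$ and $X_j\in[x_{l1},x_{r1}]$, each term is positive and bounded below by $(c_5 l_n h_n^{\beta+1}/4)\cdot c_4 h_n^{k}$, and Assumption A\ref{as: design difficult} with a Bernstein bound gives $\gtrsim n^{2}h_n^{2}$ such pairs with probability tending to one uniformly in the class. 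Hence $\mu(s^{*})\gtrsim l_n\,n^{2}h_n^{k+\beta+3}$. A matching counting argument using the upper bound $Q\leq C_4 h_n^{k}$ and A\ref{as: disturbances} yields $V(s^{*})\lesssim n^{3}h_n^{2k+3}$, and combining,
\[
\frac{\mu(s^{*})}{\sqrt{V(s^{*})}}\;\gtrsim\; l_n\sqrt{n}\,h_n^{\beta+3/2}\;=\;l_n(\log p)^{1/2},
\]
by the definition of $h_n$.

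To pass from $\mu(s^{*})$ to $b(s^{*})$, Chebyshev's inequality conditional on $\{X_i\}$ gives $b(s^{*})-\mu(s^{*})=O_p(V(s^{*})^{1/2})$, which is negligible compared to $l_n(\log p)^{1/2}\sqrt{V(s^{*})}$; Assumption A\ref{as: V} then lets me replace $V(s^{*})$ by $\hat V(s^{*})$ at the cost of a $1+o_p(1)$ factor. For the critical value, note that $c_{1-\alpha}^{SD}\leq c_{1-\alpha}^{OS}\leq c_{1-\alpha}^{PI}$, and under the plug-in bootstrap $T^{\star}$ is the maximum of $p$ standardized Gaussian variables; Lemma \ref{lem: maximal inequality} together with Borell's inequality (as already used in the discussion of the general set-up) gives $c_{1-\alpha}^{PI}\leq C(\log p)^{1/2}$ uniformly in $\mathcal{M}$. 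Comparing the two bounds and sending $n\to\infty$ yields the theorem. The main obstacle I anticipate is establishing the counting estimates of Step 3 \emph{uniformly} over $\mathcal{M}_{2n}$: I must show that every interval of length $\asymp h_n$ inside $[s_l,s_r]$ carries $\asymp n h_n$ observations with probability approaching one at a model-free rate. This is delivered by the two-sided density bound in A\ref{as: design difficult} together with a uniform Bernstein inequality, using $n h_n=n^{(2\beta+2)/(2\beta+3)}(\log p)^{1/(2\beta+3)}\to\infty$ (guaranteed by A\ref{as: growth condition}).
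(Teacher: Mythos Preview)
Your proposal is correct and follows essentially the same route as the paper's proof: localize via the H\"older condition A\ref{as: function smoothness} to an interval of length $h_n$ on which $f^{(1)}\lesssim -l_n h_n^{\beta}$, invoke A\ref{as: weighting functions difficult} to obtain $s^{*}$, bound the signal $\mu(s^{*})\gtrsim l_n n^{2}h_n^{k+\beta+3}$ and the variance $V(s^{*})\lesssim (nh_n)^{3}h_n^{2k}$ via the counting bounds from A\ref{as: design difficult} (the paper's Lemma~\ref{lem: technical lemma} is exactly your Bernstein step), and compare the resulting $l_n(\log p)^{1/2}$ signal-to-noise ratio to the $(\log p)^{1/2}$ bound on the critical value. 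The only cosmetic difference is that the paper routes the critical-value bound through $c_{1-\alpha+\psi_n}^{PI,0}$ and Lemma~\ref{lem: maximal inequality} plus Markov, whereas you invoke Borell directly on the bootstrap statistic; both give the same $C(\log p)^{1/2}$ ceiling.
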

\begin{remark}\label{rem: comment uniform consistency}
(i) Theorem \ref{thm: uniform consistency rate} gives the rate of uniform consistency of the test against classes of functions with Lipschitz-continuous first order derivative with Lipschitz constant $L$ and order $\beta$. Importance
of \textit{uniform} consistency against sufficiently large classes
of alternatives like those considered here was previously emphasized
in \cite{Horowitz2001}. Intuitively, it guarantees that there are
no reasonable alternatives against which the test has low power if
the sample size is sufficiently large.

\noindent
(ii) Theorem \ref{thm: uniform consistency rate} shows that plug-in, one-step, and step-down critical values yield tests with the same rate of uniform consistency. Nonetheless, it does not mean that the selection procedures used in one-step and step-down critical values yield no power improvement in comparison with plug-in critical value. Specifically, it was shown in Comment \ref{com: consistency} that there exist sequences of alternatives against which tests with one-step and step-down critical values are consistent but the test with the plug-in  critical value is not.  

\noindent
(iii) Suppose that $\MS_n$ consists of the basic set of weighting functions. In addition, suppose that Assumptions A\ref{as: disturbances} and A\ref{as: design difficult} are satisfied. Further, suppose that either Assumption A\ref{as: function estimation} or A\ref{as: sigma} is satisfied. Then Assumptions A\ref{as: V} and A\ref{as: growth condition} hold by Proposition \ref{lem: kernel function} and Assumption A\ref{as: weighting functions difficult} holds by Proposition \ref{lem: asumption 9 verification} under mild conditions on the kernel $K(\cdot)$. Then Theorem \ref{thm: uniform consistency rate} implies that the test with this set of weighting functions is uniformly consistent against classes of functions with Lipschitz-continuous first order derivative with Lipschitz order $\beta$ whenever $\inf_{x\in[s_l,s_r]}f^{(1)}_n(x)<-l_n(\log n/n)^{\beta/(2\beta+3)}$ for some $l_n\rightarrow\infty$. On the other hand, it will be shown in Theorem \ref{thm: minimax lower bound} that no test can be uniformly consistent against models with $\inf_{x\in[s_l,s_r]}f^{(1)}_n(x)>-C(\log n/n)^{\beta/(2\beta+3)}$ for some sufficiently small $C>0$ if it controls size, at least asymptotically. Therefore, the test based on the basic set of weighting functions is rate optimal in the minimax sense.

\noindent
(iv) Note that the test is rate optimal in the minimax sense simultaneously against classes of functions with Lipschitz-continuous first order derivative with Lipschitz order $\beta$ for all $\beta\in(0,1]$. In addition, implementing the test does not require the knowledge of $\beta$. For these reasons, the test with the basic set of weighting functions is called adaptive and rate optimal.\qed
\end{remark}

To conclude this section, I present a theorem that gives a lower bound on the possible rates of uniform consistency against the class $\mathcal{M}_2$ so that no test that maintains asymptotic size can have a faster rate of uniform consistency. Let $\psi=\psi(\{X_i,Y_i\})$ be a generic test. In other words, $\psi(\{X_i,Y_i\})$ is the probability that the test rejects upon observing the data $\{X_i,Y_i\}$. Note that for any deterministic test $\psi=0$ or $1$.

\begin{theorem}[Lower bound on possible consistency rates]\label{thm: minimax lower bound}
For any test $\psi$ satisfying $\Ep_M[\psi]\leq \alpha+o(1)$ as $n\rightarrow\infty$ for all models $M\in\mathcal{M}_2$ such that $\MH_0$ holds,
there exists a sequence of models $M=M_n$ belonging to the class $\mathcal{M}_2$ such that $f(\cdot)=f_n(\cdot)$ satisfies $\inf_{x\in[s_l,s_r]}f_{n}^{(1)}(x)<-C(\log n/n)^{\beta/(2\beta+3)}$ for some sufficiently small constant $C>0$ and
$\Ep_{M_n}[\psi]\leq \alpha+o(1)$
as $n\rightarrow\infty$. Here $\Ep_{M_n}[\cdot]$ denotes the expectation under the distributions of the model $M_n$.
\end{theorem}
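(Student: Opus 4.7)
The plan is a classical Ingster-type minimax lower bound combined with Le Cam's two-point (many-bump) reduction. I would fix a ``reference'' null model $P_0$ with a gently increasing smooth regression function $g_0$, Gaussian errors, and a uniform design on $[s_l,s_r]$, and build a large family of mutually ``orthogonal'' alternatives $\{P_{j,n}\}_{j=1}^{N_n}$ obtained by adding a small localized bump to $g_0$ at one of many disjoint locations. By showing that the uniform mixture of these alternatives is total-variation close to $P_0$, I can conclude that any test $\psi$ with asymptotic level $\alpha$ under $P_0$ must have power at most $\alpha+o(1)$ against the average alternative, hence against at least one of them.

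Set $h_n=(\log n/n)^{1/(2\beta+3)}$ and $N_n=\lfloor c_0/h_n\rfloor$ for a small $c_0>0$, and pick centers $x_1<\cdots<x_{N_n}$ in $[s_l,s_r]$ that are $3h_n$-separated. Fix a smooth bump $\phi:\RR\to\RR$ supported in $[-1,1]$ whose derivative $\phi^{(1)}$ is Lipschitz-$\beta$ with constant $\leq L/2$ and satisfies $\inf_u\phi^{(1)}(u)\leq -c^\star<0$. Let $g_0(x)=\delta x$ and
\[
f_{j,n}(x)=g_0(x)+A\,h_n^{\beta+1}\,\phi\!\Bigl(\frac{x-x_j}{h_n}\Bigr),\qquad j=1,\dots,N_n,
\]
so that $f_{j,n}^{(1)}(x)=\delta+Ah_n^\beta\phi^{(1)}((x-x_j)/h_n)$. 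Because the bumps are disjointly supported, $f_{j,n}^{(1)}$ is Lipschitz-$\beta$ with constant $\leq L$ for bounded $A$, so (using the basic set of weighting functions, a uniform design, and the local Rice estimator) each $f_{j,n}$ can be embedded in a model $M_{j,n}\in\mathcal{M}_2$ via Propositions \ref{lem: kernel function} and \ref{lem: asumption 9 verification}; moreover $\inf_x f_{j,n}^{(1)}(x)\leq\delta-Ac^\star h_n^\beta<-Ch_n^\beta$ as soon as $\delta$ is small and $Ac^\star-\delta>C$. Let $P_{j,n}$ (resp. $P_0$) denote the law of $n$ i.i.d. copies of $(X,Y)$ with $Y=f_{j,n}(X)+\varepsilon$ (resp. $Y=g_0(X)+\varepsilon$), $\varepsilon\sim N(0,\sigma^2)$, which easily satisfies A\ref{as: disturbances}.

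The heart of the argument is the $\chi^2$ computation for the uniform mixture $\bar P_n=N_n^{-1}\sum_j P_{j,n}$:
\[
1+\chi^2(\bar P_n\|P_0)=\frac{1}{N_n^2}\sum_{j,k}\Ep_{P_0}\Bigl[\prod_{i=1}^n\frac{dP_{j,n}}{dP_0}(Z_i)\frac{dP_{k,n}}{dP_0}(Z_i)\Bigr].
\]
The Gaussian likelihood-ratio identity reduces the inner expectation to $\bigl(\Ep_{f_X}[\exp(\Delta_j(X)\Delta_k(X)/\sigma^2)]\bigr)^n$, where $\Delta_j(x)=Ah_n^{\beta+1}\phi((x-x_j)/h_n)$. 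Disjointness of the supports gives $\Delta_j\Delta_k\equiv 0$ whenever $j\neq k$, so every cross term equals $1$, while each diagonal term equals $\exp\bigl(nA^2h_n^{2\beta+3}\|\phi\|_2^2f_X(x_j)/\sigma^2\bigr)\leq\exp(C_1A^2\log n)$. Hence $\chi^2(\bar P_n\|P_0)\leq N_n^{-1}\bigl(n^{C_1A^2}-1\bigr)$, and since $N_n\asymp(n/\log n)^{1/(2\beta+3)}$ is polynomial in $n$, choosing $A$ with $C_1A^2<1/(2\beta+3)$ forces $\chi^2(\bar P_n\|P_0)\to0$. By Pinsker/Le Cam, $\|\bar P_n-P_0\|_{TV}=o(1)$, so $\Ep_{\bar P_n}[\psi]\leq\Ep_{P_0}[\psi]+o(1)\leq\alpha+o(1)$, and picking $j_n\in\arg\min_j\Ep_{P_{j,n}}[\psi]$ and setting $M_n=M_{j_n,n}$ yields the claimed alternative.

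The main obstacle is the simultaneous calibration of the constants $A$, $\delta$, $c^\star$, and $C$: the lower bound on $\inf f_{j,n}^{(1)}$ demands $A$ large enough that $Ac^\star-\delta>C$, while the $\chi^2$-bound demands $A^2<1/(C_1(2\beta+3))$. These two requirements are compatible only when $C$ is a sufficiently small universal constant depending on $\beta$, $\sigma$, $f_X$, and the best achievable $c^\star/\|\phi\|_2$ ratio over admissible bumps, which is exactly the ``some sufficiently small $C>0$'' in the theorem's statement. The remaining items — extending from Gaussian to the larger noise class of A\ref{as: disturbances} (done by restricting attention to a Gaussian sub-model, which is enough since $\mathcal{M}_2$ contains it), and verifying that $g_0$ itself lies in the null class $\mathcal{M}_0\cap\mathcal{M}_2$ — are routine.
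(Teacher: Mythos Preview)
Your approach is essentially the same as the paper's: both construct a uniform mixture of $\asymp h_n^{-1}$ disjointly supported bumps of height $\asymp h_n^{\beta+1}$ over a null model and show the mixture is indistinguishable from the null by bounding the second moment of the averaged likelihood ratio. Your $\chi^2$ computation and the paper's bound on $\Ep_{M_0}\bigl[(\sum_l\rho_l/L-1)^2\bigr]$ are the same calculation; the paper conditions on $\{X_i\}$ first and uses conditional independence of the Gaussian statistics $\xi_{n,l}$, while you integrate over $X$ directly and use disjointness of the $\Delta_j$'s --- both routes lead to the same $n^{CA^2}/N_n$ bound.

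One genuine gap in your construction: taking $g_0(x)=\delta x$ with a \emph{fixed} $\delta>0$ does not work. Since $h_n^\beta\to 0$, for all large $n$ you have $\inf_x f_{j,n}^{(1)}(x)=\delta-Ac^\star h_n^\beta>0$, so each $f_{j,n}$ is eventually strictly increasing and lies in $\MH_0$ rather than violating it with margin $-Ch_n^\beta$. The fix is trivial --- take $\delta=0$ (a constant reference function), which is exactly what the paper does; then $\inf_x f_{j,n}^{(1)}=-Ac^\star h_n^\beta$ and the rest of your argument goes through unchanged.
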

\begin{remark}
This theorem shows that no test can be uniformly consistent against models with $\inf_{x\in[s_l,s_r]}f^{(1)}_n(x)>-C(\log n/n)^{\beta/(2\beta+3)}$ for some sufficiently small $C>0$ if it controls size, at least asymptotically.\qed
\end{remark}

\section{Models with Multiple Covariates}\label{sec: multivariate models}

Most empirical studies contain additional covariates that should be
controlled for. In this section, I extend the results presented in Section \ref{sec: theory under high level conditions} to allow for this possibility. I consider cases of both nonparametric and partially linear models. For brevity, I will only consider the results concerning size properties of the test. The power properties can be obtained using the arguments closely related to those used in Theorems \ref{thm: consistency}, \ref{thm: consistency rate against one-dimensional alternatives}, and \ref{thm: uniform consistency rate}.

\subsection{Multivariate Nonparametric Model}
In this subsection, I consider a general nonparametric regression model, so that the model is given by
\begin{equation}\label{eq: nonparametric model}
Y=f(X,Z)+\eps
\end{equation}
where $Y$ is a scalar dependent random variable, $X$ a scalar independent random variable, $Z$ a vector in $\RR^d$ of additional independent random variables that should be controlled for, $f(\cdot)$ an unknown function, and $\eps$ an unobserved scalar random variable satisfying $\Ep[\eps|X,Z]=0$ almost surely. 

Let $S_z$ be some subset of $\RR^d$. The null hypothesis, $\MH_0$, to be tested is that for any $x_1,x_2\in\RR$ and $z\in S_z$, $f(x_1,z)\leq f(x_2,z)$ whenever $x_1\leq x_2$. The alternative, $\MH_a$, is that there are $x_1,x_2\in\RR$ and $z\in S_z$ such that $x_1< x_2$ but $f(x_1,z)>f(x_2,z)$.  The decision is to be made based on the i.i.d. sample of size $n$, $\{X_i,Z_i,Y_i\}_{1\leq i\leq n}$ from the distribution of $(X,Z,Y)$.

The choice of the set $S_z$ is up to the researcher and has to be made depending on the hypothesis to be tested. For example, if $S_z=\RR^d$, then $\MH_0$ means that the function $f(\cdot)$ is increasing in the first argument for all values of the second argument. If the researcher is interested in one particular value, say $z_0$, then she can set $S_z=z_0$, which will mean that under $\MH_0$, the function $f(\cdot)$ is increasing in the first argument when the second argument equals $z_0$.

The advantage of the nonparametric model studied in this subsection is that it is fully flexible and, in particular, allows for heterogeneous effects of $X$ on $Y$. On the other hand, the nonparametric model suffers from the curse of dimensionality and may result in tests with low power if the researcher has many additional covariates. In this case, it might be better to consider the partially linear model studied below.

To define the test statistic, let $\MS_n$ be some general set that depends on $n$ and is (implicitly) allowed to depend on $\{X_i,Z_i\}$. In addition, let $z:\MS_n\rightarrow S_z$ and $\ell:\MS_n\rightarrow(0,\infty)$ be some functions, and for $s\in\mathcal{S}_n$, let $Q(\cdot,\cdot,\cdot,\cdot,s):\mathbb{R}\times\mathbb{R}\times\mathbb{R}^d\times\mathbb{R}^d\rightarrow \mathbb{R}$ be weighting functions satisfying
$$
Q(x_1,x_2,z_1,z_2,s)=\bar{Q}(x_1,x_2,s)\bar{K}\left(\frac{z_1-z(s)}{\ell(s)}\right)\bar{K}\left(\frac{z_2-z(s)}{\ell(s)}\right)
$$
for all $x_1$, $x_2$, $z_1$, and $z_2$ where the functions $\bar{Q}(\cdot,\cdot,s)$ satisfy $\bar{Q}(x_1,x_2,s)=\bar{Q}(x_2,x_1,s)$ and $\bar{Q}(x_1,x_2,s)\geq 0$ for all $x_1$ and $x_2$. For example, $\bar{Q}(\cdot,\cdot,s)$ can be a kernel weighting function. The functions $Q(\cdot,\cdot,\cdot,\cdot,s)$ are also (implicitly) allowed to depend on $\{X_i,Z_i\}$. Here $\bar{K}:\RR^d\rightarrow\RR$ is some positive compactly supported auxiliary kernel function, and $\ell(s)$, $s\in\MS_n$, are auxiliary bandwidth values. Intuitively, $Q(\cdot,\cdot,\cdot,\cdot,s)$ are local-in-$z(s)$ weighting functions. It is important here that the auxiliary bandwidth values $\ell(s)$ depend on $s$. For example, if kernel weighting functions are used, so that $s=(x,h,z,\ell)$, then one has to choose $h=h(s)$ and $\ell=\ell(s)$ so that $n h\ell^{d+2}=o_p(1/\log p)$ and $1/(n h\ell^d)\leq Cn^{-c}$ w.p.a.1 for some $c,C>0$ uniformly over all $s=(x,h,z,\ell)\in\MS_n$; see discussion below.

Further, let
\begin{equation}\label{eq: test function definition multivariate}
b(s)=b(\{X_i,Z_i,Y_i\},s)=(1/2)\sum_{1\leq i,j\leq n}(Y_i-Y_j)\sign(X_j-X_i)Q(X_i,X_j,Z_i,Z_j,s)
\end{equation}
be a test function. Conditional on $\{X_i,Z_i\}$, the variance of $b(s)$ is given by
\begin{equation}\label{eq: variance definition multivariate}
V(s)=V(\{X_i,Z_i\},\{\sigma_i\},s)=\sum_{1\leq i\leq n}\sigma_i^2\(\sum_{1\leq j\leq n}\sign(X_j-X_i)Q(X_i,X_j,Z_i,Z_j,s)\)^2
\end{equation}
where $\sigma_i=(\Ep[\eps_i^2|X_i,Z_i])^{1/2}$, and estimated variance is
\begin{equation}\label{eq: variance estimated definition multivariate}
\hat{V}(s)=V(\{X_i,Z_i\},\{\hat{\sigma}_i\},s)=\sum_{1\leq i\leq n}\hat{\sigma}_i^2\(\sum_{1\leq j\leq n}\sign(X_j-X_i)Q(X_i,X_j,Z_i,Z_j,s)\)^2
\end{equation}
where $\hat{\sigma}_i$ is some estimator of $\sigma_i$.
Then the test statistic is
$$
T=T(\{X_i,Z_i,Y_i\},\{\hat{\sigma}_i\},\MS_n)=\max_{s\in\MS_n}\frac{b(\{X_i,Z_i,Y_i\},s)}{\sqrt{\hat{V}(\{X_i,Z_i\},\{\hat{\sigma}_i\},s)}}.
$$
Large values of $T$ indicate that $\MH_0$ is violated. The critical value for the test can be calculated using any of the methods described in Section \ref{sec: test}. For example, the plug-in critical value is defined as the conditional $(1-\alpha)$ quantile of $T^\star=T(\{X_i,Z_i,Y_i^\star\},\{\hat{\sigma}_i\},\MS_n)$ given $\{X_i,Z_i\}$, $\{\hat{\sigma}_i\}$, and $\MS_n$ where $Y_i^\star=\hat{\sigma}_i\epsilon_i$ for $i=\overline{1,n}$.

Let $c_{1-\alpha}^{PI}$, $c_{1-\alpha}^{OS}$, and $c_{1-\alpha}^{SD}$ denote the plug-in, one-step, and step-down critical values, respectively.
In addition, let 
\begin{equation}\label{eq: growth condition multivariate}
A_n=\max_{s\in\MS_n}\max_{1\leq i\leq n}\left|\sum_{1\leq j\leq n}\sign(X_j-X_i)Q(X_i,X_j,Z_i,Z_j,s)/(V(s))^{1/2}\right|
\end{equation}
be a sensitivity parameter. Recall that $p=|\MS_n|$.
To prove the result concerning multivariate nonparametric model, I will impose the following condition.
\begin{assumption}\label{as: multivariate nonparametric model}
(i) $\ell(s)\sum_{1\leq i,j\leq n}Q(X_i,X_j,Z_i,Z_j,s)/(V(s))^{1/2}=o_p(1/\sqrt{\log p})$ uniformly over $s\in\MS_n$, 
and (ii) the regression function $f$ has uniformly bounded first order partial derivatives.
\end{assumption}

Discussion of this assumption is given below.
In addition, I will need to modify Assumptions A\ref{as: disturbances}, A\ref{as: function estimation}, and A\ref{as: growth condition}:

\vspace{.2cm}
\noindent
\textbf{A1$'$} \textit{(i) $\Pr(|\varepsilon|\geq u|X,Z)\leq \exp(-u/C_1)$ for all $u>0$ and $\sigma_i\geq c_1$ for all $i=\overline{1,n}$.}
\vspace{.04cm}

\vspace{.2cm}
\noindent
\textbf{A2$'$} \textit{(i) $\hat{\sigma}_i=Y_i-\hat{f}(X_i,Z_i)$ for all $i=\overline{1,n}$ and (ii) $\hat{f}(X_i,Z_i)-f(X_i,Z_i)=o_p(n^{-\kappa_1})$ uniformly over $i=\overline{1,n}$.}
\vspace{.04cm}

\noindent
\textbf{A5$'$} \textit{(i) $A_n(\log(pn))^{7/2}=o_p(1)$, (ii) if A2$'$ holds, then $\log p/n^{(1/4)\wedge\kappa_1\wedge\kappa_3}=o_p(1)$, and if A3 holds, then $\log p/n^{\kappa_2\wedge\kappa_3}=o_p(1)$.}
\vspace{.04cm}

\noindent
Assumption A1$'$ imposes the restriction that $\eps_i$'s have sub-exponential tails, which is stronger than Assumption A\ref{as: disturbances}. It holds, for example, if $\eps_i$'s have normal distribution or $\eps_i$'s are uniformly bounded in absolute value.  Assumption A2$'$ is a simple extension of Assumption A2 to account for the vector of  additional covariates $Z$. Further, assume that $\bar{Q}(\cdot,\cdot,s)$ is a kernel weighting function for all $s\in\MS_n$, so that $s=(x,h,z,\ell)$, the joint density of $X$ and $Z$ is bounded below from zero and from above on its support, and $h\geq ((\log n)^2/(C n))^{1/(d+1)}$ and $\ell\geq ((\log n)^2/(C n))^{1/(d+1)}$ w.p.a.1 for some $C>0$ uniformly over all $s=(x,h,z,\ell)\in\MS_n$. Then it follows as in the proof of Proposition \ref{lem: kernel function}, that A\ref{as: multivariate nonparametric model}-i holds if $n h\ell^{d+2}=o_p(1/\log p)$ uniformly over all $s=(x,h,z,\ell)\in\MS_n$ and that A5$'$-i holds if $1/(n h\ell^d)\leq C n^{-c}$ w.p.a.1 for some $c,C>0$ uniformly over all $s=(x,h,z,\ell)\in\MS_n$.


The key difference between the multivariate case studied in this section and univariate case studied in Section \ref{sec: theory under high level conditions} is that now it is not necessarily the case that  $\Ep[b(s)]\leq0$ under $\MH_0$. The reason is that it can be the case under $\MH_0$ that $f(x_1,z_1)>f(x_2,z_2)$ even if $x_1<x_2$ unless $z_1=z_2$. This yields a bias term in the test statistic. Assumption A\ref{as: multivariate nonparametric model} ensures that this bias is asymptotically negligible relative to the concentration rate of the test statistic. The difficulty, however, is that this assumption contradicts the condition $n A_n^4(\log(pn))^7=o_p(1)$ imposed in Assumption A\ref{as: growth condition} and used in the theory for the case when $Z$ is absent. Indeed, under the assumptions specified in the paragraph above, the condition $n A_n^4(\log(p n))^7=o_p(1)$ requires $1/(nh^2\ell^{2d})=o_p(1)$ uniformly over all $s=(x,h,z,\ell)\in\MS_n$, which is impossible if $n h\ell^{d+2}=o_p(1/\log p)$ uniformly over all $s=(x,h,z,\ell)\in\MS_n$ and $d\geq 2$. To deal with this problem, I have to relax Assumption A\ref{as: growth condition}. This in turn requires imposing stronger conditions on the moments of $\varepsilon$. For these reasons, I replace Assumption A\ref{as: disturbances} by A1$'$.
This allows me to apply a powerful method developed in \cite{ChernozhukovChetverikov12} and replace Assumption A\ref{as: growth condition} by A5$'$.

Let $\mathcal{M}_{NP}$ denote the set of models given by equation (\ref{eq: nonparametric model}), function $f(\cdot)$, joint distribution of $X$, $Z$, and $\varepsilon$ satisfying $\Ep[\eps|X,Z]=0$ almost surely, weighting functions $Q(\cdot,\cdot,\cdot,\cdot,s)$ for $s\in\MS_n$ (possibly depending on $X_i$'s and $Z_i$'s), and estimators $\{\hat{\sigma}_i\}$ such that uniformly over this class, (i) Assumptions A1$'$, A4, A5$'$, and A11 are satisfied (where $V(s)$, $\hat{V}(s)$, and $A_n$ are defined in (\ref{eq: variance definition multivariate}), (\ref{eq: variance estimated definition multivariate}), and (\ref{eq: growth condition multivariate}), respectively), and (ii) either Assumption A2$'$ or A3 is satisfied. The following theorem shows that the test in the multivariate nonparametric model controls asymptotic size.


\begin{theorem}[Size properties in the multivariate nonparametric model]\label{thm: nonparametric model}
Let $P=PI$, $OS$, or $SD$. Let $\mathcal{M}_{NP,0}$ denote the set of all models $M\in\mathcal{M}_{NP}$ satisfying $\MH_0$. Then 
$$
\inf_{M\in\mathcal{M}_{NP,0}}\Pr_M(T\leq c_{1-\alpha}^P)\geq 1-\alpha+o(1)\text{ as }n\rightarrow \infty.
$$
In addition, let $\mathcal{M}_{NP,00}$ denote the set of all models $M\in\mathcal{M}_{NP,0}$ such that $f(x)= C$ for some constant $C$ and all $x\in\RR$. Then
$$
\sup_{M\in\mathcal{M}_{NP,00}}\Pr_M(T\leq c_{1-\alpha}^P)=1-\alpha+o(1)\text{ as }n\rightarrow \infty.
$$
\end{theorem}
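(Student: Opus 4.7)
The proof would follow the same high-level strategy as Theorem \ref{thm: size}, but the essential novelty to handle is that under $\MH_0$ the inequality $\Ep[b(s)]\leq 0$ need no longer be valid, because for a pair $(i,j)$ with $X_i<X_j$ but $Z_i\neq Z_j$ monotonicity says nothing about the ordering of $f(X_i,Z_i)$ and $f(X_j,Z_j)$. The plan is therefore to decompose, under $\MH_0$,
\[
b(s) \;=\; B(s)+N(s), \qquad B(s)=(1/2)\!\!\sum_{1\leq i,j\leq n}\!\!(f(X_i,Z_i)-f(X_j,Z_j))\sign(X_j-X_i)Q(X_i,X_j,Z_i,Z_j,s),
\]
with $N(s)$ the analogous expression in $\eps_i-\eps_j$, and to show two things: (a) $\max_{s\in\MS_n}B(s)/\sqrt{V(s)}=o_p(1/\sqrt{\log p})$ uniformly over $\mathcal{M}_{NP,0}$; (b) the conditional distribution of $\max_{s\in\MS_n}N(s)/\sqrt{V(s)}$ given $\{X_i,Z_i\}$ can be coupled, in Kolmogorov distance, with the conditional distribution of the bootstrap statistic $T^{\star}$ up to an $o_p(1/\sqrt{\log p})$ error, again uniformly over $\mathcal{M}_{NP,0}$. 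Combined with anti-concentration of the latter distribution, these two facts yield the first claim.

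For (a) I would write, for each pair $(i,j)$, $f(X_i,Z_i)-f(X_j,Z_j)=[f(X_i,Z_i)-f(X_j,Z_i)]+[f(X_j,Z_i)-f(X_j,Z_j)]$. Under $\MH_0$ the first bracket has the same sign as $X_i-X_j$, so its product with $\sign(X_j-X_i)Q(\cdots)\geq 0$ is nonpositive, and its contribution to $B(s)$ is $\leq 0$ after summation. For the second bracket, A\ref{as: multivariate nonparametric model}-(ii) gives $|f(X_j,Z_i)-f(X_j,Z_j)|\leq C|Z_i-Z_j|$, and the factor $\bar{K}((Z_i-z(s))/\ell(s))\bar{K}((Z_j-z(s))/\ell(s))$ in $Q$ restricts the support to pairs with $|Z_i-Z_j|\leq C'\ell(s)$. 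Hence $B(s)\leq C\ell(s)\sum_{i,j}Q(X_i,X_j,Z_i,Z_j,s)$, and dividing by $\sqrt{V(s)}$ and invoking A\ref{as: multivariate nonparametric model}-(i) gives the required $o_p(1/\sqrt{\log p})$ bound uniformly in $s$.

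For (b) I would repeat the program of Theorem \ref{thm: size}: condition on $\{X_i,Z_i\}$, write $N(s)/\sqrt{V(s)}$ as a linear combination of the $\eps_i$'s with deterministic coefficients, and apply the high-dimensional Gaussian approximation and bootstrap comparison results of \cite{ChernozhukovChetverikov12} and \cite{ChernozhukovKato2011}. The delicate point is that one cannot use the growth condition of Assumption A\ref{as: growth condition} here, because under the multivariate bandwidth regime needed for (a) ($nh\ell^{d+2}=o_p(1/\log p)$) one has $1/(nh\ell^d)$ that is not small enough to satisfy $nA_n^4(\log(pn))^7=o_p(1)$ once $d\geq 2$. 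This is precisely why A1 and A5 are replaced by A1$'$ and A5$'$: the sub-exponential tail bound in A1$'$ lets one invoke the sharper version of the high-dimensional CLT, reducing the polylog loss from $(\log pn)^7$ to $(\log pn)^{7/2}$, so that A5$'$-(i) suffices. Once (b) is in hand, the bias bound from (a) permits replacing $N(s)$ by $b(s)$, and it also ensures that the one-step and step-down selection sets $\MS_n^{OS}$ and $\MS_n^{SD}$ contain all $s$ for which $B(s)/\sqrt{V(s)}$ and the drift of $N(s)/\sqrt{V(s)}$ are non-negligible, so the validity proofs of those procedures from Theorem \ref{thm: size} carry over verbatim. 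The second claim is immediate: when $f\equiv C$, $B(s)\equiv 0$ identically, so the inequality in (a) becomes an equality, and the coupling in (b) together with anti-concentration forces the size to converge to $\alpha$.

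The main obstacle I anticipate is the bias control in step (a), and more specifically finding that A\ref{as: multivariate nonparametric model}-(i) is indeed the right high-level hypothesis. In the univariate case $B(s)\leq 0$ identically under $\MH_0$ and can simply be discarded; here it enters through the $z$-bandwidth $\ell(s)$, and must be shown to be smaller than the concentration scale $1/\sqrt{\log p}$ of the max statistic, not merely smaller than the $O(1)$ scale of a single coordinate. Engineering the interplay between the bandwidths $h(s)$ and $\ell(s)$ so that both A\ref{as: multivariate nonparametric model}-(i) and A5$'$-(i) hold simultaneously over a set $\MS_n$ large enough to retain adaptivity is the real content of extending Theorem \ref{thm: size} to the multivariate setting.
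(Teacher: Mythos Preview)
Your proposal is correct and follows essentially the same approach as the paper: decompose the bias $B(s)$ using monotonicity in $x$ plus a Lipschitz-in-$z$ remainder of order $\ell(s)$, control it via Assumption A\ref{as: multivariate nonparametric model}-(i) at the $o_p(1/\sqrt{\log p})$ scale, and rerun the Theorem~\ref{thm: size} machinery with A1$'$ and A5$'$ replacing A1 and A5 so that the Gaussian approximation (case (i) of Lemma~\ref{lem: gaussian approximation original}) goes through. The one technical point the paper makes explicit that you leave implicit is the modification of Lemma~\ref{lem: conditional and unconditional quantiles 2}: under A1$'$ the sub-exponential tails give $\Ep[\max_i\varepsilon_i^4]\leq C(\log n)^4$ rather than $Cn$, which is what rescues the bound $(\log p)^2\Delta_\Sigma=o_p(1)$ under the weaker growth condition A5$'$.
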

\begin{remark}
(i) The result of this theorem is new and I am not aware of any similar or related result in the literature. Here I briefly comment on difficulties arising if one tries to obtain a result like that in Theorem \ref{thm: nonparametric model} by applying proof techniques that were previously used in the literature for the model where $Z$ is absent. The approach of \cite{GSV2000} consists of first providing a Gaussian coupling (strong approximation) for the whole process $\{b(s)/\hat{V}(s):s\in\MS_n\}$ and then employing results of the extreme value theory of Gaussian processes with one-dimensional domain (see, for example, \cite{Leadbetter_book} for a detailed description of these results) to derive a limit distribution of suprema of these processes. When $Z$ is present, however, one has to apply the results of the extreme value theory for Gaussian processes with multi-dimensional domain (these processes are refered in the literature as Gaussian random fields); see \cite{Piterbarg_book}. Although there do exist important applications of this theory in econometrics (see, for example, \cite{LLW2009}), it is not clear how to apply it in the setting like that studied in this section where the covariance structure of the process $\{\{b(s)/\hat{V}(s):s\in\MS_n\}$ is rather complicated, which is the case when kernel weighting functions are used with many different bandwidth values. \cite{HallHeckman2000} take a different approach: they first provide a Gaussian coupling and then use integration by parts of stochastic integrals to show validity of their critical values. Even when $Z$ is absent, they proved their results only when $X_i$'s are equidistant on some interval, but, more importantly, it is also unclear how to generalize their techniques based on integration by parts to multi-dimensional setting.

\noindent
(ii) There are other possible notions of monotonicity in the multivariate model (\ref{eq: nonparametric model}). Assume, for simplicity, that both $X$ and $Z$ are scalars. Then one might want to test the null hypothesis, $\mathcal{H}_0$, that $f(x_2,z_2)\geq f(x_1,z_1)$ for all $x_1,x_2,z_1$, and $z_2$ satisfying $x_2\geq x_1$ and $z_2\geq z_1$ against the alternative, $\mathcal{H}_a$, that there exist $x_1,x_2,z_1$, and $z_2$ such that $x_2\geq x_1$ and $z_2\geq z_1$ but $f(x_2,z_2)<f(x_1,z_1)$. To test this $\mathcal{H}_0$, one can consider the following test functions:
$$
b(s)=(1/2)\sum_{1\leq i,j\leq n}(Y_i-Y_j)\text{sign}(X_j-X_i)I\{Z_j\geq Z_i\}Q(X_i,X_j,Z_i,Z_j,s)
$$
where the function $Q$ satisfies $Q(x_1,x_2,z_1,z_2,s)\geq 0$ for all $x_1,x_2,z_1,z_2$, and $s$. Note that there is no bias term present in the test function, and so one does not have to consider local-in-$z$ weighting functions. The theory for this problem can be obtained along the same line as that used in Section \ref{sec: theory under high level conditions}.

\noindent
(iii) The same techniques that I use to test monotonicity in this paper can also be used to test other hypotheses that are of interest in economic theory. For example, one can test that the function $f(\cdot,\cdot)$ has \textit{increasing differences} or \textit{super-modularity} property. It is said that the function $f(\cdot,\cdot)$ has increasing differences or super-modularity property if for all $x_1<x_2$, $f(x_2,z)-f(x_1,z)$ is increasing in $z$. This property plays an important role in Robust comparative statics. Specifically, one can obtain a testing procedure based on the following test functions:
$$
b(s)=\sum_{1\leq j,k,l,m\leq n}\left((Y_j-Y_k)-(Y_l-Y_m)\right)Q(X_j,X_k,X_l,X_m,Z_j,Z_k,Z_l,Z_m,s)
$$
where the function $Q$ can take the form of kernel weighting function:
\begin{align*}
&Q(x_1,x_2,x_3,x_4,z_1,z_2,z_3,z_4,s)\\
&=K\left(\frac{x_1-x_r(s)}{h(s)}\right)K\left(\frac{x_2-x_l(s)}{h(s)}\right)K\left(\frac{x_3-x_r(s)}{h(s)}\right)K\left(\frac{x_4-x_l(s)}{h(s)}\right)\\
&\times K\left(\frac{z_1-z_r(s)}{h(s)}\right)K\left(\frac{z_2-z_r(s)}{h(s)}\right)K\left(\frac{z_3-z_l(s)}{h(s)}\right)K\left(\frac{z_4-z_l(s)}{h(s)}\right)
\end{align*}
and $s=(x_l,x_r,z_l,z_r,h)$ with $x_r>x_l$ and $z_r>z_l$ and some bandwidth value $h$.
\qed
\end{remark}

\subsection{Partially Linear Model}
In this model, additional covariates enter
the regression function as additively separable linear form. Specifically, the model is given by
\begin{equation}\label{eq: partially linear model}
Y=f(X)+Z^{T}\beta+\eps
\end{equation}
where $Y$ is a scalar dependent random variable, $X$ a scalar independent random variable, $Z$ a vector in $\RR^d$ of additional independent random variables that should be controlled for, $f(\cdot)$ an unknown function, $\beta$ a vector in $\RR^d$ of unknown parameters, and $\eps$ an unobserved scalar random variable satisfying $\Ep[\eps|X,Z]=0$ almost surely. As in Section \ref{sec: theory under high level conditions}, the problem is to test the
null hypothesis, $\MH_{0}$, that $f(\cdot)$ is nondecreasing against the
alternative, $\MH_{a}$, that there are $x_1$ and $x_2$ such that
$x_1<x_2$ but $f(x_1)>f(x_2)$. The decision is to be made based on the i.i.d. sample of size $n$, $\{X_i,Z_i,Y_i\}_{1\leq i\leq n}$ from the distribution of $(X,Z,Y)$. The regression function $f(\cdot)$ is assumed to be smooth but I do not impose any parametric structure on it.

An advantage of the partially linear model over the
fully nonparametric model is that it does not suffer from the curse
of dimensionality, which decreases the power of the test and may be a
severe problem if the researcher has many additional covariates to
control for. On the other hand, the partially linear model does not
allow for heterogeneous effects of the factor $X$, which might be
restrictive in some applications. It is necessary to have in mind
that the test obtained for the partially linear model will be inconsistent
if this model is misspecified.

Let me now describe the test. The main idea is to estimate $\beta$ by some $\hat{\beta}_n$ and to apply the methods described in Section \ref{sec: test} for the dataset $\{X_i,Y_i-Z_i^T\hat{\beta}_n\}$.
For example,
one can take an estimator of \cite{Robinson88}, which is given by
$$
\hat{\beta}_n=\left(\sum_{i=1}^{n}\hat{Z}_{i}\hat{Z}_{i}^{T}\right)^{-1}\left(\sum_{i=1}^{n}\hat{Z}_{i}\hat{Y}_{i}\right)
$$
where $\hat{Z}_{i}=Z_{i}-\hat{E}[Z|X=X_{i}]$, $\hat{Y}_{i}=Y_{i}-\hat{E}[Y|X=X_{i}]$,
and $\hat{E}[Z|X=X_{i}]$ and $\hat{E}[Y|X=X_{i}]$ are nonparametric estimators of $E[Z|X=X_{i}]$ and $E[Y|X=X_{i}]$,
respectively. Define $\tilde{Y}_{i}=Y_{i}-Z_{i}^T\hat{\beta}_n$, and let the test statistic be $T=T(\{X_i,\tilde{Y}_i\},\{\hat{\sigma}_i\},\MS_n)$ where $T(\cdot,\cdot,\cdot)$ is defined in (\ref{eq: test statistic definition}) and estimators $\hat{\sigma}_i$ of $\sigma_i=(\Ep[\eps_i^2|X_i])^{1/2}$ (here $\eps_i=Y_i-f(X_i)-Z_i^T\beta$) satisfy either $\hat{\sigma}_i=\hat{\eps}_i=Y_i-\hat{f}(X_i)-Z_i^T\hat{\beta}_n$ (here $\hat{f}(X_i)$ is some estimator of $f(X_i)$, which is uniformly consistent over $i=\overline{1,n}$) or $\hat{\sigma}_i$ is some uniformly consistent estimator of $\sigma_i$. The critical value for the test is simulated by one of the methods (plug-in, one-step, or step-down) described in Section \ref{sec: test} using the data $\{X_i,\tilde{Y}_i\}$, estimators $\{\hat{\sigma}_i\}$, and the set of weighting functions $\MS_n$. As in Section \ref{sec: test}, let $c_{1-\alpha}^{PI}$, $c_{1-\alpha}^{OS}$, and $c_{1-\alpha}^{SD}$ denote the plug-in, one-step, and step-down critical values, respectively.

Let $C_5>0$ be some constant. To obtain results for partially linear models, I will impose the following condition.
\begin{assumption}\label{as: partially linear model}
(i) $\Vert Z\Vert\leq C_5$ almost surely, (ii) $\|\hat{\beta}_n-\beta\|=O_p(n^{-1/2})$, and (iii) uniformly over all $s\in\MS_n$, $\sum_{1\leq i,j\leq n}Q(X_i,X_j,s)/V(s)^{1/2}=o_p(\sqrt{n/\log p})$.
\end{assumption}
\noindent
The assumption that the vector $Z$ is bounded is a mild regularity condition and can be easily relaxed. Further,
\cite{Horowitz_book} provides a set of conditions so that $\|\hat{\beta}_n-\beta\|=O_p(n^{-1/2})$ when $\hat{\beta}_n$ is the Robinson's estimator. Finally, it follows from the proof of Proposition \ref{lem: kernel function} that under Assumption A\ref{as: design difficult}, Assumption A\ref{as: partially linear model}-iii is satisfied if kernel weighting functions are used as long as $h_{\max}=o_p(1/\log p)$ and $h_{\min}\geq (\log n)^2/(Cn)$ w.p.a.1. for some $C>0$.
In addition, I will need to modify Assumption A\ref{as: function estimation} to account for the presence of the vector of additional covariates $Z$:

\vspace{.2cm}
\noindent
\textbf{A2$''$} \textit{(i) $\hat{\sigma}_i=Y_i-\hat{f}(X_i)-Z_i^T\hat{\beta}_n$ for all $i=\overline{1,n}$ and (ii) $\hat{f}(X_i)-f(X_i)=o_p(n^{-\kappa_1})$ uniformly over $i=\overline{1,n}$.}
\vspace{.04cm}

Let $\mathcal{M}_{PL}$ be a class of models given by equation (\ref{eq: partially linear model}), function $f(\cdot)$, parameter $\beta$, joint distribution of $X$, $Z$, and $\varepsilon$ satisfying $\Ep[\varepsilon|X,Z]=0$ almost surely, weighting functions $Q(\cdot,\cdot,s)$ for $s\in\MS_n$ (possibly depending on $X_i$'s), an estimator $\hat{\beta}_n$, and estimators $\{\hat{\sigma}_i\}$ such that uniformly over this class, (i) Assumptions A\ref{as: disturbances}, A\ref{as: V}, A\ref{as: growth condition}, and A\ref{as: partially linear model} are satisfied (where $V(s)$, $\hat{V}(s)$, and $A_n$ are defined in (\ref{eq: variance definition}), (\ref{eq: variance estimated definition}), and (\ref{eq: sensitivity parameter}), respectively), and (ii) either Assumption A\ref{as: function estimation}$''$ or A\ref{as: sigma} is satisfied.
The size properties of the test are given in the following theorem.
\begin{theorem}[Size properties in the partially linear model]\label{thm: partially linear model}
Let $P=PI$, $OS$, or $SD$. Let $\mathcal{M}_{PL,0}$ denote the set of all models $M\in\mathcal{M}_{PL}$ satisfying $\MH_0$. Then
$$
\inf_{M\in\mathcal{M}_{PL,0}}\Pr_M(T\leq c_{1-\alpha}^P)\geq 1-\alpha+o(1)\text{ as }n\rightarrow \infty.
$$
In addition, let $\mathcal{M}_{PL,00}$ denote the set of all models $M\in\mathcal{M}_{PL,0}$ such that $f(x)=C$ for some constant $C$ and all $x\in\RR$. Then
$$
\sup_{M\in\mathcal{M}_{PL,00}}\Pr_M(T\leq c_{1-\alpha}^P)=1-\alpha+o(1)\text{ as }n\rightarrow \infty.
$$
\end{theorem}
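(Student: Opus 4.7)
The plan is to reduce the theorem to Theorem \ref{thm: size} applied to the oracle data $\check Y_i := Y_i - Z_i^T\beta = f(X_i) + \varepsilon_i$, which satisfies the univariate model of Section \ref{sec: theory under high level conditions} with the same conditional variances $\sigma_i^2 = \Ep[\varepsilon_i^2|X_i]$, so that $V(s)$ and $\hat V(s)$ of A\ref{as: V} coincide in the two setups. Because the bootstrap innovations $Y_i^\star = \hat\sigma_i\epsilon_i$ do not involve $\hat\beta_n$, the plug-in bootstrap distribution computed from $\{X_i,\tilde Y_i\}$ is literally identical to the oracle plug-in distribution, so $c^{PI}_{1-\alpha}$ coincides with its oracle counterpart exactly.

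The key algebraic step is the decomposition
\begin{equation*}
b(\{X_i,\tilde Y_i\},s) = b(\{X_i,\check Y_i\},s) - R(s),\quad R(s) := \tfrac12\sum_{1\leq i,j\leq n}(Z_i-Z_j)^T(\hat\beta_n-\beta)\sign(X_j-X_i)Q(X_i,X_j,s).
\end{equation*}
Using $\|Z\|\leq C_5$ from A\ref{as: partially linear model}-i, $\|\hat\beta_n-\beta\|=O_p(n^{-1/2})$ from A\ref{as: partially linear model}-ii, and the design control $\sum_{i,j}Q(X_i,X_j,s)/V(s)^{1/2}=o_p(\sqrt{n/\log p})$ from A\ref{as: partially linear model}-iii, I plan to show that $\sup_{s\in\MS_n}|R(s)|/V(s)^{1/2} = o_p(1/\sqrt{\log p})$, with the same bound inherited for $\hat V(s)^{1/2}$ in the denominator via A\ref{as: V}. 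Writing $T^{\mathrm{orc}} := \max_{s\in\MS_n}b(\{X_i,\check Y_i\},s)/\hat V(s)^{1/2}$, this gives $T = T^{\mathrm{orc}} + o_p(1/\sqrt{\log p})$, and the one-step and step-down selection sets for the partially linear test will agree with their oracle analogues except for elements within $o_p(1/\sqrt{\log p})$ of the relevant thresholds.

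Since Theorem \ref{thm: size} applied to the oracle triple $(T^{\mathrm{orc}}, c^{P,\mathrm{orc}}_{1-\alpha}, \MS_n^{P,\mathrm{orc}})$ delivers both claimed inequalities, the remaining task is to show that the $o_p(1/\sqrt{\log p})$ perturbation shifts each rejection probability by only $o(1)$. This will follow from the Nazarov anti-concentration inequality for maxima of Gaussian vectors, invoked through the \cite{ChernozhukovChetverikov12}/\cite{ChernozhukovKato2011} couplings already employed in the proof of Theorem \ref{thm: size}, together with the fact that the critical values are of order $\sqrt{\log p}$, so the perturbation is negligible on that scale. The hard part will be the step-down iteration: the compounded effect of the $o_p(1/\sqrt{\log p})$ boundary mismatch across all (finitely many) iterations must remain $o(1)$. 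I would control this using monotonicity of the sequence $(c^l_{1-\gamma_n})_{l\geq 1}$ together with an anti-concentration estimate applied at each level, which is a minor adaptation of the step-down argument in the proof of Theorem \ref{thm: size}.
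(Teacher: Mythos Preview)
Your proposal is correct and follows essentially the same route as the paper: reduce to the oracle data $\check Y_i=Y_i-Z_i^T\beta$, bound the remainder $\sup_{s}|R(s)|/V(s)^{1/2}=o_p(1/\sqrt{\log p})$ via A\ref{as: partially linear model}, and then absorb this perturbation using the Gaussian anti-concentration bound (Lemma \ref{lem: anticoncentration}) before invoking the argument of Theorem \ref{thm: size}. The paper's proof is terser---it does not separately track the step-down iteration but simply notes that, in view of Lemma \ref{lem: anticoncentration}, the $o_p(1/\sqrt{\log p})$ shift propagates harmlessly through the entire chain of inequalities in the proof of Theorem \ref{thm: size} (since $\MS_n^R$ depends only on $f(s)$, which is unchanged).
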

\begin{remark}\label{com: separately additive model}
The result of Theorem \ref{thm: partially linear model} can be extended to cover a general separately additive models. Specifically, assume that the data come from the model
\begin{equation}\label{eq: separately additive model}
Y=f(X)+g(Z)+\varepsilon
\end{equation}
where $g(\cdot)$ is some unknown smooth function and all other notation is the same as above. Then one can test $\MH_0$ that $f(\cdot)$ is nondecreasing against the alternative $\MH_a$ that there are $x_1$ and $x_2$ such that $x_1<x_2$ but $f(x_1)>f(x_2)$ using a method similar to that used above with the exception that now $\tilde{Y}_i=Y_i-\hat{g}(Z_i)$ where $\hat{g}(Z_i)$ is some nonparametric estimator of $g(Z_i)$. 
Specifically, consider the following conditions:

\vspace{.2cm}
\noindent
\textbf{A2$'''$} \textit{(i) $\hat{\sigma}_i=Y_i-\hat{f}(X_i)-\hat{g}(Z_i)$ for all $i=\overline{1,n}$ and (ii) $\hat{f}(X_i)-f(X_i)=o_p(n^{-\kappa_1})$ uniformly over $i=\overline{1,n}$.}
\vspace{.04cm}

\vspace{.2cm}
\noindent
\textbf{A12$'$} \textit{(i) $\max_{1\leq i\leq n}|\hat{g}(Z_i)-g(Z_i)|=O_p(\psi_n^{-1})$ for some sequence $\psi_n\rightarrow \infty$, and (ii) uniformly over all $s\in\MS_n$, $\sum_{1\leq i,j\leq n}Q(X_i,X_j,s)/V(s)^{1/2}=o_p(\sqrt{\psi_n/\log p})$.}
\vspace{.04cm}

Let $\mathcal{M}_{SA}$, $\mathcal{M}_{SA,0}$, and $\mathcal{M}_{SA,00}$ denote sets of models that are defined as $\mathcal{M}_{PL}$, $\mathcal{M}_{PL,0}$, and $\mathcal{M}_{PL,00}$ with (\ref{eq: partially linear model}) and Assumptions A\ref{as: partially linear model} and A2$''$ replaced by (\ref{eq: separately additive model}) and Assumptions A12$'$ and A2$'''$.
Then Theorem \ref{thm: partially linear model} applies with $\mathcal{M}_{PL}$, $\mathcal{M}_{PL,0}$, and $\mathcal{M}_{PL,00}$ replaced by $\mathcal{M}_{SA}$, $\mathcal{M}_{SA,0}$, and $\mathcal{M}_{SA,00}$. This result can be proven from the argument similar to that used to prove Theorem \ref{thm: partially linear model}.\qed
\end{remark}

\section{Models with Endogenous Covariates}\label{sec: endogenous covariates}
Empirical studies in economics often contain endogenous covariates. Therefore, it is important to extend the results on testing monotonicity obtained in this paper to cover this possibility. Allowing for endogenous covariates in nonparametric settings like the one considered here is challenging and several approaches have been proposed in the literature. In this paper, I consider the approach proposed in \cite{NPV99}. In their model,
\begin{align}
&Y=f(X)+W,\label{eq: endogenous model 1}\\
&X=\lambda(U)+Z\label{eq: endogenous model 2}
\end{align}
where $Y$ is a scalar dependent variable, $X$ a scalar covariate, $U$ a vector in $\mathbb{R}^d$ of instruments, $f(\cdot)$ and $\lambda(\cdot)$ unknown functions, and $W$ and $Z$ unobserved scalar random variables satisfying
\begin{equation}\label{eq: endogenous model 3}
\Ep[W|U,Z]=\Ep[W|Z]\text{ and }\Ep[Z|U]=0
\end{equation}
almost surely. In this setting, $X$ is endogenous in the sense that equation $\Ep[W|X]=0$ almost surely does not necessarily hold. The problem is to test the null hypothesis, $\MH_0$, that $f(\cdot)$ is nondecreasing against the alternative, $\MH_a$, that there are $x_1$ and $x_2$ such that $x_1<x_2$ but $f(x_1)>f(x_2)$. The decision is to be made based on the i.i.d. sample of size $n$, $\{X_i,U_i,Y_i\}_{1\leq i\leq n}$ from the distribution of $(X,U,Y)$.

It is possible to consider a more general setting where the function $f$ of interest depends on $X$ and $U_1$, that is $Y=f(X,U_1)+\varepsilon$ and $U=(U_1,U_2)$ but I refrain from including additional covariates for brevity. 

An alternative to the model defined in (\ref{eq: endogenous model 1})-(\ref{eq: endogenous model 3}) is the model defined by (\ref{eq: endogenous model 1}) and
\begin{equation}\label{eq: endogenous model alternative}
\Ep[W|U]=0
\end{equation}
almost surely. \cite{NPV99} noted that neither model is more general than the other one. The reason is that it does not follow from (\ref{eq: endogenous model 3}) that (\ref{eq: endogenous model alternative}) holds, and it does not follow from (\ref{eq: endogenous model alternative}) that (\ref{eq: endogenous model 3}) holds. Both models have been used in the empirical studies. The latter model have been studied in \cite{NP03}, \cite{HH05}, \cite{BCK07}, and \cite{DFR12}, among many others.

Let me now get back to the model defined by (\ref{eq: endogenous model 1})-(\ref{eq: endogenous model 3}). A key observation of \cite{NPV99} is that in this model,
$$
\Ep[Y|U,Z]=\Ep[f(X)|U,Z]+\Ep[W|U,Z]=\Ep[f(X)|U,X]+\Ep[W|Z]=f(X)+g(Z)
$$
where $g(z)=\Ep[W|Z=z]$. This implies that
\begin{equation}\label{eq: additively separable representation}
\Ep[Y|X,Z]=f(X)+g(Z).
\end{equation}
Therefore, the regression function of $Y$ on $X$ and $Z$ is separately additive in $X$ and $Z$, and so, if $Z_i=X_i-\lambda(U_i)$'s were known, both $f(\cdot)$ and $g(\cdot)$ could be estimated by one of the nonparametric methods suitable for estimating separately additive models. One particularly convenient method to estimate such models is a series estimator. Further, note that even though $Z_i$'s are unknown, they can be consistently estimated from the data as residuals from the nonparametric regression of $X$ on $U$. Then one can estimate both $f(\cdot)$ and $g(\cdot)$ by employing a nonparametric method for estimating separately additive models with $Z_i$'s replaced by their estimates.  Specifically, let $\hat{Z_i}=X_i-\hat{\Ep}[X|U=U_i]$ where $\hat{\Ep}[X|U=U_i]$ is a consistent nonparametric estimator of $\Ep[X|U=U_i]$. Further, for an integer $L>0$, let $r^{L,f}(x)=(r_{1L}^f(x),...,r_{LL}^f(x))'$ and $r^{L,g}(z)=(r_{1L}^g(z),...,r_{LL}^g(z))'$ be vectors of approximating functions for $f(\cdot)$ and $g(\cdot)$, respectively, so that
$$
f(x)\approx r^{L,f}(x)'\beta_f\text{ and }g(z)\approx r^{L,g}(z)'\beta_g
$$
for $L$ large enough where $\beta_f$ and $\beta_g$ are vectors in $\mathbb{R}^L$ of coefficients. In addition, let $r^L(x,z)=(r^{L,f}(x)',r^{L,g}(z)')'$ and $R=(r^L(X_1,\hat{Z}_1),...,r^L(X_n,\hat{Z}_n))$. Then the OLS estimator of $\beta_f$ and $\beta_g$ is
$$
\left(\begin{array}{c}
\hat{\beta}_{f}\\
\hat{\beta}_{g}
\end{array}\right)=\left(R'R\right)^{-1}\left(R'Y_1^n\right)
$$
where $Y_1^n=(Y_1,...,Y_n)'$, and the series estimators $\hat{f}(x)$ and $\hat{g}(x)$ of $f(x)$ and $g(z)$ are $r^{L,f}(x)'\hat{\beta}_f$ and $r^{L,g}(z)'\hat{\beta}_g$ for all $x$ and $z$, respectively.

Now, it follows from (\ref{eq: additively separable representation}) that
$$
Y=f(X)+g(Z)+\varepsilon
$$
where $\Ep[\varepsilon|X,Z]=0$ almost surely. This is exactly the model discussed in Comment \ref{com: separately additive model}, and since I have an estimator of $g(\cdot)$, it is possible to test $\MH_0$ using the same ideas as in Comment \ref{com: separately additive model}. Specifically, let $\tilde{Y}_i=Y-\hat{g}(\hat{Z}_i)$ and apply a test described in Section \ref{sec: test} with the data $\{X_i,\tilde{Y}_i\}_{1\leq i\leq n}$; that is, let the test statistic be $T=T(\{X_i,\tilde{Y}_i\},\{\hat{\sigma}_i\},\MS_n)$ where the function $T(\cdot,\cdot,\cdot)$ is defined in (\ref{eq: test statistic definition}) and estimators $\hat{\sigma}_i$ of $\sigma_i=(\Ep[\varepsilon_i^2|X_i])^{1/2}$ (here $\eps_i=Y_i-f(X_i)-g(Z_i)$) satisfy either $\hat{\sigma}_i=\hat{\eps}_i=Y_i-\hat{f}(X_i)-\hat{g}(\hat{Z}_i)$ or $\hat{\sigma}_i$ is some uniformly consistent estimator of $\sigma_i$.  Let $c_{1-\alpha}^{PI}$, $c_{1-\alpha}^{OS}$, and $c_{1-\alpha}^{SD}$ denote the plug-in, one-step, and step-down critical values, respectively, that are obtained as in Section \ref{sec: test} using the data $\{X_i,\tilde{Y}_i\}$, estimators $\{\hat{\sigma}_i\}$, and the set of weighting functions $\MS_n$.

To obtain results for this testing procedure, I will use the following modifications of Assumptions A2$'''$ and A12$'$:

\vspace{.2cm}
\noindent
\textbf{A2$''''$} \textit{(i) $\hat{\sigma}_i=Y_i-\hat{f}(X_i)-\hat{g}(\hat{Z_i})$ for all $i=\overline{1,n}$ and (ii) $\hat{f}(X_i)-f(X_i)=o_p(n^{-\kappa_1})$ uniformly over $i=\overline{1,n}$.}
\vspace{.04cm}

\vspace{.2cm}
\noindent
\textbf{A12$''$} \textit{(i) $\max_{1\leq i\leq n}|\hat{g}(\hat{Z_i})-g(\hat{Z_i})|=O_p(\psi_n^{-1})$ for some sequence $\psi_n\rightarrow \infty$, and (ii) uniformly over all $s\in\MS_n$, $\sum_{1\leq i,j\leq n}Q(X_i,X_j,s)/V(s)^{1/2}=o_p(\sqrt{\psi_n/\log p})$.}
\vspace{.04cm}

Assumption A2$''''$ is a simple extension of Assumption A2$'''$ that takes into account that $Z_i$'s have to be estimated by $\hat{Z}_i$'s. Further, Assumption A12$''$ requires that $\max_{1\leq i\leq n}|\hat{g}(\hat{Z}_i)-g(\hat{Z}_i)|=O_p(\psi_n^{-1})$ for some sequence $\psi_n\rightarrow \infty$. For the estimator $\hat{g}(\cdot)$ of $g(\cdot)$ described above, this requirement follows from Lemma 4.1 and Theorem 4.3 in \cite{NPV99} who also provide certain primitive conditions for this requirement to hold. Specific choices of $\psi_n$ depend on how smooth the functions $f(\cdot)$ and $g(\cdot)$ are and how the number of series terms, $L$, is chosen.

Let $\mathcal{M}_{EC}$ be a class of models given by equations (\ref{eq: endogenous model 1})-(\ref{eq: endogenous model 3}), functions $f(\cdot)$ and $\lambda(\cdot)$, joint distribution of $X$, $U$, $W$, and $Z$, weighting functions $Q(\cdot,\cdot,s)$ for $s\in\MS_n$ (possibly depending on $X_i$'s), an estimator $\hat{g}(\cdot)$ of $g(\cdot)$, and estimators $\{\hat{\sigma}_i\}$ of $\{\sigma_i\}$ such that uniformly over this class, (i) Assumptions A\ref{as: disturbances}, A\ref{as: V}, A\ref{as: growth condition}, and A12$''$ are satisfied (where $V(s)$, $\hat{V}(s)$, and $A_n$ are defined in (\ref{eq: variance definition}), (\ref{eq: variance estimated definition}), and (\ref{eq: sensitivity parameter}), respectively), and (ii) either Assumption A2$''''$ or A3 is satisfied. 
 
\begin{theorem}[Size properties in the model with endogenous covariates]\label{thm: model with endogenous covariates}
Let $P=PI$, $OS$, or $SD$. Let $\mathcal{M}_{EC,0}$ denote the set of all models $M\in\mathcal{M}_{EC}$ satisfying $\MH_0$. Then
$$
\inf_{M\in\mathcal{M}_{EC,0}}\Pr_M(T\leq c_{1-\alpha}^P)\geq 1-\alpha+o(1)\text{ as }n\rightarrow \infty.
$$
In addition, let $\mathcal{M}_{EC,00}$ denote the set of all models $M\in\mathcal{M}_{EC,0}$ such that $f\equiv C$ for some constant $C$. Then
$$
\sup_{M\in\mathcal{M}_{EC,00}}\Pr_M(T\leq c_{1-\alpha}^P)=1-\alpha+o(1)\text{ as }n\rightarrow \infty.
$$
\end{theorem}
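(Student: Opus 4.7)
The plan is to reduce the problem to the univariate framework of Section \ref{sec: theory under high level conditions} so that Theorem \ref{thm: size} can be invoked directly, following the template of Theorem \ref{thm: partially linear model} and the comment on separately additive models. Using the additively separable representation (\ref{eq: additively separable representation}), write
\[
\tilde Y_i = Y_i - \hat g(\hat Z_i) = f(X_i) + \varepsilon_i + \eta_i, \qquad \eta_i := g(Z_i) - \hat g(\hat Z_i),
\]
where $\max_i|\eta_i|=O_p(\psi_n^{-1})$ by Assumption A12$''$. Let $Y_i^{\circ} := f(X_i)+\varepsilon_i$ and denote by $T^{\circ}$, $b^{\circ}(s)$, $\MS_n^{OS,\circ}$, $\MS_n^{SD,\circ}$, and $c^{P,\circ}_{1-\alpha}$ the test statistic, test function, selection sets, and critical values that would be obtained if the oracle outcomes $Y_i^{\circ}$ were used in place of $\tilde Y_i$. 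The construction of $\mathcal{M}_{EC}$ guarantees Assumptions A\ref{as: disturbances}, A\ref{as: V}, A\ref{as: growth condition}, and one of A\ref{as: function estimation}/A\ref{as: sigma} for this oracle problem, so Theorem \ref{thm: size} yields $\inf_{M\in\mathcal{M}_{EC,0}}\Pr_M(T^{\circ}\leq c^{P,\circ}_{1-\alpha})\geq 1-\alpha+o(1)$ and the matching equality on $\mathcal{M}_{EC,00}$.

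Next I show that $T$ and the three critical values differ from their oracle counterparts by $o_p(1/\sqrt{\log p})$. Since
\[
b(s)-b^{\circ}(s) = \tfrac{1}{2}\sum_{1\leq i,j\leq n}(\eta_i-\eta_j)\sign(X_j-X_i)Q(X_i,X_j,s),
\]
the triangle inequality gives $|b(s)-b^{\circ}(s)|\leq 2\max_k|\eta_k|\cdot\sum_{i,j}Q(X_i,X_j,s)$. Dividing by $\sqrt{V(s)}$, invoking A12$''$-ii on the weight sum, and then A\ref{as: V} to replace $\sqrt{V(s)}$ by $\sqrt{\hat V(s)}$, I obtain uniformly in $s\in\MS_n$
\[
|b(s)-b^{\circ}(s)|/\sqrt{\hat V(s)} = O_p(\psi_n^{-1})\cdot o_p(\sqrt{\psi_n/\log p}) = o_p(1/\sqrt{\log p}),
\]
so that $|T-T^{\circ}|=o_p(1/\sqrt{\log p})$. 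The bootstrap statistic $T^{\star}$ is built from $Y_i^{\star}=\hat\sigma_i\epsilon_i$ rather than from $\tilde Y_i$, so the plug-in critical value coincides with its oracle analog exactly. For the one-step and step-down critical values, the selection thresholds are functions of $b(s)/\sqrt{\hat V(s)}$ and thus differ from the oracle thresholds by the same $o_p(1/\sqrt{\log p})$ amount; sandwiching $\MS_n^{OS}$ and $\MS_n^{SD}$ between two oracle-type sets with thresholds shifted by $\pm o_p(1/\sqrt{\log p})$ and applying the anti-concentration bounds for Gaussian suprema that underlie Theorem \ref{thm: size} (through the machinery of \cite{ChernozhukovChetverikov12} and \cite{ChernozhukovKato2011}) shows that $c^{OS}_{1-\alpha}$ and $c^{SD}_{1-\alpha}$ are also within $o_p(1/\sqrt{\log p})$ of $c^{OS,\circ}_{1-\alpha}$ and $c^{SD,\circ}_{1-\alpha}$.

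Combining the two approximations and invoking anti-concentration once more, $\Pr_M(T\leq c^{P}_{1-\alpha}) = \Pr_M(T^{\circ}\leq c^{P,\circ}_{1-\alpha}) + o(1)$ uniformly over $\mathcal{M}_{EC,0}$, giving the first display of the theorem, and the same argument on $\mathcal{M}_{EC,00}$ yields the second. The principal obstacle is not the negligibility of the first-stage error $\eta_i$ per se but the fact that $\eta_i$ enters the data-driven selection sets as well as the statistic; the key design element that makes the argument succeed is Assumption A12$''$-ii, calibrated precisely so that $\max_i|\eta_i|\cdot\sum_{i,j}Q(X_i,X_j,s)/\sqrt{V(s)}$ lies at the $o_p(1/\sqrt{\log p})$ scale at which the anti-concentration tools of Theorem \ref{thm: size} operate.
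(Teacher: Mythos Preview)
Your approach is essentially the same as the paper's, whose proof is a single sentence deferring to the argument of Theorem~\ref{thm: partially linear model} and Comment~\ref{com: separately additive model}: reduce to the oracle outcomes $Y_i^{\circ}=f(X_i)+\varepsilon_i$, bound $|T-T^{\circ}|$ by $\max_i|\eta_i|$ times the weight sum from A12$''$(ii), and absorb the resulting $o_p(1/\sqrt{\log p})$ perturbation via the anti-concentration Lemma~\ref{lem: anticoncentration} inside the proof of Theorem~\ref{thm: size}. One minor slip worth noting: A12$''$(i) controls $\max_i|\hat g(\hat Z_i)-g(\hat Z_i)|$, not $\max_i|\eta_i|=\max_i|g(Z_i)-\hat g(\hat Z_i)|$, so the additional piece $|g(Z_i)-g(\hat Z_i)|$ still needs smoothness of $g$ together with uniform consistency of $\hat Z_i$---a point the paper's one-line proof also leaves implicit.
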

\begin{remark}
Around the same time this paper became publicly available, \cite{Gutknecht2013} obtained a test of monotonicity of the function $f(\cdot)$ in the same model (\ref{eq: endogenous model 1})-(\ref{eq: endogenous model 3}). The test in that paper is a special case of the class of tests derived in this paper. Specifically, \cite{Gutknecht2013} obtained a test based on test functions (\ref{eq: GSV modification}). The major difference, however, is that his test is not adaptive because it is based on one non-stochastic bandwidth value.\qed
\end{remark}

\section{Sample Selection Models}\label{sec: sample selection models}

It is widely recognized in econometrics literature that sample selection problems can result in highly misleading inference in otherwise standard regression models. The purpose of this section is to briefly show that the same techniques that are used in the last section to deal with endogeneity issues can also be used to deal with sample selection issues. For concreteness, I consider a nonparametric version of the classical Heckman's sample selection model; see \cite{Heckman79}. The nonparametric version of the Heckman's model was previously analyzed in great generality in \cite{DNV03}. Specifically, I consider the model
\begin{align}
&Y^{*}=f(X)+W,\label{eq: sample selection 1}\\
&Y=DY^{*}\label{eq: sample selection 2}
\end{align}
where $Y^{*}$ is an unobserved scalar dependent random variable, $Y$ a scalar random variable, $X$ a covariate, $D$ a binary selection indicator, $W$ unobserved scalar random variable. The problem in this model arises when $W$ is not independent of $D$. To deal with this problem, let $Z$ denote a vector of random variables that affect selection $D$, and let $p(x,z)=\Ep[D|X=x,Z=z]$ be the propensity score. Further, assume that $Z$ is such that
\begin{equation}\label{eq: sample selection 3}
\Ep[W|X,Z,D=1]=\lambda(p(X,Z))
\end{equation}
where $\lambda(\cdot)$ is some unknown function. This condition is reasonable in many settings; see, in particular, discussion on p.35 of \cite{DNV03}.
I am interested in testing the null hypothesis that the function $f(\cdot)$ in the model (\ref{eq: sample selection 1})-(\ref{eq: sample selection 3}) is weakly increasing against the alternative that it is not weakly increasing based on the random sample $\{X_i,Z_i,Y_i\}_{1\leq i\leq n}$ from the distribution of $(X,Z,Y)$. 

A key observation of \cite{DNV03} is that under (\ref{eq: sample selection 3}), equations (\ref{eq: sample selection 1}) and (\ref{eq: sample selection 2}) imply that
$$
\Ep[Y|X,Z,D=1]=f(X)+\lambda(p(X,Z)),
$$
and so denoting $P=p(X,Z)$,
\begin{equation}\label{eq: sample selection 4}
\Ep[Y|X,P,D=1]=f(X)+\lambda(P).
\end{equation}
Therefore, if $P_i=p(X_i,Z_i)$'s were known, one could estimate both $f(\cdot)$ and $\lambda(\cdot)$ by one of the nonparametric methods suitable for estimating separately additive models applied to the regression of $Y$ on $X$ and $P$ based on the subsample of observations with $D_i=1$. Further, note that even though $P_i$'s are unknown, they can be estimated consistently from the data as the predicted values in the nonparametric regression of $D$ on $X$ and $Z$. Hence, one can estimate both $f(\cdot)$ and $\lambda(\cdot)$ by one of the non-parametric methods suitable for estimating additive models applied to the regression of $Y$ on $X$ and $P$ with $P_i$'s replaced by $\hat{P}_i$'s where $\hat{P}_i=\hat{p}(X_i,Z_i)$ and $\hat{p}(\cdot,\cdot)$ is a nonparametric estimator of $p(\cdot,\cdot)$.

Now, (\ref{eq: sample selection 4}) implies that
$$
Y=f(X)+\lambda(P)+\varepsilon
$$
where $\Ep[\varepsilon|X,P,D=1]=0$ almost surely. This is exactly the model discussed in Comment \ref{com: separately additive model}, and since an estimator $\hat{\lambda}(\cdot)$ of $\lambda(\cdot)$ is available, one can test monotonicity of $f(\cdot)$ by applying the results in Section \ref{sec: test} with the data $\{X_i,\tilde{Y}_i\}_{1\leq i\leq n}$ where $\tilde{Y}_i=Y-\hat{\lambda}(\hat{P}_i)$. For this test, one can state the theorem that is analogous to Theorem \ref{thm: model with endogenous covariates}.

\section{Monte Carlo Simulations}\label{sec: monte carlo}

In this section, I provide results of a small simulation study. The
aim of the simulation study is to shed some light on the size properties of the
test in finite samples and to compare its power with that of other tests
developed in the literature. In particular, I consider the tests of
\cite{Gijbels2000} (GHJK), \cite{GSV2000} (GSV), and \cite{HallHeckman2000} (HH).

I consider samples of size $n=100$, $200$, and $500$
with $X_{i}$'s uniformly distributed on the $[-1,1]$ interval, and regression functions of the form $f(x)=c_1x-c_2\phi(c_3x)$ where $c_1,c_2,c_3\geq 0$ and $\phi(\cdot)$ is the pdf of the standard normal distribution. I assume that $\eps_i$ is a zero-mean random variable that is independent of $X_i$ and has the standard deviation $\sigma$. Depending on the experiment, $\eps_i$ has either normal or continuous uniform distribution. Four combinations of parameters are studied: (1) $c_1=c_2=c_3=0$ and $\sigma=0.05$; (2) $c_1=c_3=1$, $c_2=4$, and $\sigma=0.05$; (3) $c_1=1$, $c_2=1.2$, $c_3=5$, and $\sigma=0.05$; (4) $c_1=1$, $c_2=1.5$, $c_3=4$, and $\sigma=0.1$. Cases 1 and 2 satisfy $\MH_0$ whereas cases 3 and 4 do not. In case 1, the regression function is flat corresponding to the maximum of the type I error. In case 2, the regression function is strictly increasing. Cases 3 and 4 give examples of the regression functions that are mostly increasing but violate $\MH_0$ in the small neighborhood near 0. All functions are plotted in figure 2. The parameters were chosen so that to have nontrivial rejection probability in most cases (that is, bounded from zero and from one).

\begin{figure}
\caption{Regression Functions Used in Simulations}
\includegraphics[scale=0.5]{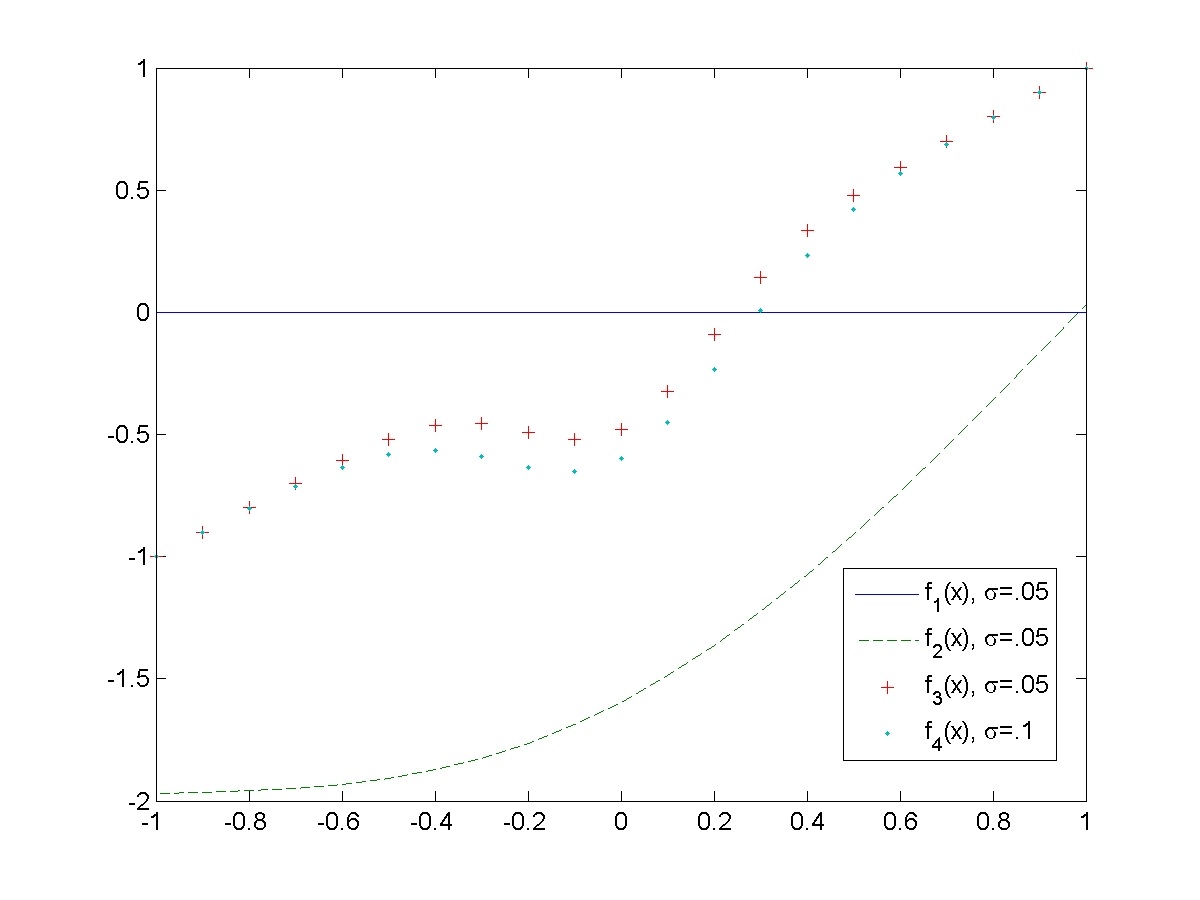}
\end{figure}

Let me describe the tuning parameters for all tests that are used
in the simulations. For the tests of GSV, GHJK,
and HH, I tried to follow their instructions
as closely as possible. For the test developed in this paper, I use kernel weighting functions with $k=0$, $\MS_n=\{(x,h):x\in\{X_1,...,X_n\},h\in H_n\}$, and the kernel $K(x)=0.75(1-x^{2})$
for $x\in(-1;+1)$ and $0$ otherwise. I use the set of
bandwidth values $H_n=\{h_{\max}u^l:h\geq h_{\min},l=0,1,2,...\}$, $u=0.5$, $h_{\max}=1$, $h_{\min}=0.4h_{\max}(\log n/n)^{1/3}$, and the truncation parameter $\gamma=0.01$. 
For the test of GSV, I use the same kernel $K$ with the bandwidth value $h_{n}=n^{-1/5}$, which was suggested in their paper, and I consider their sup-statistic. For the test of GHJK,
I use their run statistic maximized over $k\in\{10(j-1)+1:\, j=1,2,...0.2n\}$ (see the original paper for the explanation of the notation).
For the test of HH, local polynomial estimates
are calculated over $r\in nH_n$ at every design point $X_i$.
The set $nH_n$ is chosen so that to make the results comparable with those
for the test developed in this paper. Finally, I consider two versions of the test developed in this paper depending on how $\sigma_i$ is estimated. More precisely, I consider the test with $\sigma_i$ estimated by the Rice's method (see equation (\ref{eq: Rice formula})), which I refer to in the table below as CS (consistent sigma), and the test with $\hat{\sigma}_i=\hat{\eps}_i$ where $\hat{\eps}_i$ is obtained as the residual from estimating $f$ using the series method with polynomials of order 5, 6 and 8 whenever the sample size $n$ is 100, 200, and 500, respectively, which I refer to in the table below as IS (inconsistent sigma).

The rejection probabilities corresponding to nominal level $\alpha=0.1$
for all tests are presented in table 1. The results are based on 1000
simulations with 500 bootstrap repetitions in all cases excluding
the test of GSV where the asymptotic critical value is
used.

\begin{table}
\caption{Results of Monte Carlo Experiments}
\begin{tabular}{cccccccccccc}
\hline 
\multirow{2}{*}{{\small Noise}} & \multirow{2}{*}{{\small Case}} & \multirow{2}{*}{{\small Sample}} & \multicolumn{9}{c}{{\small Proportion of Rejections for}}\tabularnewline
\cline{4-12} 
&&& {\small GSV} & {\small GHJK} & {\small HH} & {\small CS-PI} & {\small CS-OS} & {\small CS-SD} & {\small IS-PI} & {\small IS-OS} & {\small IS-SD} \tabularnewline
\hline 
\multicolumn{1}{c}{} && {\small 100} & {\small .118} & {\small .078} & {\small .123} & {\small .128} & {\small .128} & {\small .128} & {\small .164} & {\small .164} & {\small .164}\tabularnewline
{\small normal}&{\small 1} & {\small 200} & {\small .091} & {\small .051} & {\small .108} & {\small .114} & {\small .114} & {\small .114} & {\small .149} & {\small .149} & {\small .149}\tabularnewline
&& {\small 500} & {\small .086} & {\small .078} & {\small .105} & {\small .114} & {\small .114} & {\small .114} & {\small .133} & {\small .133} & {\small .133}\tabularnewline
\hline
&& {\small 100} & {\small 0} & {\small .001} & {\small 0} & {\small .001} & {\small .008} & {\small .008} & {\small .008} & {\small .024} & {\small .024}\tabularnewline
{\small normal}&{\small 2}& {\small 200} & {\small 0} & {\small .002} & {\small 0} & {\small .001} & {\small .010} & {\small .010} & {\small .007} & {\small .017} & {\small .017}\tabularnewline
&& {\small 500} & {\small 0} & {\small .001} & {\small 0} & {\small .002} & {\small .007} & {\small .007} & {\small .005} & {\small .016} & {\small .016}\tabularnewline
\hline 
&& {\small 100} & {\small 0} & {\small .148} & {\small .033} & {\small .259} & {\small .436} & {\small .433} & {\small 0} & {\small 0} & {\small 0}\tabularnewline
{\small normal}&{\small 3}& {\small 200} & {\small .010} & {\small .284} & {\small .169} & {\small .665} & {\small .855} & {\small .861} & {\small .308} & {\small .633} & {\small .650}\tabularnewline
&& {\small 500} & {\small .841} & {\small .654} & {\small .947} & {\small .982} & {\small .995} & {\small .997} & {\small .975} & {\small .995} & {\small .995}\tabularnewline
\hline 
&& {\small 100} & {\small .037} & {\small .084} & {\small .135} & {\small .163} & {\small .220} & {\small .223} & {\small .023} & {\small .042} & {\small .043}\tabularnewline
{\small normal}&{\small 4}& {\small 200} & {\small .254} & {\small .133} & {\small .347} & {\small .373} & {\small .499} & {\small .506} & {\small .362} & {\small .499} & {\small .500}\tabularnewline
&& {\small 500} & {\small .810} & {\small .290} & {\small .789} & {\small .776} & {\small .825} & {\small .826} & {\small .771} & {\small .822} & {\small .822}\tabularnewline
\hline 
\multicolumn{1}{c}{} && {\small 100} & {\small .109} & {\small .079} & {\small .121} & {\small .122} & {\small .122} & {\small .122} & {\small .201} & {\small .201} & {\small .201}\tabularnewline
{\small uniform}&{\small 1} & {\small 200} & {\small .097} & {\small .063} & {\small .109} & {\small .121} & {\small .121} & {\small .121} & {\small .160} & {\small .160} & {\small .160}\tabularnewline
&& {\small 500} & {\small .077} & {\small .084} & {\small .107} & {\small .092} & {\small .092} & {\small .092} & {\small .117} & {\small .117} & {\small .117}\tabularnewline
\hline
&& {\small 100} & {\small .001} & {\small .001} & {\small 0} & {\small 0} & {\small .006} & {\small .007} & {\small .017} & {\small .032} & {\small .033}\tabularnewline
{\small uniform}&{\small 2}& {\small 200} & {\small 0} & {\small 0} & {\small 0} & {\small .001} & {\small .010} & {\small .010} & {\small .012} & {\small .022} & {\small .024}\tabularnewline
&& {\small 500} & {\small 0} & {\small .003} & {\small 0} & {\small .003} & {\small .011} & {\small .011} & {\small .011} & {\small .021} & {\small .021}\tabularnewline
\hline 
&& {\small 100} & {\small 0} & {\small .151} & {\small .038} & {\small .244} & {\small .438} & {\small .449} & {\small 0} & {\small 0} & {\small 0}\tabularnewline
{\small uniform}&{\small 3}& {\small 200} & {\small .009} & {\small .233} & {\small .140} & {\small .637} & {\small .822} & {\small .839} & {\small .290} & {\small .607} & {\small .617}\tabularnewline
&& {\small 500} & {\small .811} & {\small .582} & {\small .947} & {\small .978} & {\small .994} & {\small .994} & {\small .975} & {\small .990} & {\small .990}\tabularnewline
\hline 
&& {\small 100} & {\small .034} & {\small .084} & {\small .137} & {\small .155} & {\small .215} & {\small .217} & {\small .024} & {\small .045} & {\small .046}\tabularnewline
{\small uniform}&{\small 4}& {\small 200} & {\small .197} & {\small .116} & {\small .326} & {\small .357} & {\small .473} & {\small .478} & {\small .323} & {\small .452} & {\small .456}\tabularnewline
&& {\small 500} & {\small .803} & {\small .265} & {\small .789} & {\small .785} & {\small .844} & {\small .846} & {\small .782} & {\small .847} & {\small .848}\tabularnewline
\hline 
\end{tabular}

{\small Nominal Size is 0.1. GSV, GHJK, and HH stand for the tests of \cite{GSV2000},
\cite{Gijbels2000}, and \cite{HallHeckman2000} respectively. CS-PI, CS-OS, and CS-SD refer to the test developed in this paper with $\sigma_i$ estimated using Rice's formula and plug-in, one-step, and step-down critical values respectively. Finally, IS-PI, IS-OS, and IS-SD refer to the test developed in this paper with $\sigma_i$ estimated by $\hat{\sigma}_i=\hat{\eps}_i$ and plug-in, one-step, and step-down critical values respectively.}
\end{table}

The results of the simulations can be summarized as follows. First,
the results for normal and uniform disturbances are rather similar.
The test developed in this paper with $\sigma_i$ estimated using the Rice's method maintains the required size quite well (given the nonparametric structure of the problem) and yields size comparable with that of the GSV, GHJK, and HH tests. On the other hand, the test with $\hat{\sigma}_i=\hat{\eps}_i$ does pretty well in terms of size only when the sample size is as large as 500.
When the null hypothesis does not hold, the CS test with the step-down critical value yields the highest proportion of rejections in all cases. Moreover, in case 3 with the sample size $n=200$, this test has much higher power than that of GSV, GHJK, and HH. The CS test also has higher power than that of the IS test. Finally, the table shows that the one-step critical value gives a notable improvement in terms of power in comparison with plug-in critical value. For example, in case 3 with the sample size $n=200$, the one-step critical value gives additional 190 rejections out 1000 simulations in comparison with the plug-in critical value for the CS test and additional 325 rejections for the IS test. On the other hand, the step-down approach gives only minor improvements over the one-step approach. Overall, the results of the simulations are consistent with the theoretical findings in this paper. In particular, selection procedures yielding one-step and step-down critical values improve power with no size distortions. Additional simulation results are presented in the supplementary Appendix.

\section{Empirical Application}\label{sec: empirical example}

In this section, I review the arguments of \cite{EllisonEllison}
on how strategic entry deterrence might yield a non-monotone relation
between market size and investment in the pharmaceutical industry
and then apply the testing procedures developed in this paper to their dataset. I start
with describing their theory. Then I provide the details of the dataset.
Finally, I present the results.

In the pharmaceutical industry, incumbents whose patents are about
to expire can use investments strategically to prevent generic
entries after the expiration of the patent. In order to understand
how this strategic entry deterrence influences the relation between
market size and investment levels, \cite{EllisonEllison} developed
two models for an incumbent's investment. In the first model, potential entrants do not observe
the incumbent's investment but they do in the second
one. So, a strategic entry deterrence motive is absent in the former
model but is present in the latter one. Therefore, the difference in incumbent's investment between two models is explained by the strategic entry deterrence. Ellison and Ellison showed that in the former
model, the investment-market size relation is determined by a combination
of direct and competition effects. The direct effect is positive if
increasing the market size (holding entry probabilities
fixed) raises the marginal benefit from the investment more than it
raises the marginal cost of the investment. The competition effect
is positive if the marginal benefit of the investment is larger when
the incumbent is engaged in duopoly competition than it is when the
incumbent is a monopolist. The equilibrium investment is increasing
in market size if and only if the sum of two effects is positive.
Therefore, a sufficient condition for the monotonicity of investment-market
size relation is that both effects are of the same sign.\footnote{An interested reader can find a more detailed discussion in the original
paper.
} In the latter model, there is also a strategic entry deterrence
effect. The authors noted that this effect should be relatively less
important in small and large markets than it is in markets of intermediate
size. In small markets, there are not enough profits for potential
entrants, and there is no need to prevent entry. In large markets,
profits are so large that no reasonable investment levels will be
enough to prevent entries. As a result, strategic entry deterrence
might yield a non-monotonic relation between market size and investment
no matter whether the relation in the model with no strategic entry
deterrence is increasing or decreasing.

Ellison and Ellison studied three types of investment: detail
advertising, journal advertising, and presentation proliferation.
Detail advertising, measured as per-consumer expenditures, refers to sending representatives to doctors' offices.
Since both revenues and cost of detail advertising are likely to be
linear in the market size, it can be shown that the direct effect
for detail advertising is zero. The competition
effect is likely to be negative because detail advertising will benefit
competitors as well. Therefore, it is expected that detail advertising
is a decreasing function of the market size in the absence of strategic
distortions. Strategic entry deterrence should decrease detail advertising
for markets of intermediate size. Journal advertising is the placement
of advertisements in medical journals. Journal advertising is also measured as per-consumer expenditures. The competition effect for journal
advertising is expected to be negative for the same reason as for
detail advertising. The direct effect, however, may be positive because
the cost per potential patient is probably a decreasing function of
the market size. Opposite directions of these effects make journal
advertising less attractive for detecting strategic entry deterrence
in comparison with detail advertising. Nevertheless, following the original paper, I assume that journal advertising is a decreasing function of the market size in the absence of strategic distortions.
Presentation proliferation is selling a drug in
many different forms. Since the benefits of introducing a new form
is approximately proportional to the market size while the costs can
be regarded as fixed, the direct effect for presentation proliferation
should be positive. In addition, the competition effect is also likely
to be positive because it creates a monopolistic niche for the incumbent.
Therefore, presentation proliferation should be positively related
to market size in the absence of strategic distortions.

The dataset consists of 63 chemical compounds, sold under 71 different
brand names. All of these drugs lost their patent exclusivity between
1986 and 1992. There are four variables in the dataset: average revenue
for each drug over three years before the patent expiration (this
measure should be regarded as a proxy for market size), average costs
of detail and journal advertising over the same time span as revenues,
and a Herfindahl-style measure of the degree to which revenues are concentrated
in a small number of presentations (this measure should be regarded
as the inverse of presentation proliferation meaning that higher values
of the measure indicate lower presentation proliferation).

Clearly, the results will depend on how I define both dependent and
independent variables for the test. Following the strategy adopted
in the original paper, I use log of revenues as the independent variable
in all cases, and the ratio of advertising costs to revenues for detail
and journal advertising and the Herfindahl-style measure for presentation
proliferation as the dependent variable. The null hypothesis is that
the corresponding conditional mean function is decreasing.\footnote{In the original paper, \cite{EllisonEllison} test the null hypothesis
consisting of the union of monotonically increasing and monotonically
decreasing regression functions. The motivation for this modification
is that increasing regression functions contradict the theory developed
in the paper and, hence, should not be considered as evidence
of the existence of strategic entry deterrence. On the other hand,
increasing regression functions might arise if the strategic entry deterrence
effect over-weighs direct and competition effects even in small and
large markets, which could be considered as extreme evidence of
the existence of strategic entry deterrence.
}

I consider the test with kernel weighting functions with $k=0$ or $1$ and the kernel $K(x)=0.75(1-x^2)$ for $x\in(-1,1)$ and $0$ otherwise. I use the set of bandwidth values $H_n=\{0.5;1\}$ and the set of weighting functions $\MS_n=\{(x,h):x\in\{X_1,...,X_n\},h\in H_n\}$. Implementing the test requires estimating $\sigma_{i}^{2}$ for all
$i=1,...,n$. Since the test based on Rice's method outperformed that with $\hat{\sigma}_i=\hat{\eps}_i$ in the Monte Carlo simulations, I use this method in the benchmark procedure. I also check robustness
of the results using the following two-step procedure. First, I obtain
residuals of the OLS regression of $Y$ on a set of transformations
of $X$. In particular, I use polynomials in $X$ up to the third degree
(cubic polynomial). Second, squared residuals are projected onto the
same polynomial in $X$ using the OLS regression again. The resulting
projections are estimators $\hat{\sigma}_{i}^{2}$ of $\sigma_{i}^{2}$,
$i=1,...,n$.

The results of the test are presented in table 2. The table shows
the p-value of the test for each type of investment and each method
of estimating $\sigma_{i}^{2}$. In the table, method 1 corresponds
to estimating $\sigma_{i}^{2}$ using Rice's formula, and methods
2, 3, and 4 are based on polynomials of first, second, and third degrees
respectively. Note that all methods yield similar numbers, which reassures the robustness of the results. All the methods with $k=0$ reject the null hypothesis that journal advertising is decreasing in market size with 10\% confidence level. This may be regarded as evidence that pharmaceutical companies use strategic investment in the form of journal advertising to deter generic entries. On the other hand, recall that direct and competition effects probably have different signs for journal advertising, and so rejecting the null may also be due to the fact that the direct effect dominates for some values of market size. In addition, the test with $k=1$ does not reject the null hypothesis that journal advertising is decreasing in market size at the 10\% confidence level, no matter how $\sigma_i$ are estimated.
No method rejects the null hypothesis in the case of detail advertising and presentation proliferation. This may be (1) because firms do not use these types of investment for strategic entry deterrence, (2) because the strategic effect is too weak to yield non-monotonicity, or (3) because the sample size is not large enough. Overall, the results are consistent with those presented in \cite{EllisonEllison}.

\begin{table}

\caption{Incumbent Behavior versus Market Size: Monotonicity Test p-value}

\begin{tabular}{ccccccc}
\hline 
\multirow{3}{*}{{\small Method}} & \multicolumn{6}{c}{{\small Investment Type}}\tabularnewline
\cline{2-7} 
 & \multicolumn{2}{c}{{\small Detail Advertising}} & \multicolumn{2}{c}{{\small Journal Advertising}} & \multicolumn{2}{c}{{\small Presentation Proliferation}}\tabularnewline
\cline{2-7} 
 & {\small k=0} & {\small k=1} & {\small k=0} & {\small k=1} & {\small k=0} & {\small k=1}\tabularnewline
\hline 
{\small 1} & {\small .120} & {\small .111} & {\small .056} & {\small .120} & {\small .557} & {\small .661}\tabularnewline
{\small 2} & {\small .246} & {\small .242} & {\small .088} & {\small .168} & {\small .665} & {\small .753}\tabularnewline
{\small 3} & {\small .239} & {\small .191} & {\small .099} & {\small .195} & {\small .610} & {\small .689}\tabularnewline
{\small 4} & {\small .301} & {\small .238} & {\small .098} & {\small .194} & {\small .596} & {\small .695}\tabularnewline
\hline 
\end{tabular}

\end{table}

\section{Conclusion}\label{sec: conclusion}

In this paper, I have developed a general framework for testing monotonicity of a nonparametric regression function, and have given a broad class of new tests. A general test statistic uses many different weighting functions so that an approximately optimal weighting function
is determined automatically. In this sense, the test adapts to the properties of the model. I have also obtained new methods to simulate the critical
values for these tests. These are based on selection procedures. The procedures
are used to estimate what counterparts of the test statistic should
be used in simulating the critical value. They are constructed so that
no violation of the asymptotic size occurs. Finally, I have given tests suitable for models with multiple and endogenous covariates for the first time in the literature.

The new methods have numerous applications in economics. In particular, they can be applied to test qualitative predictions of comparative statics analysis including those derived via robust comparative statics. In addition, they are useful for evaluating monotonicity assumptions, which are often imposed in economic and econometric models, and for classifying economic objects in those cases where classification includes the concept of monotonicity (for example, normal/inferior and luxury/necessity goods). Finally, these methods can be used to detect strategic behavior of economic agents that might cause non-monotonicity in otherwise monotone relations.

The attractive properties of the new tests are demonstrated via Monte Carlo simulations.
In particular, it is shown that the rejection probability of the new
tests greatly exceeds that of other tests for some simulation designs. In addition, I applied the tests developed
in this paper to study entry deterrence effects in the pharmaceutical
industry using the dataset of \cite{EllisonEllison}. I showed that
the investment in the form of journal advertising seems to
be used by incumbents in order to prevent generic entries after the
expiration of patents. The evidence is rather weak, though.

\appendix

\section{Implementation Details}\label{sec: algorithms}
In this section, I provide detailed step-by-step instructions for implementing plug-in, one-step, and step-down critical values. The instructions are given for constructing a test of level $\alpha$. In all cases, let $B$ be a large integer denoting the number of bootstrap repetitions, and let $\{\epsilon_{i,b}\}_{i=1,b=1}^{n,B}$ be a set of independent $N(0,1)$ random variables. For one-step and step-down critical values, let $\gamma$ denote the truncation probability, which should be small relative to $\alpha$.

\subsection{Plug-in Approach}
\begin{enumerate}
\item For each $b=\overline{1,B}$ and $i=\overline{1,n}$, calculate $Y_{i,b}^\star=\hat{\sigma}_i\epsilon_{i,b}$.
\item For each $b=\overline{1,B}$, calculate the value $T_b^\star$ of the test statistic using the sample $\{X_i,Y_{i,b}^\star\}_{i=1}^n$.
\item Define the plug-in critical value, $c_{1-\alpha}^{PI}$, as the $(1-\alpha)$ sample quantile of $\{T_b^\star\}_{b=1}^B$.
\end{enumerate}

\subsection{One-Step Approach}
\begin{enumerate}
\item For each $b=\overline{1,B}$ and $i=\overline{1,n}$, calculate $Y_{i,b}^\star=\hat{\sigma}_i\epsilon_{i,b}$.
\item Using the plug-in approach, simulate $c_{1-\gamma}^{PI}$.
\item Define $\MS_n^{OS}$ as the set of values $s\in\MS_n$ such that $b(s)/(\hat{V}(s))^{1/2}>-2c_{1-\gamma}^{PI}$.
\item For each $b=\overline{1,B}$, calculate the value $T_b^\star$ of the test statistic using the sample $\{X_i,Y_{i,b}^\star\}_{i=1}^n$ and taking maximum only over $\MS_n^{OS}$ instead of $\MS_n$.
\item Define the one-step critical value, $c_{1-\alpha}^{OS}$, as the $(1-\alpha)$ sample quantile of $\{T_b^\star\}_{b=1}^B$.
\end{enumerate}

\subsection{Step-down Approach}
\begin{enumerate}
\item For each $b=\overline{1,B}$ and $i=\overline{1,n}$, calculate $Y_{i,b}^\star=\hat{\sigma}_i\epsilon_{i,b}$.
\item Using the plug-in and one-step approaches, simulate $c_{1-\gamma}^{PI}$ and $c_{1-\gamma}^{OS}$, respectively.
\item Denote $\MS_n^1=\MS_n^{OS}$, $c^1=c_{1-\gamma}^{OS}$, and set $l=1$.
\item For given value of $l\geq 1$, define $\MS_n^{l+1}$ as the set of values $s\in\MS_n^l$ such that $b(s)/(\hat{V}(s))^{1/2}>-c_{1-\gamma}^{PI}-c^l$.
\item For each $b=\overline{1,B}$, calculate the value $T_b^\star$ of the test statistic using the sample $\{X_i,Y_{i,b}^\star\}_{i=1}^n$ and taking the maximum only over $\MS_n^{l+1}$ instead of $\MS_n$.
\item Define $c^{l+1}$, as the $(1-\gamma)$  sample quantile of $\{T_b^\star\}_{b=1}^B$.
\item If $\MS_n^{l+1}=\MS_n^l$, then go to step (8). Otherwise, set $l=l+1$ and go to step (4).
\item For each $b=\overline{1,B}$, calculate the value $T_b^\star$ of the test statistic using the sample $\{X_i,Y_{i,b}^\star\}_{i=1}^n$ and taking the maximum only over $\MS_n^{l}$ instead of $\MS_n$.
\item Define $c_{1-\alpha}^{SD}$, as the $(1-\alpha)$ sample quantile of $\{T_b^\star\}_{b=1}^B$.
\end{enumerate}

\section{Verification of High-Level Conditions}\label{sec: verification of high level conditions}
This section verifies high level assumptions used in Section \ref{sec: theory under high level conditions} under primitive conditions.
Assumptions used in Sections \ref{sec: multivariate models}, \ref{sec: endogenous covariates}, and \ref{sec: sample selection models} can be verified by similar arguments.
 First, I consider Assumption A\ref{as: sigma}, which concerns the uniform consistency of the estimator $\hat{\sigma}_i$ of $\sigma_i$ over $i=\overline{1,n}$. Second, I verify Assumption A\ref{as: V} and derive a bound on the sensitivity parameter $A_n$ when kernel weighting functions are used. This allows me to verify Assumption A\ref{as: growth condition}. Finally, I provide primitive conditions for Assumption A\ref{as: weighting functions difficult}.

Let $C_6$ and $L^\prime$ be strictly positive constants. The following proposition proves consistency of the local version of Rice's estimator.
\begin{proposition}[Verifying Assumption A\ref{as: sigma}]\label{lem: consistency of rice estimator}
Suppose that $\hat{\sigma}_i$ is the local version of Rice's estimator of $\sigma_i$ given in equation (\ref{eq: local rice estimator}). Suppose also that (i) Assumptions A\ref{as: disturbances} and A\ref{as: design difficult} hold, (ii) $b_n\geq (\log n)^2/(C_6n)$, (iii) $|f(X_i)-f(X_j)|\leq L^\prime|X_i-X_j|$ for all $i,j=\overline{1,n}$, and (iv) $|\sigma_i^2-\sigma_j^2|\leq L^\prime|X_i-X_j|$ for all $i,j=\overline{1,n}$. Then 
$
\max_{1\leq i\leq n}|\hat{\sigma}_i-\sigma_i|=O_p(b_n^2+\log n/(b_nn^{1/2}))
$, and so Assumption A\ref{as: sigma} holds with any $\kappa_2$ satisfying  $b_n^2+\log n/(b_nn^{1/2})=o(n^{-\kappa_2})$.
\end{proposition}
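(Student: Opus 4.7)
The plan is to reduce the claim to a rate bound on $\max_i|\hat\sigma_i^2-\sigma_i^2|$ and then analyze that quantity via a bias/variance decomposition. Since $\sigma_i \geq c_1>0$ by A\ref{as: disturbances} and $\hat\sigma_i\geq 0$ by construction,
\[
|\hat\sigma_i-\sigma_i| \;=\; \frac{|\hat\sigma_i^2-\sigma_i^2|}{\hat\sigma_i+\sigma_i} \;\leq\; \frac{|\hat\sigma_i^2-\sigma_i^2|}{c_1},
\]
so it suffices to control $\max_{1\leq i\leq n}|\hat\sigma_i^2-\sigma_i^2|$ at the desired rate.

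As a preliminary step, I would use A\ref{as: design difficult} together with Bernstein's inequality applied to the indicators $\mathbf{1}\{|X_k-X_i|\leq b_n\}$ (for each $i$ and then union-bounded over $i$) to show that, uniformly in $i$, $c\, n b_n\leq |J(i)|\leq C\, n b_n$ w.p.a.1 for some $0<c<C<\infty$; the assumption $b_n\geq (\log n)^2/(C_6 n)$ is what makes the exponential bound useful here. A standard order-statistic spacing argument under A\ref{as: design difficult} also gives $\max_j(X_{(j+1)}-X_{(j)})=O_p(\log n/n)$.

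For the bias, expand
\[
(Y_{j+1}-Y_j)^2=(f(X_{j+1})-f(X_j))^2+2(f(X_{j+1})-f(X_j))(\varepsilon_{j+1}-\varepsilon_j)+(\varepsilon_{j+1}-\varepsilon_j)^2
\]
and take conditional expectation given $\{X_k\}$. The cross term vanishes since $\Ep[\varepsilon_k|X's]=0$ and the $\varepsilon_k$ are independent across $k$. By the Lipschitz hypothesis (iii) together with the spacing bound, the first contribution is dominated by $(L^{\prime}\max_j(X_{(j+1)}-X_{(j)}))^2 = O_p((\log n/n)^2)$ uniformly. The remaining piece equals $\frac{1}{2|J(i)|}\sum_j(\sigma_{j+1}^2+\sigma_j^2)$, which by (iv) together with a symmetric-window/second-order argument (averaging the Lipschitz deviations $\sigma_j^2-\sigma_i^2$ across $j\in J(i)$, whose signed deviations largely cancel in a symmetric window, leaving the quadratic remainder) differs from $\sigma_i^2$ by $O(b_n^2)$ uniformly. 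This yields the bias rate $O_p(b_n^2)$ (the spacing-based piece being of smaller order).

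For the stochastic part I would bound $\hat\sigma_i^2-\Ep[\hat\sigma_i^2|\{X_k\}]$ uniformly in $i$. Since the summands $(\varepsilon_{j+1}-\varepsilon_j)^2-\sigma_{j+1}^2-\sigma_j^2$ are not independent across $j$ (adjacent terms share an $\varepsilon$), split $J(i)$ by parity of $j$ into two subsums, each a sum of independent centered random variables (given the $X_k$). Since A\ref{as: disturbances} only provides fourth moments of $\varepsilon$, I would use truncation at a slowly growing level together with Rosenthal's inequality to obtain, for any fixed $r>1$, a moment bound $\Ep|\hat\sigma_i^2-\Ep[\hat\sigma_i^2|\{X_k\}]|^{2r}\leq C_r(nb_n)^{-r}(\log n)^{C r}$; combining with Markov and union bounding over $i=\overline{1,n}$ yields a uniform rate of $O_p(\log n/(b_n n^{1/2}))$ (choosing $r$ large enough makes the $n^{1/(2r)}$ factor absorbable). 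Putting these two rates together and applying the reduction from the first paragraph gives
\[
\max_{1\leq i\leq n}|\hat\sigma_i-\sigma_i|=O_p\!\bigl(b_n^2+\log n/(b_n n^{1/2})\bigr),
\]
which implies A\ref{as: sigma} for any $\kappa_2$ with $b_n^2+\log n/(b_n n^{1/2})=o(n^{-\kappa_2})$.

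\textbf{Main obstacle.} The delicate step is the uniform-in-$i$ stochastic bound under only a fourth-moment assumption on $\varepsilon$: a naive second-moment union bound is too crude and standard sub-exponential concentration is unavailable. Carrying out a truncation plus higher-moment Rosenthal argument, while simultaneously handling the within-$i$ dependence of the Rice differences via an odd/even parity split, is the technical heart of the proof.
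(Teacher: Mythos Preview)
Your skeleton matches the paper closely: the same reduction $|\hat\sigma_i-\sigma_i|\leq|\hat\sigma_i^2-\sigma_i^2|/c_1$, the same expansion of $(Y_{j+1}-Y_j)^2$, the odd/even parity split for the $\eps_j\eps_{j+1}$ terms, and the Bernstein-type count $|J(i)|\asymp nb_n$ (which the paper packages as Lemma~\ref{lem: technical lemma}). The paper's decomposition is $\hat\sigma_i^2-\sigma_i^2=p_{i,1}+p_{i,2}+p_{i,3}-p_{i,4}+O_p(b_n^2)$ with four centered random sums, essentially your bias/stochastic split.

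The genuine gap is in the uniform-in-$i$ control of the centered sums. The moment bound you propose, $\Ep|\hat\sigma_i^2-\Ep[\hat\sigma_i^2|\{X_k\}]|^{2r}\leq C_r(nb_n)^{-r}(\log n)^{Cr}$ for fixed $r>1$, is not available under A\ref{as: disturbances}: the summands involve $\eps_j^2$, and A\ref{as: disturbances} gives only $\Ep[\eps_j^4]\leq C_1$, so the $2r$-th moment of $\eps_j^2$ is uncontrolled for $r>1$. Truncation does not rescue this. If you truncate at a slowly growing $M_n=(\log n)^c$ so that Rosenthal on the truncated piece produces the bound you write, the tail piece $\eps_j^2\mathbf 1\{|\eps_j|>M_n\}$ (with $L^1$-norm $O(M_n^{-2})$ per term) contributes $O_p(1/(b_n(\log n)^{2c}))$ to $\max_i|p_{i,k}|$, far larger than $\log n/(b_n n^{1/2})$; if instead $M_n\gtrsim n^{1/4}$ so the tail event is negligible, Rosenthal carries a factor $M_n^{4r-4}\gtrsim n^{r-1}$ and the union bound over $n$ indices fails at the target rate. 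The paper handles all four sums in one stroke with the Chernozhukov--Chetverikov--Kato maximal inequality (Lemma~\ref{lem: maximal inequality CCK}), $\Ep[\max_j|\sum_i(W_{ij}-\Ep W_{ij})|]\leq C(\sigma\sqrt{\log p}+\sqrt{\Ep M^2}\log p)$, applied conditionally on $\{X_i\}$ with $p=n$: plugging in $\sigma^2\asymp 1/(nb_n)$ and $\Ep[M^2]\leq C\,\Ep[\max_i\eps_i^4]/(nb_n)^2\leq Cn/(nb_n)^2$ yields the $\log n/(b_n n^{1/2})$ rate directly, with no truncation and under only fourth moments. A secondary point: your ``symmetric-window/second-order'' justification for the $O(b_n^2)$ bias requires differentiability of $\sigma^2(\cdot)$, whereas (iv) is only Lipschitz and delivers at best $O(b_n)$; the paper asserts the same $O_p(b_n^2)$ remainder without spelling out this step.
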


Next, I consider restrictions on the weighting functions to ensure that Assumption A\ref{as: V} holds and give an upper bound on the sensitivity parameter $A_n$.

\begin{proposition}[Verifying Assumptions A\ref{as: V} and A\ref{as: growth condition}]\label{lem: kernel function}
Suppose that kernel weighting functions are used. In addition, suppose that (i) Assumptions A\ref{as: disturbances} and A\ref{as: design difficult} hold, (ii) $K(\cdot)$ has the support $[-1,+1]$, is continuous, and strictly positive on the interior of its support, (iii) $x\in[s_l,s_r]$ for all $(x,h)\in\MS_n$, (iv) $h_{\min}\geq (\log n)^2/(C_6n)$ w.p.a.1 where $h_{\min}=\min_{(x,h)\in\MS_n}h$, and (v) $h_{\max}\leq (s_r-s_l)/2$ where $h_{\max}=\max_{(x,h)\in\MS_n}h$. Then (a) $A_n\leq C/(nh_{\min})^{1/2}$ w.p.a.1 for some $C>0$, and so Assumption A\ref{as: growth condition} holds for the basic set of weighting functions;  (b) if Assumption A\ref{as: sigma} is satisfied, then Assumption A\ref{as: V} holds with $\kappa_3=\kappa_2$; (c) if Assumption A\ref{as: function estimation} is satisfied, then Assumption A\ref{as: V} holds with $\kappa_3=\kappa_1$ as long as $\log p/(h_{\min}n^{1/2})=o_p(n^{-\kappa_1})$.
\end{proposition}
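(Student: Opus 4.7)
The plan is to first derive the claimed upper bound on the sensitivity parameter $A_n$ by matching upper and lower bounds on numerator and denominator, then to use this bound together with simple moment estimates to handle the ratio $\hat V(s)/V(s)$ uniformly over $s \in \MS_n$. Fix $s=(x,h) \in \MS_n$ and write $T_i(s) = \sum_j \sign(X_j-X_i) Q(X_i,X_j,s)$. Since $K$ has support $[-1,1]$, only indices with $X_i, X_j \in [x-h,x+h]$ contribute. By Assumption A\ref{as: design difficult} and Bernstein's inequality applied uniformly over $x \in [s_l,s_r]$ (valid because $nh_{\min} \geq (\log n)^2/C_6$), the number of such indices is of exact order $nh$ w.p.a.1, which yields $\max_i|T_i(s)| \leq Cnh^{k+1}$. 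For the matching lower bound on $V(s) \geq c_1^2\sum_i T_i(s)^2$, I restrict the sum to indices $i$ with $X_i$ in a subinterval of $[x-h,x+h]$ where cancellation inside $T_i(s)$ is incomplete (e.g.\ $X_i \in [x+h/2,x+3h/4]$); for such $i$, the strict positivity of $K$ on the interior of $[-1,1]$ combined with A\ref{as: design difficult} gives $|T_i(s)| \geq c\,nh^{k+1}$, and since there are $\asymp nh$ such $i$, $V(s) \gtrsim n^3 h^{2k+3}$. Dividing yields $A_n \leq C/(nh_{\min})^{1/2}$ w.p.a.1. For the basic set, $|\MS_n| \leq Cn\log n$ so $\log p \leq C\log n$ and $nh_{\min} \asymp n^{2/3}(\log n)^{1/3}$; hence $nA_n^4(\log(pn))^7 \lesssim (\log n)^{19/3}/n^{1/3} = o(1)$, and the second half of A\ref{as: growth condition} holds for any positive $\kappa_1,\kappa_2,\kappa_3$. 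This establishes (a).

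For (b), write $\hat V(s) - V(s) = \sum_i(\hat\sigma_i^2 - \sigma_i^2) T_i(s)^2$. Since $\max_i|\hat\sigma_i^2 - \sigma_i^2| \leq \max_i|\hat\sigma_i + \sigma_i|\cdot\max_i|\hat\sigma_i-\sigma_i| = o_p(n^{-\kappa_2})$ by A\ref{as: disturbances} and A\ref{as: sigma}, and $\sum_i T_i(s)^2 \leq V(s)/c_1^2$, we get $\sup_s|\hat V(s)/V(s) - 1| = o_p(n^{-\kappa_2})$; a Taylor expansion of the square root around one then gives A\ref{as: V} with $\kappa_3 = \kappa_2$ (both directions follow because $\hat V/V \to_p 1$).

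For (c), set $d_i = f(X_i)-\hat f(X_i)$ and expand
\[
\hat V(s) - V(s) = \sum_i(\varepsilon_i^2 - \sigma_i^2) T_i(s)^2 + 2\sum_i \varepsilon_i d_i T_i(s)^2 + \sum_i d_i^2 T_i(s)^2.
\]
The third term is bounded by $\max_i d_i^2 \cdot V(s)/c_1^2 = o_p(n^{-2\kappa_1})V(s)$. For the first two, condition on $\{X_i\}$ (and on $\hat f$ for the middle term); then $\varepsilon_i$ are independent, mean zero, with bounded fourth moment by A\ref{as: disturbances}, and the uniform bound $\max_i T_i(s)^2 \leq A_n^2 V(s)$ yields conditional second moment at most $CA_n^2V(s)^2$ (with an extra $\max_i d_i^2$ factor for the middle term, giving $o_p(n^{-2\kappa_1})$). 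A union bound over $\MS_n$, refined by Rosenthal-type control of fourth moments, produces
\[
\sup_{s \in \MS_n}|\hat V(s)/V(s) - 1| = o_p(n^{-\kappa_1}) + O_p\bigl(A_n \sqrt{\log p}\bigr),
\]
which, since $A_n\sqrt{\log p} \lesssim \sqrt{\log p/(nh_{\min})}$, is $o_p(n^{-\kappa_1})$ under the stated condition $\log p/(h_{\min}n^{1/2}) = o_p(n^{-\kappa_1})$.

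The main obstacle is the uniform lower bound $V(s) \gtrsim n^3 h^{2k+3}$ used in (a): point-wise in $i$, symmetric cancellation inside $T_i(s)$ can make it much smaller than its worst-case size, so one must identify a definite positive fraction of indices on which the cancellation is provably incomplete, and then carry this through simultaneously for all $s \in \MS_n$. The latter relies on uniform Bernstein-type control of empirical design counts in windows of width $h_{\min}$, which is precisely why the bandwidth lower bound $h_{\min} \geq (\log n)^2/(C_6 n)$ is imposed.
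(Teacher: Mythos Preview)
Your overall plan matches the paper's proof closely, and part (b) is essentially the paper's argument in a slightly cleaner form (bounding $\sum_i T_i(s)^2\le V(s)/c_1^2$ directly). Two gaps need attention.

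In part (a), your choice of subinterval $[x+h/2,x+3h/4]$ can fall outside $[s_l,s_r]$: condition (v) only guarantees that at least one half of $[x-h,x+h]$ is contained in the support, not both (take $x=s_r$). The paper handles this by splitting $\MS_n$ into $\{s_l+h\le x\}$ and $\{x+h\le s_r\}$ and placing the subinterval on the side where there is room. Moreover, showing that ``cancellation is incomplete'' is not a one-line consequence of strict positivity of $K$: for $X_i$ in your subinterval there are still $X_j$'s on both sides contributing opposite signs, and one must argue that the dominant side wins by a definite margin. The paper does this by choosing constants $c_K,C_K$ satisfying a carefully balanced inequality; your sketch skips this step.

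In part (c), the bound $O_p(A_n\sqrt{\log p})$ you assert for $\sup_{s}\bigl|\sum_i(\varepsilon_i^2-\sigma_i^2)T_i(s)^2/V(s)\bigr|$ is too optimistic under Assumption A\ref{as: disturbances}. The maximal inequality (Lemma~\ref{lem: maximal inequality CCK}) has a second term $\sqrt{\Ep[M^2]}\log p$ with $M\le A_n^2\max_i|\varepsilon_i^2-\sigma_i^2|$; since only $\Ep[\varepsilon_i^4]\le C_1$ is assumed, $\Ep[M^2]\le CnA_n^4$ is all one gets, producing the additional term $A_n^2\sqrt{n}\,\log p\asymp \log p/(h_{\min}\sqrt{n})$. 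Your appeal to ``Rosenthal-type control of fourth moments'' does not help, because Rosenthal's inequality for the $q$-th moment of the sum requires $\Ep[|\varepsilon_i^2-\sigma_i^2|^q]$, which for $q>2$ needs $\Ep[\varepsilon_i^{2q}]$ and is not available under A\ref{as: disturbances}. This second term is in fact the dominant one for small $h$, it is exactly the rate the paper derives, and it is precisely why the proposition carries the side condition $\log p/(h_{\min}n^{1/2})=o_p(n^{-\kappa_1})$. Your final sentence, which deduces $o_p(n^{-\kappa_1})$ from $A_n\sqrt{\log p}$ alone, therefore does not connect the estimate to the stated hypothesis.
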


Restrictions on the kernel $K(\cdot)$ imposed in this lemma are satisfied for most commonly used kernel functions including uniform, triangular, Epanechnikov, biweight, triweight, and tricube kernels. Note, however, that these restrictions exclude higher order kernels since those are necessarily negative at some points on their supports. Assumption A\ref{as: design difficult} imposes the restriction that the density of $X$ is bounded below from zero on its support. This is needed to make sure that $V(s)$ is sufficiently well separated from zero since $V(s)$ appears in the denominator of the test statistic $T$. Alternatively, one can truncate $V(s)$ from below with the truncation parameter converging to zero with an appropriate rate as the sample size $n$ increases. I do not consider this possibility for brevity of the paper.
Finally, I verify Assumption A\ref{as: weighting functions difficult}.

\begin{proposition}[Verifying Assumption A\ref{as: weighting functions difficult}]\label{lem: asumption 9 verification}
Suppose that the basic set of weighting functions is used. In addition, suppose that (i) Assumption A\ref{as: design difficult} holds and (ii) $K(\cdot)$ has the support $[-1,+1]$, is continuous, and strictly positive on the interior of its support. Then Assumption A\ref{as: weighting functions difficult} holds.
\end{proposition}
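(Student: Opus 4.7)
The plan is to construct, for each interval $[x_1,x_2] \subset [s_l,s_r]$ of length $h_n$, a weighting function index $s = (X_{i^*}, h^*) \in \MS_n$ whose kernel window is centered at a design point $X_{i^*}$ near the middle of $[x_1,x_2]$ with $h^* \asymp h_n$. The challenge is that $s$ must exist \emph{simultaneously} for every such interval on an event of probability approaching one, so the verification splits naturally into a bandwidth-selection step, a uniform design-point-coverage step, and a routine check of (i)--(iii).

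For the bandwidth, recall $H_n = \{h_{\max} u^l : h \geq h_{\min}\}$ with $u = 1/2$, $h_{\min} = 0.4 h_{\max}(\log n/n)^{1/3}$ and $h_{\max}$ random. Under Assumption A\ref{as: design difficult}, $h_{\max} = \max_{i,j}|X_i - X_j|/2 \to (s_r - s_l)/2 > 0$ almost surely, so $h_n/4 \leq h_{\max}$ w.p.a.1. Moreover, for the basic set $p = |\MS_n| = O(n \log n)$, so $\log p = O(\log n)$, and since $\beta \in (0,1]$ forces $1/(2\beta+3) < 1/3$, we obtain $h_n = (\log p / n)^{1/(2\beta+3)} \gg (\log n / n)^{1/3} \gtrsim h_{\min}$. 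Because $H_n$ is geometric with ratio $1/2$, there exists $h^* \in H_n$ with $h^* \in [h_n/8, h_n/4]$ w.p.a.1.

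Next I need that the ``core'' $[x_1 + h_n/4, x_2 - h_n/4]$ (of length $h_n/2$) contains at least one design point, uniformly in $[x_1,x_2]$. Partition $[s_l,s_r]$ into $M = \lceil 4(s_r-s_l)/h_n \rceil$ disjoint sub-intervals each of length $h_n/4$; any interval of length $h_n/2$ contains at least one of them in its interior. By A\ref{as: design difficult}, each sub-interval has mass at least $c_3 h_n/4$, so a union bound gives
\[
\Pr(\text{some sub-interval is empty}) \leq \tfrac{C}{h_n}\exp(-c_3 n h_n /4).
\]
A direct calculation gives $n h_n = n^{(2\beta+2)/(2\beta+3)}(\log p)^{1/(2\beta+3)} \gg \log n \gg \log(1/h_n)$ for $\beta \in (0,1]$, so this probability is $o(1)$. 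On the resulting event, for every $[x_1,x_2]$ of length $h_n$ there exists $X_{i^*} \in [x_1 + h_n/4, x_2 - h_n/4]$, and I set $s = (X_{i^*}, h^*)$.

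It remains to check (i)--(iii) for this $s$. Condition (i) is immediate because $[X_{i^*} - h^*, X_{i^*} + h^*] \subset [x_1,x_2]$, so $\supp Q(\cdot,\cdot,s) \subset [x_1,x_2]^2$. For (ii), on this support $|y_1 - y_2| \leq 2h^* \leq h_n$ and $K$ is bounded on its support, so $Q(y_1,y_2,s) \leq \|K\|_\infty^2 h_n^k =: C_4 h_n^k$. For (iii), I take
\[
[x_{l1},x_{r1}] = [X_{i^*} - 3h^*/4,\, X_{i^*} - h^*/4], \qquad [x_{l2},x_{r2}] = [X_{i^*} + h^*/4,\, X_{i^*} + 3h^*/4],
\]
which are disjoint subintervals of $[x_1,x_2]$, each of length $h^*/2 \geq h_n/16$ and separated by a gap of $h^*/2 \geq h_n/16$. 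For $(y_1,y_2)$ in their product, $(y_1 - X_{i^*})/h^* \in [-3/4,-1/4]$ and $(y_2 - X_{i^*})/h^* \in [1/4,3/4]$, so by continuity and strict positivity of $K$ on $(-1,1)$, both kernel factors are bounded below by $\underline{K} := \min_{t \in [1/4,3/4]} K(t) > 0$, while $|y_1 - y_2| \geq h^*/2 \geq h_n/16$ gives $|y_1-y_2|^k \geq (h_n/16)^k$. Hence $Q(y_1,y_2,s) \geq 16^{-k}\underline{K}^2 h_n^k$, and Assumption A\ref{as: weighting functions difficult} holds with $c_5 = 1/16$, $c_4 = 16^{-k}\underline K^2$, $C_4 = \|K\|_\infty^2$. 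The main obstacle is the uniformization across all intervals $[x_1,x_2]$, handled by the union-bound calculation in the second paragraph; this is exactly where $\beta > 0$ and $\log p = O(\log n)$ enter to guarantee $n h_n \gg \log n$.
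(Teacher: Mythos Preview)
Your proof is correct and follows essentially the same route as the paper's: pick a bandwidth $h^*\asymp h_n$ from $H_n$, show uniformly that the middle portion of each $[x_1,x_2]$ contains a design point (the paper cites its Lemma~\ref{lem: technical lemma} where you give a direct union bound), and take $s=(X_{i^*},h^*)$; the paper leaves the verification of (i)--(iii) implicit whereas you spell it out. The only cosmetic slip is that $\underline K$ should be $\min_{|t|\in[1/4,3/4]}K(t)$, since $K$ is not assumed symmetric.
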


\section{Additional Notation}\label{sub: additional notation}
I will use the following additional notation in Appendices C and D. Recall that $\{\epsilon_i\}$ is a sequence of independent $N(0,1)$ random variables that are independent of the data. Denote $e_i=\sigma_i\epsilon_i$ and $\hat{e}_i=\hat{\sigma}_i\epsilon_i$ for $i=\overline{1,n}$. Let
$$
w_i(s)=\sum_{1\leq j\leq n}\sign(X_j-X_i)Q(X_i,X_j,s),
$$
$$
a_i(s)=w_i(s)/(V(s))^{1/2}\text{ and }\hat{a}_i(s)=w_i(s)/(\hat{V}(s))^{1/2},
$$
$$
e(s)=\sum_{1\leq i\leq n}a_i(s)e_i\text{, and }\hat{e}(s)=\sum_{1\leq i\leq n}\hat{a}_i(s)\hat{e}_i,
$$
$$
\eps(s)=\sum_{1\leq i\leq n}a_i(s)\eps_i\text{ and }\hat{\eps}(s)=\sum_{1\leq i\leq n}\hat{a}_i(s)\eps_i,
$$
$$
f(s)=\sum_{1\leq i\leq n}a_i(s)f(X_i)\text{ and }\hat{f}(s)=\sum_{1\leq i\leq n}\hat{a}_i(s)f(X_i).
$$
Note that $T=\max_{s\in\MS_n}\sum_{1\leq i\leq n}\hat{a}_i(s)Y_i=\max_{s\in\MS_n}(\hat{f}(s)+\hat{\eps}(s))$. In addition, for any $\MS\subset\MS_n$, which may depend on $\{X_i\}_{1\leq i\leq n}$, and all $\eta\in(0,1)$, let $c_{\eta}^{\MS}$ denote the conditional $\eta$ quantile of $T^{\star}=T(\{X_i,Y^{\star}_i\},\{\hat{\sigma}_i\},\MS)$ given $\{X_i\}$, $\{\hat{\sigma}_i\}$, and $\MS$ where $Y^{\star}_i=\hat{\sigma}_i\epsilon_i$ for $i=\overline{1,n}$, and let $c_{\eta}^{\MS,0}$ denote the conditional $\eta$ quantile of $T^{\star}=T(\{X_i,Y^{\star}_i\},\{\sigma_i\},\MS)$ given $\{X_i\}$ and $\MS$ where $Y^{\star}_i=\sigma_i\epsilon_i$ for $i=\overline{1,n}$. Further, for $\eta\leq 0$, define $c_{\eta}^{\MS}$ and $c_{\eta}^{\MS,0}$ as $-\infty$, and for $\eta\geq 1$, define $c_{\eta}^{\MS}$ and $c_{\eta}^{\MS,0}$ as $+\infty$.

In the next section, I prove some auxiliary lemmas. For these lemmas, it will be convenient to use the following additional notation. Recall that $\MS_n$ is a set that is allowed to depend on $\{X_i\}$. In addition, I will need some subsets of $\MS_n$ that are selected from $\MS_n$ according to certain mappings that also depend $\{X_i\}$, i.e. $\MS=\MS(\MS_n,\{X_i\})\subset \MS_n$. Some results will be stated to hold uniformly over all mappings $\MS(\cdot,\cdot)$.

Let $\{\psi_n\}$ be a sequence of positive numbers converging to zero sufficiently slowly so that (i) $\log p/n^{\kappa_3}=o_p(\psi_n)$ (recall that by Assumption A\ref{as: growth condition}, $\log p/n^{\kappa_3}=o_p(1)$, and so such a sequence exists), (ii) uniformly over all $\MS(\cdot,\cdot)$ and $\eta\in(0,1)$, $\Pr(c_{\eta+\psi_n}^{\MS,0}<c_\eta^{\MS})=o(1)$ and $\Pr(c_{\eta+\psi_n}^{\MS}<c_\eta^{\MS,0})=o(1)$ where $\MS=\MS(\MS_n,\{X_i\})$ (Lemma \ref{lem: conditional and unconditional quantiles 1} establishes existence of such a sequence under Assumptions A\ref{as: disturbances}, A\ref{as: sigma}, A\ref{as: V}, and A\ref{as: growth condition} and Lemma \ref{lem: conditional and unconditional quantiles 3} establishes existence under Assumptions A\ref{as: disturbances}, A\ref{as: function estimation}, A\ref{as: V}, and A\ref{as: growth condition}). Let
$$
\MS_n^R=\{s\in\MS_n:f(s)>-c_{1-\gamma_n-\psi_n}^{\MS_n,0}\}.
$$
For $D=PI,OS,SD,R$, let $c_{\eta}^D=c_{\eta}^{\MS_n^D}$ and $c_{\eta}^{D,0}=c_{\eta}^{\MS_n^D,0}$ where $\MS_n^{PI}=\MS_n$. Note that $c_\eta^{PI,0}$ and $c_\eta^{R,0}$ are non-stochastic.

Moreover, I denote $\mathcal{V}=\max_{s\in\MS_n}(V(s)/\hat{V}(s))^{1/2}$. Finally, I denote the space of $k$-times continuously differentiable functions on $\RR$ by $\mathbb{C}^k(\RR,\RR)$. For $g\in\mathbb{C}^k(\RR,\RR)$, the symbol $g^{(r)}$ for $r\leq k$ denotes the $r$th derivative of $g$, and $\Vert g^{(r)}\Vert_\infty=\sup_{t\in\RR}|g^{(r)}(t)|$.

\section{Proofs for section \ref{sec: theory under high level conditions}}

In this Appendix, I first prove a sequence of auxiliary lemmas (Subsection
\ref{sec: auxiliary lemmas}). Then I present the proofs of the theorems
stated in Section \ref{sec: theory under high level conditions} (Subsection \ref{sec: proof of theorems}). In Subsection \ref{sec: auxiliary lemmas}, all results hold uniformly over all models $M\in\mathcal{M}$. For simplicity of notation, however, I drop index $M$ everywhere.

\subsection{Auxiliary Lemmas}\label{sec: auxiliary lemmas}

\begin{lemma}\label{lem: maximal inequality}
$\Ep[\max_{s\in\MS_n}|e(s)||\{X_i\}]\leq C(\log p)^{1/2}$ for some universal $C>0$.
\end{lemma}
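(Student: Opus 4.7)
The plan is to exploit the fact that, conditional on $\{X_i\}$, the collection $\{e(s): s \in \MS_n\}$ is a Gaussian process whose marginals are each standard normal by construction of the studentization. The result then follows from the standard maximal inequality for the maximum of $p$ (possibly dependent) standard Gaussian random variables.

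First, I would verify the marginal normalization. Fix $\{X_i\}$. Since $e_i = \sigma_i \epsilon_i$ and the $\epsilon_i$ are independent $N(0,1)$ and independent of the data, the variable $e(s) = \sum_i a_i(s) e_i$ is Gaussian with mean zero and conditional variance
\[
\sum_{1\leq i \leq n} a_i(s)^2 \sigma_i^2 \;=\; \frac{1}{V(s)} \sum_{1\leq i \leq n} \sigma_i^2 w_i(s)^2 \;=\; \frac{V(s)}{V(s)} \;=\; 1,
\]
using the definitions of $a_i(s)$, $w_i(s)$, and $V(s)$ recalled in the Additional Notation. Hence each $e(s)$ is marginally $N(0,1)$ conditional on $\{X_i\}$.

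Next, I would invoke a standard Gaussian maximal inequality: for any finite collection $Z_1,\dots,Z_p$ of $N(0,1)$ random variables (not necessarily independent), $\Ep[\max_{k\leq p} |Z_k|] \leq C(\log(2p))^{1/2}$ for a universal constant $C$, which can be proved by a Chernoff/union bound on $\Pr(\max_k |Z_k| > t)$. Applying this to $\{e(s)\}_{s \in \MS_n}$ conditionally on $\{X_i\}$ yields
\[
\Ep\Bigl[\max_{s\in\MS_n}|e(s)| \,\Big|\, \{X_i\}\Bigr] \;\leq\; C(\log p)^{1/2}
\]
(adjusting the constant to absorb the $\log 2$), which is the claim. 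Since every step is a conditional statement given $\{X_i\}$, no issue with the implicit dependence of $\MS_n$ on $\{X_i\}$ arises: $|\MS_n| = p$ is the relevant cardinality and the bound holds $\{X_i\}$-almost surely.

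There is no real obstacle here; the lemma is essentially a bookkeeping statement that studentization by $V(s)^{1/2}$ renders the Gaussian process marginally unit-variance, after which the logarithmic bound on the Gaussian maximum is textbook (see, e.g., Lemma 2.2.2 in \cite{VaartWellner1996}).
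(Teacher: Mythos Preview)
Your proposal is correct and follows essentially the same approach as the paper: verify that each $e(s)$ is $N(0,1)$ conditional on $\{X_i\}$ by the studentization, then apply the standard Gaussian maximal inequality (Lemma 2.2.2 in \cite{VaartWellner1996}). The paper's own proof is nearly identical, just more terse.
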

\begin{proof}
Note that by construction, conditional on $\{X_i\}$, $e(s)$ is distributed as a $N(0,1)$ random variable, and $|\MS_n|=p$. So, the result follows from lemma 2.2.2 in \cite{VaartWellner1996}.
\end{proof}

\begin{lemma}\label{lem: anticoncentration}
For all $\MS(\cdot,\cdot)$ and $\Delta>0$,
$$
\sup_{t\in\RR}\Pr\left(\max_{s\in\MS}e(s)\in[t,t+\Delta]|\{X_i\}\right)\leq C\Delta(\log p)^{1/2}
$$
and for all $\MS(\cdot,\cdot)$ and $(\eta,\delta)\in(0,1)^2$, 
$$
c_{\eta+\delta}^{\MS,0}-c_{\eta}^{\MS,0}\geq C\delta/(\log p)^{1/2}
$$ 
for some universal $C>0$ where $\MS=\MS(\MS_n,\{X_i\})$.
\end{lemma}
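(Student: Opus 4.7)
The plan is to derive both claims from the anti-concentration inequality for maxima of centered Gaussian vectors developed by Chernozhukov, Chetverikov, and Kato (cited in the paper via \cite{ChernozhukovChetverikov12}). The key observation is that, conditional on $\{X_i\}$, the collection $\{e(s): s\in\MS\}$ is a finite centered Gaussian vector of dimension at most $p$, and each marginal has unit variance:
$$
\Ep[e(s)^2 \mid \{X_i\}] = \sum_{1\leq i \leq n} a_i(s)^2 \sigma_i^2 = \frac{\sum_{1\leq i\leq n} w_i(s)^2 \sigma_i^2}{V(s)} = 1.
$$
Because the $\epsilon_i$ are independent of the data, Gaussianity and the unit-variance normalization are preserved conditionally on $\{X_i\}$, regardless of how $\MS=\MS(\MS_n,\{X_i\})$ is chosen.

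For part (a), I invoke directly the CCK anti-concentration inequality: for any centered Gaussian vector $(Z_1,\ldots,Z_N)$ with $N\leq p$ and $\Ep[Z_j^2]=1$ for all $j$,
$$
\sup_{t\in\RR}\Pr\left(\max_{1\leq j\leq N} Z_j \in [t,t+\Delta]\right) \leq C\Delta(\log p)^{1/2}
$$
for some universal $C>0$. Applying this conditionally with $N=|\MS|$ and $Z_s=e(s)$ yields the first claim. Since the inequality is pointwise in the conditioning $\sigma$-algebra, uniformity over $\MS(\cdot,\cdot)$ is automatic.

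For part (b), I invert the estimate from (a). Conditional on $\{X_i\}$, the distribution function $F_\MS(t):=\Pr(\max_{s\in\MS} e(s)\leq t\mid \{X_i\})$ is continuous (this already follows from (a) by letting $\Delta\downarrow 0$). Hence the quantiles $c_\eta^{\MS,0}$ satisfy
$$
\delta = F_\MS\bigl(c_{\eta+\delta}^{\MS,0}\bigr) - F_\MS\bigl(c_{\eta}^{\MS,0}\bigr) = \Pr\left(\max_{s\in\MS}e(s)\in\bigl(c_\eta^{\MS,0},c_{\eta+\delta}^{\MS,0}\bigr]\Bigm|\{X_i\}\right).
$$
Taking $t=c_\eta^{\MS,0}$ and $\Delta=c_{\eta+\delta}^{\MS,0}-c_\eta^{\MS,0}$ in part (a) gives $\delta \leq C\bigl(c_{\eta+\delta}^{\MS,0}-c_\eta^{\MS,0}\bigr)(\log p)^{1/2}$, and rearranging yields the claimed lower bound, with the constant $C$ in (b) the reciprocal of the one in (a).

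The main obstacle is simply identifying and invoking the correct CCK anti-concentration tool in (a); once this is in hand, part (b) is a one-line inversion exploiting continuity of the distribution function of the maximum. No further combinatorial or process-theoretic input is required, and in particular no asymptotic equicontinuity is needed, since everything is done pointwise in $n$ at the level of the finite Gaussian vector indexed by $\MS$.
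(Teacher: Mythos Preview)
Your proof is correct and follows essentially the same approach as the paper. The only minor difference is presentational: the paper invokes the CCK anti-concentration inequality in the form $\sup_t\Pr(\max_j W_j\in[t,t+\Delta])\leq 4\Delta(\Ep[\max_j W_j]+1)$ (Theorem 3 in \cite{ChernozhukovKato2011}) and then appeals to Lemma~\ref{lem: maximal inequality} to bound $\Ep[\max_{s\in\MS}e(s)\mid\{X_i\}]\leq C(\log p)^{1/2}$, whereas you combine these two steps by quoting the anti-concentration bound directly in its $(\log p)^{1/2}$ form. For part (b), the paper simply says ``follows from the result in the first claim,'' and your explicit inversion via continuity of $F_\MS$ is exactly the intended argument.
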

\begin{proof}
Theorem 3 in \cite{ChernozhukovKato2011} shows that if $W_1,\dots,W_p$ is a sequence of $N(0,1)$ random (not necessarily independent) variables, then for all $\Delta>0$,
$$
\sup_{t\in\RR}\Pr\left(\max_{1\leq j\leq p}W_j\in[t,t+\Delta]\right)\leq 4\Delta\left(\Ep\left[\max_{1\leq j\leq p}W_j\right]+1\right).
$$
Therefore,
$$
\sup_{t\in\RR}\Pr\left(\max_{s\in\MS}e(s)\in[t,t+\Delta]|\{X_i\}\right)\leq 4\Delta\left(\Ep\left[\max_{s\in\MS}e(s)|\{X_i\}\right]+1\right),
$$
and so the first claim follows from Lemma \ref{lem: maximal inequality}. The second claim follows from the result in the first claim.
\end{proof}

\begin{lemma}\label{lem: multiplied quantiles}
For all $\MS(\cdot,\cdot)$, $\eta\in(0,1)$, and $t\in\RR$, 
$$
c_{\eta-C|t|\log p/(1-\eta)}^{\MS,0}\leq c_\eta^{\MS,0}(1+t)\leq c_{\eta+C|t|\log p/(1-\eta)}^{\MS,0}
$$
for some universal $C>0$ where $\MS=\MS(\MS_n,\{X_i\})$.
\end{lemma}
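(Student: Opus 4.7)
The plan is to reduce the question to the Gaussian anticoncentration bound of Lemma \ref{lem: anticoncentration} applied to the conditional Gaussian supremum $Z = \max_{s\in\MS}e(s)$, which, given $\{X_i\}$, has mean zero marginals of unit variance by construction. One can restrict attention to the regime in which $\eta \pm C|t|\log p/(1-\eta) \in (0,1)$, since otherwise the corresponding outer quantile equals $\pm\infty$ by convention and the desired inequality is immediate.

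First, I would control the magnitude of the quantile itself. Lemma \ref{lem: maximal inequality} gives $\Ep[Z|\{X_i\}] \leq C\sqrt{\log p}$, and Borell's inequality (Proposition A.2.1 in \cite{VaartWellner1996}) applied to the centered Gaussian supremum $Z - \Ep[Z|\{X_i\}]$ yields two-sided concentration of order $e^{-r^2/2}$. Inverting these tail estimates shows that, in the non-trivial regime,
$$
|c_\eta^{\MS,0}| \leq C\sqrt{\log p} + \sqrt{2\log(2/(1-\eta))} \leq C_1\sqrt{\log p}/\sqrt{1-\eta}
$$
for a universal constant $C_1$, after absorbing the additive term by enlarging the constant (and using that $\log(1/x) \leq 1/x$ for $x\in(0,1]$).

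Next, I would apply the first assertion of Lemma \ref{lem: anticoncentration} with $\Delta = |t\,c_\eta^{\MS,0}|$ to the interval between $c_\eta^{\MS,0}$ and $c_\eta^{\MS,0}(1+t)$. Combining the resulting bound with the quantile estimate above yields
$$
\bigl|\Pr\bigl(Z \leq c_\eta^{\MS,0}(1+t)\,\big|\,\{X_i\}\bigr) - \eta\bigr| \leq C|t|\,|c_\eta^{\MS,0}|\,\sqrt{\log p} \leq C_2\,|t|\log p/(1-\eta),
$$
where I have used $1/\sqrt{1-\eta}\leq 1/(1-\eta)$ together with $\log p \geq 1$ (absorbed into the constant). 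Since $Z$ has a continuous conditional distribution (it is the maximum of finitely many Gaussians, which produce no atoms), the identity $\Pr(Z \leq c_\eta^{\MS,0}\,|\,\{X_i\}) = \eta$ holds, and the displayed inequality translates, by the definition of quantiles, into
$$
c_{\eta - C_2|t|\log p/(1-\eta)}^{\MS,0} \leq c_\eta^{\MS,0}(1+t) \leq c_{\eta + C_2|t|\log p/(1-\eta)}^{\MS,0},
$$
which is the desired conclusion after renaming $C_2$ to $C$.

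The main obstacle will be the first step, namely obtaining a clean $1/(1-\eta)$-type bound on $|c_\eta^{\MS,0}|$ uniformly in $\MS\subset\MS_n$ and in $\{X_i\}$, which requires careful bookkeeping of the universal constants in Borell's inequality and Lemma \ref{lem: maximal inequality} as well as a short argument handling the case $c_\eta^{\MS,0}<0$; the remaining steps are a direct application of the anticoncentration inequality already established in Lemma \ref{lem: anticoncentration}.
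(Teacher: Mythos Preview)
Your approach is correct and follows the same two-step route as the paper: bound the quantile, then invoke Lemma~\ref{lem: anticoncentration}. The paper is more economical in the first step, using Markov's inequality on $\max_{s\in\MS}|e(s)|$ (via Lemma~\ref{lem: maximal inequality}) to get $c_\eta^{\MS,0}\leq C\sqrt{\log p}/(1-\eta)$ directly, rather than Borell's inequality; since you immediately relax $1/\sqrt{1-\eta}$ to $1/(1-\eta)$, the extra precision from Borell is never exploited. The paper also applies the second assertion of Lemma~\ref{lem: anticoncentration} (the quantile-gap lower bound) directly instead of passing through probabilities and back, which saves a line.

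One caution on your display $|c_\eta^{\MS,0}|\leq C_1\sqrt{\log p}/\sqrt{1-\eta}$: this cannot hold uniformly for very small $\eta$ (take $|\MS|=1$, so $c_\eta^{\MS,0}=\Phi^{-1}(\eta)\to-\infty$ while the right-hand side stays bounded). The paper sidesteps this by bounding $c_\eta^{\MS,0}$ only from above; neither argument fully covers the negative-quantile regime, but this is immaterial for the downstream uses of the lemma, where $\eta$ is always of the form $1-\alpha-o(1)$.
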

\begin{proof}
Recall that $c_\eta^{\MS,0}$ is the conditional $\eta$ quantile of $\max_{s\in\MS}e(s)$ given $\{X_i\}$, and so combining Lemma \ref{lem: maximal inequality} and Markov inequality shows that $c_\eta^{\MS,0}\leq C(\log p)^{1/2}/(1-\eta)$ for some universal $C>0$. Therefore, Lemma \ref{lem: anticoncentration} gives
\begin{equation}\label{eq: quantile with mistakes}
c_{\eta+C|t|\log p/(1-\eta)}^{\MS,0}-c_\eta^{\MS,0}\geq C|t|(\log p)^{1/2}/(1-\eta)\geq |t|c_\eta^{\MS,0}
\end{equation}
if $C>0$ is sufficiently large in (\ref{eq: quantile with mistakes}). The lower bound follows similarly.
\end{proof}

\begin{lemma}\label{lem: gaussian approximation}
Under Assumptions A\ref{as: disturbances} and A\ref{as: growth condition}, uniformly over all $\MS(\cdot,\cdot)$ and $\eta\in(0,1)$,
$$
\Pr\left(\max_{s\in\MS}\eps(s)\leq c_{\eta}^{\MS,0}\right)=\eta+o(1)\text{ and }\Pr\left(\max_{s\in\MS}(-\eps(s))\leq c_{\eta}^{\MS,0}\right)=\eta+o(1)
$$
where $\MS=\MS(\MS_n,\{X_i\})$.
\end{lemma}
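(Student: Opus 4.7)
The plan is to apply the high-dimensional Gaussian coupling of \cite{ChernozhukovChetverikov12} conditional on the design $\{X_i\}_{i=1}^n$. Working conditionally on $\{X_i\}$, the quantity $\eps(s)=\sum_{i=1}^n a_i(s)\eps_i$ is a weighted sum of independent mean-zero random variables whose conditional fourth moments are uniformly bounded by $C_1$ under Assumption A\ref{as: disturbances}. By the definition of $V(s)$, the weights satisfy $\sum_i a_i(s)^2=1$, so $\mathrm{Var}(\eps(s)|\{X_i\})=1$ for every $s$. The Gaussian analogue $e(s)=\sum_{i=1}^n a_i(s)e_i$ then has, conditional on $\{X_i\}$, exactly the same covariance structure over $s$ as $(\eps(s))_{s\in\MS}$, and the uniform coefficient bound is $\max_{s\in\MS}\max_{i}|a_i(s)|\leq A_n$.

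In this setup, the growth condition $nA_n^4(\log(pn))^7=o_p(1)$ in A\ref{as: growth condition}(i) is precisely the condition under which the CCK bound
$$
\sup_{t\in\RR}\bigl|\Pr(\max_{s\in\MS}\eps(s)\leq t|\{X_i\})-\Pr(\max_{s\in\MS}e(s)\leq t|\{X_i\})\bigr|=o_p(1)
$$
holds; and because the CCK bound depends on $|\MS|$ only through $\log|\MS|\leq\log p$, this convergence is automatically uniform over all mappings $\MS(\cdot,\cdot)$ with $\MS\subset\MS_n$. Setting $t=c_\eta^{\MS,0}$, which by definition is the conditional $\eta$-quantile of $\max_{s\in\MS}e(s)$ given $\{X_i\}$, yields $\Pr(\max_{s\in\MS}\eps(s)\leq c_\eta^{\MS,0}|\{X_i\})=\eta+o_p(1)$, uniformly in $\eta\in(0,1)$ and $\MS(\cdot,\cdot)$. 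Taking expectations (the integrand is bounded by one, so dominated convergence applies) gives the first claim.

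For the second claim, apply the same argument to the process $\{-\eps(s)\}_{s\in\MS}$. Since the weights $-a_i(s)$ have the same absolute-value bound and the variance normalization is preserved, the CCK bound yields the analogous Kolmogorov-distance approximation of $\max_{s\in\MS}(-\eps(s))$ by $\max_{s\in\MS}(-e(s))$. The process $\{-e(s)\}_{s\in\MS}$ has the same conditional law as $\{e(s)\}_{s\in\MS}$, being a centered Gaussian process with identical covariance, so the conditional $\eta$-quantile of $\max_{s\in\MS}(-e(s))$ is also $c_\eta^{\MS,0}$, and the same reasoning as above delivers $\Pr(\max_{s\in\MS}(-\eps(s))\leq c_\eta^{\MS,0})=\eta+o(1)$ uniformly.

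The main obstacle is ensuring that CCK applies in its conditional form with only fourth-moment control on the $\eps_i$'s and with the $o_p(1)$ statement holding uniformly over $\eta$ and over data-dependent sub-collections $\MS$; both points are handled by the precise shape of A\ref{as: growth condition}(i), whose $(\log(pn))^7$ factor is exactly the polylogarithmic cost of such uniformity in the CCK theorem under a fourth-moment assumption.
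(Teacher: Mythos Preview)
Your proof is correct and follows the same approach as the paper: apply the CCK Gaussian approximation (Lemma~\ref{lem: gaussian approximation original}, case~(ii)) conditional on $\{X_i\}$ using the fourth-moment bound from A\ref{as: disturbances} and the growth condition A\ref{as: growth condition}(i), then integrate out to pass from the conditional $o_p(1)$ to an unconditional $o(1)$. One small slip: the correct normalization is $\sum_i a_i(s)^2\sigma_i^2=1$ rather than $\sum_i a_i(s)^2=1$ (the paper applies CCK with $z_{is}=\sqrt{n}a_i(s)\sigma_i$ and $u_i=\eps_i/\sigma_i$), though your variance conclusion $\mathrm{Var}(\eps(s)\mid\{X_i\})=1$ is still correct.
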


\begin{proof}
Note that $\sum_{1\leq i\leq n}(a_i(s)\sigma_i)^2=1$. In addition, by Assumption A\ref{as: growth condition}, $nA_n^4(\log(pn))^7=o_p(1)$, and so there exists a sequence $\{\omega_n\}$ of positive numbers such that $nA_n^4(\log(pn))^7=o_p(\omega_n)$ and $\omega_n=o(1)$. Let $\mathcal{A}_n$ denote the event that $nA_n^4(\log(pn))^7>\omega_n$. Then $\Pr(\mathcal{A}_n)=o(1)$. Further, note that $\Ep[\eps_i^2/\sigma_i^2|X_i]=1$, and since $\Ep[\eps_i^4|X_i]\leq C_1$ and $\sigma_i\geq c_1$ by Assumption A\ref{as: disturbances}, $\Ep[\eps_i^4/\sigma_i^4|X_i]\leq C$ for some $C>0$. Therefore, applying Lemma \ref{lem: gaussian approximation original}, case (ii), with $z_{is}=\sqrt{n}a_i(s)\sigma_i$ and $u_i=\eps_i/\sigma_i$ conditional on $\{X_i\}$ shows that there exists a sequence $\{\omega'\}$ of positive numbers converging to zero and depending only on $\{\omega_n\}$ such that outside of $\mathcal{A}_n$,
$$
\left|\Pr\left(\max_{s\in\MS}\eps(s)\leq c_\eta^{\MS,0}|\{X_i\}\right)-\eta\right|\leq \omega_n'.
$$ 
Since $\Pr(\mathcal{A}_n)=o(1)$, this implies that
$$
\Pr\left(\max_{s\in\MS}\eps(s)\leq c_\eta^{\MS,0}\right)=\eta+o(1),
$$
which gives the first asserted claim. The second claim follows by replacing $\eps(s)$ by $-\eps(s)$.
Note that since the sequence $\{\omega_n'\}$ depends only on the sequence $\{\omega_n\}$, it follows that the result of this lemma holds uniformly over all models $M\in\mathcal{M}$.
\end{proof}

\begin{lemma}\label{lem: conditional and unconditional quantiles 1}
Under Assumptions A\ref{as: disturbances}, A\ref{as: sigma}, A\ref{as: V}, and A\ref{as: growth condition}, there exists a sequence $\{\psi_n\}$ of positive numbers converging to zero such that uniformly over all $\MS(\cdot,\cdot)$ and $\eta\in(0,1)$, $\Pr(c_{\eta+\psi_n}^{\MS,0}<c_\eta^{\MS})=o(1)$ and $\Pr(c_{\eta+\psi_n}^{\MS}<c_\eta^{\MS,0})=o(1)$
where $\MS=\MS(\MS_n,\{X_i\})$.
\end{lemma}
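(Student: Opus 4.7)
The plan is to couple the Gaussian processes $\{e(s):s\in\MS_n\}$ and $\{\hat e(s):s\in\MS_n\}$ through the common multipliers $\{\epsilon_i\}$, control the uniform distance between their suprema by Lemma \ref{lem: maximal inequality}, and then convert that distance into a quantile gap via the anti-concentration bound of Lemma \ref{lem: anticoncentration}. Since $\hat a_i(s)=\mathcal V(s)\,a_i(s)$ with $\mathcal V(s)=(V(s)/\hat V(s))^{1/2}$, the starting point is the decomposition
\begin{equation*}
\hat e(s)-e(s) \;=\; (\mathcal V(s)-1)\,e(s) \;+\; \mathcal V(s)\sum_{i=1}^{n} a_i(s)\,(\hat\sigma_i-\sigma_i)\,\epsilon_i.
\end{equation*}

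Next I would fix deterministic $\zeta_n=o(n^{-\kappa_2})$ and $\zeta_n'=o(n^{-\kappa_3})$ so that by Assumptions A\ref{as: sigma} and A\ref{as: V} the event $\mathcal G=\{\max_i|\hat\sigma_i-\sigma_i|\leq \zeta_n,\ \max_{s\in\MS_n}|\mathcal V(s)-1|\leq \zeta_n'\}$ has probability $1-o(1)$. Using $\sum_i a_i(s)^2\sigma_i^2=1$ together with $\sigma_i\geq c_1$ from A\ref{as: disturbances}, the centered Gaussian process $\sum_i a_i(s)(\hat\sigma_i-\sigma_i)\epsilon_i$ has conditional variance at most $\zeta_n^2/c_1^2$ on $\mathcal G$. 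Applying Lemma \ref{lem: maximal inequality} termwise gives, on $\mathcal G$,
\begin{equation*}
\Ep\!\left[\max_{s\in\MS_n}|\hat e(s)-e(s)|\;\Big|\;\{X_i\},\{\hat\sigma_i\}\right] \leq C(\zeta_n+\zeta_n')\sqrt{\log p},
\end{equation*}
and by Markov, $\Pr(\max_{s\in\MS_n}|\hat e(s)-e(s)|>\tau_n\mid\{X_i\},\{\hat\sigma_i\})\leq 1/\lambda_n=:\delta_n$ for any $\lambda_n\to\infty$ with $\tau_n=C\lambda_n(\zeta_n+\zeta_n')\sqrt{\log p}$.

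With $F_\MS$ and $\hat F_\MS$ denoting the conditional cdfs of $\max_{s\in\MS}e(s)$ and $\max_{s\in\MS}\hat e(s)$ given $(\{X_i\},\{\hat\sigma_i\})$, this coupling yields $F_\MS(c)\leq \hat F_\MS(c+\tau_n)+\delta_n$ and $\hat F_\MS(c)\leq F_\MS(c+\tau_n)+\delta_n$ for every $c$ on $\mathcal G$, which in quantile form read $c_\eta^\MS\leq c_{\eta+\delta_n}^{\MS,0}+\tau_n$ and $c_\eta^{\MS,0}\leq c_{\eta+\delta_n}^\MS+\tau_n$. Chaining the first with Lemma \ref{lem: anticoncentration} in the form $c_{\eta+\delta_n}^{\MS,0}+\tau_n\leq c_{\eta+\delta_n+C\tau_n\sqrt{\log p}}^{\MS,0}$ gives $c_\eta^\MS\leq c_{\eta+\psi_n}^{\MS,0}$ on $\mathcal G$ with $\psi_n=\delta_n+C\tau_n\sqrt{\log p}$. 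For the reverse direction, apply anti-concentration as $c_\eta^{\MS,0}+\tau_n\leq c_{\eta+C\tau_n\sqrt{\log p}}^{\MS,0}$, then evaluate the second coupling bound at the shifted level $\eta+C\tau_n\sqrt{\log p}$ to obtain $c_{\eta+C\tau_n\sqrt{\log p}}^{\MS,0}\leq c_{\eta+\psi_n}^\MS+\tau_n$; subtracting yields $c_\eta^{\MS,0}\leq c_{\eta+\psi_n}^\MS$ on $\mathcal G$, with the same $\psi_n$.

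The remaining task is to verify that $\psi_n$ can be taken as a deterministic sequence vanishing as $n\to\infty$, uniformly in $\MS(\cdot,\cdot)$ and $\eta\in(0,1)$. Intersecting $\mathcal G$ with the event $\{\log p\leq \omega_n n^{\kappa_2\wedge\kappa_3}\}$ supplied by A\ref{as: growth condition}(ii) for a suitably slow $\omega_n\to 0$ makes $(\zeta_n+\zeta_n')\log p=o(1)$ deterministically, so choosing $\lambda_n\to\infty$ slowly enough drives both $\delta_n=1/\lambda_n$ and $\tau_n\sqrt{\log p}=C\lambda_n(\zeta_n+\zeta_n')\log p$ to zero. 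The main obstacle will be the joint management of the random $p=|\MS_n|$ and the random nuisance $\hat\sigma_i$ while maintaining uniformity over the selection map $\MS(\cdot,\cdot)$ and over $\eta$; this is resolved because every bound produced above depends on the data only through the universal envelopes $\zeta_n,\zeta_n',\omega_n n^{\kappa_2\wedge\kappa_3}$, none of which involve $\MS$ or $\eta$.
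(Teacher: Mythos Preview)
Your proof is correct and follows essentially the same route as the paper: the same decomposition $\hat e(s)-e(s)=(\mathcal V(s)-1)e(s)+\mathcal V(s)\sum_i a_i(s)(\hat\sigma_i-\sigma_i)\epsilon_i$, the same use of Lemma \ref{lem: maximal inequality} to bound each piece by $O((\zeta_n+\zeta_n')\sqrt{\log p})$, and the same conversion of the additive gap into a quantile gap via Lemma \ref{lem: anticoncentration}. Your treatment of the determinism of $\psi_n$ via intersecting with $\{\log p\leq \omega_n n^{\kappa_2\wedge\kappa_3}\}$ is in fact slightly more explicit than the paper's, which writes $\psi_n=\tilde\psi_n+C(\log p)n^{-\kappa_2\wedge\kappa_3}$ and appeals to A\ref{as: growth condition} without spelling out how the random $p$ is handled.
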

\begin{proof}
Denote
$$
T^{\MS}=\max_{s\in\MS}\hat{e}(s)=\max_{s\in\MS}\sum_{1\leq i\leq n}\hat{a}_{i}(s)\hat{\sigma}_{i}\epsilon_{i}\text{ and }T^{\MS,0}=\max_{s\in\MS}e(s)=\max_{s\in\MS}\sum_{1\leq i\leq n}a_{i}(s)\sigma_{i}\epsilon_{i}.
$$
In addition, denote
$$
p_1=\max_{s\in\MS}|e(s)|\max_{s\in\MS}\left|1-(V(s)/\hat{V}(s))^{1/2}\right|,
$$
$$
p_2=\max_{s\in\MS}\left|\sum_{1\leq i\leq n} a_i(s)(\hat{\sigma}_i-\sigma_i)\epsilon_i\right|\max_{s\in\MS}(V(s)/\hat{V}(s))^{1/2}.
$$
Then $|T^{\MS}-T^{\MS,0}|\leq p_1+p_2$. Combining Lemma \ref{lem: maximal inequality} and Assumption A\ref{as: V} gives
$$
p_1=o_p\left((\log p)^{1/2}n^{-\kappa_3}\right).
$$
Consider $p_2$. Conditional on $\{\hat{\sigma}_i\}$, $(\hat{\sigma}_i-\sigma_i)\epsilon_i$ is distributed as a $N(0,(\hat{\sigma}_i-\sigma_i)^2)$ random variable, and so applying the argument like that in Lemma \ref{lem: maximal inequality} conditional on $\{X_i,\hat{\sigma}_i\}$ and using Assumption A\ref{as: sigma} gives
$$
\max_{s\in\MS}\left|\sum_{1\leq i\leq n}a_i(s)(\hat{\sigma}_i-\sigma_i)\epsilon_i\right|=o_p\left((\log p)^{1/2}n^{-\kappa_2}\right).
$$
Since $\max_{s\in\MS}(V(s)/\hat{V}(s))^{1/2}\rightarrow_p 1$ by Assumption A\ref{as: V}, this implies that 
$$
p_2=o_p\left((\log p)^{1/2}n^{-\kappa_2}\right).
$$
Therefore, $T^{\MS}-T^{\MS,0}=o_p((\log p)^{1/2}n^{-\kappa_2\wedge\kappa_3})$, and so there exists a sequence $\{\tilde{\psi}_{n}\}$
of positive numbers converging to zero such that
$$
\Pr\left(|T^{\MS}-T^{\MS,0}|>(\log p)^{1/2}n^{-\kappa_2\wedge\kappa_3}\right)=o(\tilde{\psi}_{n}).
$$
Hence,
$$
\Pr\left(\Pr\left(|T^{\MS}-T^{\MS,0}|>(\log p)^{1/2}n^{-\kappa_2\wedge\kappa_3}|\{X_i,\hat{\sigma}_i\}\right)>\tilde{\psi}_{n}\right)\rightarrow0.
$$
Let $\mathcal{A}_{n}$ denote the event that 
$$
\Pr\left(|T^{\MS}-T^{\MS,0}|>(\log p)^{1/2}n^{-\kappa_2\wedge\kappa_3}|\{X_i,\hat{\sigma}_i\}\right)\leq\tilde{\psi}_{n}.
$$
I will take $\psi_{n}=\tilde{\psi}_{n}+C(\log p) n^{-\kappa_2\wedge\kappa_3}$ for a constant $C$ that is larger than that in the statement of Lemma \ref{lem: anticoncentration}. By assumption A\ref{as: growth condition},
$\psi_{n}\rightarrow0$. Then note that
$$
\Pr\left(T^{\MS,0}\leq c_{\eta}^{\MS,0}|\{X_i,\hat{\sigma}_i\}\right)\geq\eta\text{ and }\Pr\left(T^{\MS}\leq c_{\eta}^{\MS}|\{X_i,\hat{\sigma}_i\}\right)\geq\eta
$$
for any $\eta\in(0,1)$. 
So, on $\mathcal{A}_{n}$,
\begin{align*}
\eta+\tilde{\psi}_{n} & \leq  \Pr\left(T^{\MS,0}\leq c_{\eta+\tilde{\psi}_{n}}^{\MS,0}|\{X_i,\hat{\sigma}_i\}\right)\\
 & \leq  \Pr\left(T^{\MS}\leq c_{\eta+\tilde{\psi}_{n}}^{\MS,0}+(\log p)^{1/2}n^{-\kappa_2\wedge\kappa_3}|\{X_i,\hat{\sigma}_i\}\right)+\tilde{\psi}_{n}
  \leq  \Pr\left(T^{\MS}\leq c_{\eta+\psi_{n}}^{\MS,0}|\{X_i,\hat{\sigma}_i\}\right)+\tilde{\psi}_{n}
\end{align*}
where the last inequality follows from Lemma \ref{lem: anticoncentration}. Therefore,
on $\mathcal{A}_{n}$, $c_{\eta}^{\MS}\leq c_{\eta+\psi_{n}}^{\MS,0}$,
i.e. $\Pr(c_{\eta+\psi_{n}}^{\MS,0}<c_{\eta}^{\MS})=o(1)$.
The second claim follows similarly.
\end{proof}

\begin{lemma}\label{lem: conditional and unconditional quantiles 2}
Let $c_\eta^{\MS,1}$ denote the conditional $\eta$ quantile of $T^{\MS,1}=\max_{s\in\MS}\sum_{1\leq i\leq n}a_i(s)\eps_i\epsilon_i$ given $\{X_i,\eps_i\}$. Let Assumptions A\ref{as: disturbances}, A\ref{as: function estimation}, and A\ref{as: growth condition} hold. Then there exists a sequence $\{\tilde{\psi}_n\}$ of positive numbers converging to zero such that uniformly over all $\MS(\cdot,\cdot)$ and $\eta\in(0,1)$,  $\Pr(c_{\eta+\tilde{\psi}_n}^{\MS,0}< c_\eta^{\MS,1})=o(1)$ and $\Pr(c_{\eta+\tilde{\psi}_n}^{\MS,1}<c_\eta^{\MS,0})=o(1)$
where $\MS=\MS(\MS_n,\{X_i\})$.
\end{lemma}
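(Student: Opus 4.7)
The plan is to mirror the structure of Lemma \ref{lem: conditional and unconditional quantiles 1}: first bound the distance between $T^{\MS,1}$ and $T^{\MS,0}$ in an appropriate sense, and then convert that bound into a quantile statement using the Gaussian anti-concentration inequality of Lemma \ref{lem: anticoncentration}. The essential difference from Lemma \ref{lem: conditional and unconditional quantiles 1} is that $T^{\MS,1}-T^{\MS,0}=\max_s\sum_i a_i(s)\eps_i\epsilon_i-\max_s\sum_i a_i(s)\sigma_i\epsilon_i$ cannot be bounded pointwise, since $\eps_i$ is a random variable with standard deviation $\sigma_i$ rather than a small perturbation of $\sigma_i$. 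So instead of a pathwise bound I would exploit that, conditional on $\{X_i,\eps_i\}$, $T^{\MS,1}$ is itself the maximum of a centered Gaussian vector indexed by $\MS$ with covariance
$$K_1^\eps(s,s')=\sum_{i=1}^n a_i(s)a_i(s')\eps_i^2,$$
while $T^{\MS,0}$ given $\{X_i\}$ is a centered Gaussian with covariance $K_0(s,s')=\sum_i a_i(s)a_i(s')\sigma_i^2$. By A\ref{as: disturbances}, $\Ep[\eps_i^2\mid X_i]=\sigma_i^2$, so $\Ep[K_1^\eps\mid\{X_i\}]=K_0$, and the comparison reduces to showing that $K_1^\eps$ is uniformly close to $K_0$.

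The main step is to apply the high-dimensional Gaussian comparison inequality of Chernozhukov, Chetverikov, and Kato conditionally on $\{X_i,\eps_i\}$ to obtain
$$\sup_{t\in\RR}\bigl|\Pr(T^{\MS,1}\le t\mid\{X_i,\eps_i\})-\Pr(T^{\MS,0}\le t\mid\{X_i\})\bigr|\le C\,\Delta_n^{1/3}(\log p)^{2/3},$$
where $\Delta_n=\max_{s,s'\in\MS}|K_1^\eps(s,s')-K_0(s,s')|$, and to show $\Delta_n^{1/3}(\log p)^{2/3}=o_p(\tilde\psi_n)$ for a sequence $\tilde\psi_n\to 0$. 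Each summand $a_i(s)a_i(s')(\eps_i^2-\sigma_i^2)$ has conditional mean zero given $\{X_i\}$, is bounded in absolute value by $A_n^2(\eps_i^2+\sigma_i^2)$, and the sum has conditional variance of order $A_n^2$ by $\sum_i a_i(s)^2\le 1/c_1^2$ and $|a_i(s)|\le A_n$. Because only the fourth moment of $\eps_i$ is available under A\ref{as: disturbances}, I would truncate $\eps_i^2$ at a level $M_n$ (paying an error $o(1)$ by Markov), apply a Bernstein-type exponential inequality to the truncated summands, and then take a union bound over the $p^2$ pairs. The growth condition $\log p=o_p(n^{(1/4)\wedge\kappa_1\wedge\kappa_3})$ that A\ref{as: function estimation} delivers via A\ref{as: growth condition}(ii) is exactly what is needed to calibrate $M_n$ so that both the truncation error and the Bernstein tail are $o_p(\tilde\psi_n^3/(\log p)^2)$.

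Finally, on the event that the conditional Kolmogorov distance is below $\tilde\psi_n/2$, Lemma \ref{lem: anticoncentration} (applied exactly as in the closing lines of the proof of Lemma \ref{lem: conditional and unconditional quantiles 1}) converts this CDF bound into $c_\eta^{\MS,1}\le c_{\eta+\tilde\psi_n}^{\MS,0}$ and, symmetrically, $c_\eta^{\MS,0}\le c_{\eta+\tilde\psi_n}^{\MS,1}$; since the complementary event has probability $o(1)$ uniformly over $\MS(\cdot,\cdot)$ and $\eta\in(0,1)$ (all bounds depend on $\MS$ only through $p$ and on the model only through the uniform moment and growth constants of the class $\mathcal M$), this is the claim. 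The principal technical obstacle will be the second step: a naïve Chebyshev union bound over $p^2$ pairs is far too lossy when $p$ is nearly exponential in $n$, so the truncation level $M_n$ has to be threaded carefully so that the per-summand bound $A_n^2 M_n^2$ and the variance factor in the Bernstein tail combine with $\log p$ to give a rate small enough after the $(\cdot)^{1/3}(\log p)^{2/3}$ inflation from the Gaussian comparison.
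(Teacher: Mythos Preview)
Your plan is correct and shares the paper's architecture: both recognize that, conditional on $\{X_i,\eps_i\}$, $T^{\MS,1}$ is the maximum of a centered Gaussian vector with covariance $\Sigma^1_{s_1 s_2}=\sum_i a_i(s_1)a_i(s_2)\eps_i^2$, compare it to $T^{\MS,0}$ via the Gaussian comparison inequality, and then convert the CDF bound into a quantile inclusion through anti-concentration. The two executions differ in details. For the comparison step, the paper uses the smooth-indicator form (Lemma~\ref{lem: gaussian comparison}) applied to a function $g_n$ calibrated between $c^{\MS,0}_{\eta+\tilde\psi_n/2}$ and $c^{\MS,0}_{\eta+\tilde\psi_n}$, rather than the Kolmogorov-distance form $\Delta_\Sigma^{1/3}(\log p)^{2/3}$ you invoke; both routes reduce to the same requirement $(\log p)^2\Delta_\Sigma=o_p(1)$.

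The more substantive difference is in how $\Delta_\Sigma=\max_{s_1,s_2}|\Sigma^1_{s_1 s_2}-\Sigma^2_{s_1 s_2}|$ is controlled. The paper applies the ready-made maximal inequality of Lemma~\ref{lem: maximal inequality CCK} to $W_{i,(s_1,s_2)}=a_i(s_1)a_i(s_2)\eps_i^2$ conditionally on $\{X_i\}$, obtaining in one line
\[
\Ep[\Delta_\Sigma\mid\{X_i\}]\le C\Bigl(A_n\sqrt{\log p}+A_n^2(\log p)\,\Ep[\max_i\eps_i^4]^{1/2}\Bigr)\le C\bigl(A_n\sqrt{\log p}+A_n^2 n^{1/2}\log p\bigr),
\]
using only the fourth-moment bound from A\ref{as: disturbances}. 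This sidesteps entirely the truncation-plus-Bernstein-plus-union-bound program you outline; the ``principal technical obstacle'' you anticipate is absorbed into that lemma. Your approach can be made to work with a truncation level $M_n=n^{1/2}\omega_n$ for $\omega_n\to\infty$ slowly enough that $\omega_n=o((\log(pn))^{1/2})$, but it is noticeably more laborious.

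One small correction: the rate driver here is A\ref{as: growth condition}(i), i.e.\ $nA_n^4(\log(pn))^7=o_p(1)$, which gives $A_n^2 n^{1/2}(\log p)^3=o_p(1)$; A\ref{as: growth condition}(ii) (activated by A\ref{as: function estimation}) supplies only the auxiliary fact $\log p=o(n^{1/4})$ used to verify $A_n(\log p)^{5/2}=o_p(1)$. Your write-up attributes the key input to A\ref{as: growth condition}(ii) alone, which understates the role of (i).
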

\begin{proof}
Let $Z^1=\{\sum_{1\leq i\leq n}a_i(s)\eps_i\epsilon_i\}_{s\in\MS}$ and $Z^2=\{\sum_{1\leq i\leq n}a_i(s)\sigma_i\epsilon_i\}_{s\in\MS}$. Conditional on $\{X_i,\eps_i\}$, these are zero-mean Gaussian vectors in $\RR^{|\MS|}$ with covariances $\Sigma^1$ and $\Sigma^2$ given by
$$
\Sigma^1_{s_1s_2}=\sum_{1\leq i\leq n}a_i(s_1)a_i(s_2)\eps_i^2\text{ and }\Sigma^2_{s_1s_2}=\sum_{1\leq i\leq n}a_i(s_1)a_i(s_2)\sigma_i^2.
$$
Let $\Delta_{\Sigma}=\max_{s_1,s_2\in\MS}|\Sigma^1_{s_1s_2}-\Sigma^2_{s_1s_2}|$. Applying Lemma \ref{lem: maximal inequality CCK} shows that under Assumption A\ref{as: disturbances}, for some $C>0$,
$$
\Ep\left[\Delta_{\Sigma}|\{X_i\}\right]\leq C\left(A_n\sqrt{\log p}+A_n^2\log p\Ep\left[\max_{1\leq i\leq n}\varepsilon_i^4|\{X_i\}\right]^{1/2}\right).
$$
Further, $\Ep[\max_{1\leq i\leq n}\varepsilon_i^4|\{X_i\}]\leq \sum_{1\leq i\leq n}\Ep[\varepsilon_i^4|\{X_i\}]\leq Cn$ for some $C>0$ by Assumption A\ref{as: disturbances}, and so Assumption A\ref{as: growth condition} implies that
\begin{equation}\label{eq: delta sigma bound}
(\log p)^2\Delta_{\Sigma}=o_p(1)
\end{equation}
because $A_n^2(\log p)^3n^{1/2}=o_p(1)$ and $A_n(\log p)^{5/2}=o_p(1)$ (to verify the latter equation, recall that $\log p/n^{1/4}=o(1)$ by Assumption A\ref{as: growth condition}; taking square of this equation and multiplying it by $A_n^2(\log p)^3n^{1/2}=o_p(1)$ gives $A_n(\log p)^{5/2}=o_p(1)$).
Therefore, it follows that there exists a sequence $\{\tilde{\psi}_n\}$ of positive numbers converging to zero such that 
\begin{equation}\label{eq: some bounds 1}
(\log p)^2\Delta_{\Sigma}=o_p(\tilde{\psi}_n^4).
\end{equation}
Let $g\in\mathbb{C}^2(\RR,\RR)$ be a function satisfying $g(t)=1$ for $t\leq 0$, $g(t)=0$ for $t\geq 1$, and $g(t)\in[0,1]$ for $t\in[0,1]$, and let $g_n(t)=g((t-c_{\eta+\tilde{\psi}_n/2}^{\MS,0})/(c_{\eta+\tilde{\psi}_n}^{\MS,0}-c_{\eta+\tilde{\psi}_n/2}^{\MS,0}))$. Then for some $C>0$, Lemma \ref{lem: anticoncentration} shows that
\begin{eqnarray*}
&&\Vert g_n^{(1)}\Vert_\infty\leq \|g^{(1)}\|_{\infty}/\left(c_{\eta+\tilde{\psi}_n}^{\MS,0}-c_{\eta+\tilde{\psi}_n/2}^{\MS,0}\right)\leq C(\log p)^{1/2}/\tilde{\psi}_n,\\
&&\Vert g_n^{(2)}\Vert_\infty\leq \|g^{(2)}\|_{\infty}/\left(c_{\eta+\tilde{\psi}_n}^{\MS,0}-c_{\eta+\tilde{\psi}_n/2}^{\MS,0}\right)^2\leq C(\log p)/\tilde{\psi}_n^2.
\end{eqnarray*}
Applying Lemma \ref{lem: gaussian comparison} gives for some $C>0$,
\begin{align}
D_n&=\left|\Ep\left[g_n\(\max_{s\in\MS}Z^1_s\)-g_n\(\max_{s\in\MS}Z^2_s\)|\{X_i,\eps_n\}\right]\right|\label{eq: bound on D1}\\
&\leq C\((\log p)\Delta_\Sigma/\tilde{\psi}_n^2+(\log p)(\Delta_\Sigma)^{1/2}/\tilde{\psi}_n\)=o_p(\tilde{\psi}_n)\label{eq: bound on D2}
\end{align}
by equation (\ref{eq: some bounds 1}). Note that $\max_{s\in\MS}Z_s^1=T^{\MS,1}$ and, using the notation in the proof of Lemma \ref{lem: conditional and unconditional quantiles 1}, $\max_{s\in\MS}Z_s^2=T^{\MS,0}$. Therefore,
\begin{align}
\Pr\(T^{\MS,1}\leq c_{\eta+\tilde{\psi}_n}^{\MS,0}|\{X_i,\eps_i\}\)
&\geq_{(1)} \Ep\left[g_n(T^{\MS,1})|\{X_i,\eps_i\}\right]\nonumber\\
&\geq_{(2)}\Ep\left[g_n(T^{\MS,0})|\{X_i,\eps_i\}\right]-D_n\nonumber\\
&\geq_{(3)}\Pr\(T^{\MS,0}\leq c_{\eta+\tilde{\psi}_n/2}^{\MS,0}|\{X_i,\eps_i\}\)-D_n\nonumber\\
&=_{(4)}\Pr\(T^{\MS,0}\leq c_{\eta+\tilde{\psi}_n/2}^{\MS,0}|\{X_i\}\)-D_n
\geq \eta+\tilde{\psi}_n/2-D_n\label{eq: second line}
\end{align}
where (1) and (3) are by construction of the function $g_n$, (2) is by the definition of $D_n$, and (4) is because $T^{\MS,0}$ and $c_{\eta+\tilde{\psi}_n/2}^{\MS,0}$ are jointly independent of $\{\eps_i\}$. Finally, note that the right hand side of line (\ref{eq: second line}) is bounded from below by $\eta$ w.p.a.1. This implies that $\Pr(c_{\eta+\tilde{\psi}_n}^{\MS,0}<c_{\eta}^{\MS,1})=o(1)$, which is the first asserted claim. The second claim of the lemma follows similarly.
\end{proof}

\begin{lemma}\label{lem: conditional and unconditional quantiles 3}
Under Assumptions A\ref{as: disturbances}, A\ref{as: function estimation}, A\ref{as: V}, and A\ref{as: growth condition}, there exists a sequence $\{\psi_n\}$ of positive numbers converging to zero such that uniformly over all $\MS(\cdot,\cdot)$ and $\eta\in(0,1)$, $\Pr(c_{\eta+\psi_n}^{\MS,0}<c_{\eta}^{\MS})=o(1)$ and $\Pr(c_{\eta+\psi_n}^{\MS}<c_{\eta}^{\MS,0})=o(1)$ where $\MS=\MS(\MS_n,\{X_i\})$.\footnote{Note that Lemmas \ref{lem: conditional and unconditional quantiles 1} and \ref{lem: conditional and unconditional quantiles 3} provide the same results under two different methods for estimating $\sigma_i$.}
\end{lemma}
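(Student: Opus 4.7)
The plan is to adapt the argument of Lemma \ref{lem: conditional and unconditional quantiles 1} by routing through the intermediate statistic $T^{\MS,1}=\max_{s\in\MS}\sum_{i}a_i(s)\eps_i\epsilon_i$ introduced in Lemma \ref{lem: conditional and unconditional quantiles 2}, whose conditional quantiles have already been shown to be close to those of the pure-Gaussian reference $T^{\MS,0}=\max_{s\in\MS}e(s)$. Under Assumption A\ref{as: function estimation} I write $\hat{\sigma}_i=\eps_i+\delta_i$ with $\delta_i=f(X_i)-\hat{f}(X_i)=o_p(n^{-\kappa_1})$ uniformly in $i$, and $\hat{a}_i(s)=r(s)a_i(s)$ with $r(s)=(V(s)/\hat{V}(s))^{1/2}$. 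This yields the decomposition $|T^{\MS}-T^{\MS,1}|\leq p_1+p_2$ with
\[
p_1=\max_{s}|r(s)-1|\cdot\max_{s}\Bigl|\sum_i a_i(s)\eps_i\epsilon_i\Bigr|,\qquad p_2=\max_{s}r(s)\cdot\max_{s}\Bigl|\sum_i a_i(s)\delta_i\epsilon_i\Bigr|,
\]
where $p_1$ captures the mis-normalization of $\hat{V}$ and $p_2$ captures the regression-estimation error in $\hat{f}$.

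From A\ref{as: V} I get $\max_s|r(s)-1|=o_p(n^{-\kappa_3})$ and $\max_s r(s)=1+o_p(1)$. For the $\delta$-factor in $p_2$, condition on the data $\{X_i,Y_i\}$: the sum $\sum_i a_i(s)\delta_i\epsilon_i$ is a centered Gaussian in $\epsilon$ with variance bounded by $(\max_i\delta_i^2)\sum_i a_i(s)^2\leq o_p(n^{-2\kappa_1})/c_1^2$, using $\sum_i a_i(s)^2\sigma_i^2=1$ and A\ref{as: disturbances}, so Lemma \ref{lem: maximal inequality} yields an $O_p(\sqrt{\log p}\cdot n^{-\kappa_1})$ bound. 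The main technical obstacle is the $\eps$-factor in $p_1$: conditional on $\{X_i,\eps_i\}$, $\sum_i a_i(s)\eps_i\epsilon_i$ is Gaussian with conditional variance $\sum_i a_i(s)^2\eps_i^2$, and I need $\max_s\sum_i a_i(s)^2\eps_i^2=O_p(1)$. Since $\sum_i a_i(s)^2\sigma_i^2=1$, this reduces to controlling $\max_s|\sum_i a_i(s)^2(\eps_i^2-\sigma_i^2)|$, which is precisely the diagonal of the quantity $\Delta_{\Sigma}$ analyzed inside the proof of Lemma \ref{lem: conditional and unconditional quantiles 2}; that analysis, via Lemma \ref{lem: maximal inequality CCK} and A\ref{as: growth condition}(i), already shows $(\log p)^2\Delta_{\Sigma}=o_p(1)$, hence this quantity is $o_p(1)$. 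Consequently $p_1=o_p(\sqrt{\log p}\cdot n^{-\kappa_3})$, and altogether $|T^{\MS}-T^{\MS,1}|=o_p(\sqrt{\log p}\cdot n^{-\kappa_1\wedge\kappa_3})=o_p(n^{-c})$ for some $c>0$ by A\ref{as: growth condition}(ii), uniformly over $\MS(\cdot,\cdot)$.

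The concluding step mirrors the last paragraph of the proof of Lemma \ref{lem: conditional and unconditional quantiles 1}. Pick $\tilde{\psi}_n\to0$ with $\Pr(|T^{\MS}-T^{\MS,1}|>n^{-c})=o(\tilde{\psi}_n)$; Markov's inequality then promotes this to a conditional-on-$\{X_i,Y_i\}$ bound on an event of probability $1-o(1)$. On this event, comparing the conditional distributions of $T^{\MS}$ and $T^{\MS,1}$ up to an $n^{-c}$ shift in argument translates into an $O(\log p\cdot n^{-c})$ shift in quantile level via the anti-concentration inequality of Lemma \ref{lem: anticoncentration} (applied to the reference Gaussian $\{e(s)\}$, with anti-concentration for $c_\eta^{\MS,1}$ inherited from that for $c_\eta^{\MS,0}$ through Lemma \ref{lem: conditional and unconditional quantiles 2}). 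Chaining this with the direct quantile comparison between $c_\eta^{\MS,1}$ and $c_\eta^{\MS,0}$ furnished by Lemma \ref{lem: conditional and unconditional quantiles 2} yields the required $\psi_n$, taken as the sum of the two intermediate slacks, which converges to zero by A\ref{as: growth condition}(ii). The reverse inequality follows by the same argument after replacing $\eps_i$ with $-\eps_i$ throughout.
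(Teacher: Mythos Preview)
Your proposal is correct and follows essentially the same route as the paper's own proof: route through the intermediate statistic $T^{\MS,1}$ from Lemma~\ref{lem: conditional and unconditional quantiles 2}, bound $|T^{\MS}-T^{\MS,1}|\leq p_1+p_2$ with exactly the same two pieces (mis-normalization via A\ref{as: V}, regression-residual error via A\ref{as: function estimation}), and conclude by the quantile-shift argument of Lemma~\ref{lem: conditional and unconditional quantiles 1}. Your treatment is in fact slightly more explicit than the paper's in two places: you spell out that $\max_s\bigl|\sum_i a_i(s)\eps_i\epsilon_i\bigr|=O_p(\sqrt{\log p})$ via the diagonal of the $\Delta_\Sigma$ bound, and you correctly flag that the Lemma~\ref{lem: conditional and unconditional quantiles 1}-style argument here needs anti-concentration for $c_\eta^{\MS,1}$ rather than $c_\eta^{\MS,0}$ (which is available either directly, since $T^{\MS,1}$ is Gaussian conditional on $\{X_i,\eps_i\}$ with variances $1+o_p(1)$, or by inheritance from Lemma~\ref{lem: conditional and unconditional quantiles 2} as you say).

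One small slip: your final sentence (``replacing $\eps_i$ with $-\eps_i$'') is not the right mechanism for the reverse inequality $\Pr(c_{\eta+\psi_n}^{\MS}<c_\eta^{\MS,0})=o(1)$. Sign-flipping $\eps_i$ leaves all the relevant conditional distributions unchanged (by symmetry of $\epsilon_i$) and does not swap the two claims. The reverse direction follows instead by the symmetric version of the quantile-shift chain, starting from $\Pr(T^{\MS}\leq c_\eta^{\MS,0}\mid\cdot)\leq \Pr(T^{\MS,1}\leq c_\eta^{\MS,0}+\text{shift}\mid\cdot)+o(1)$ and then absorbing the shift into the quantile level via anti-concentration, exactly as in the ``second claim follows similarly'' step of Lemma~\ref{lem: conditional and unconditional quantiles 1}. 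This is a minor wording issue, not a substantive gap.
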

\begin{proof}
Lemma \ref{lem: conditional and unconditional quantiles 2} established that
$$
\Pr(c_{\eta+\tilde{\psi}_n}^{\MS,0}<c_{\eta}^{\MS,1})=o(1)\text { and } \Pr(c_{\eta+\tilde{\psi}_n}^{\MS,1}<c_{\eta}^{\MS,0})=o(1).
$$
Therefore, it suffices to show that
$$
\Pr(c_{\eta+\hat{\psi}_n}^{\MS}<c_{\eta}^{\MS,1})\text { and } \Pr(c_{\eta+\hat{\psi}_n}^{\MS,1}<c_{\eta}^{\MS})
$$
for some sequence $\{\hat{\psi}_n\}$ of positive numbers converging to zero. Denote
$$
p_1=\max_{s\in\MS}\left|\sum_{1\leq i\leq n}a_i(s)\eps_i\epsilon_i\right|\max_{s\in\MS}\left|1-(V(s)/\hat{V}(s))^{1/2}\right|,
$$
$$
p_2=\max_{s\in\MS}\left|\sum_{1\leq i\leq n}a_i(s)(\hat{\sigma}_i-\eps_i)\epsilon_i\right|\max_{s\in\MS}(V(s)/\hat{V}(s))^{1/2}.
$$
Note that $|T^{\MS}-T^{\MS,1}|\leq p_1+p_2$ and that by Lemmas \ref{lem: maximal inequality} and \ref{lem: conditional and unconditional quantiles 2}, $\max_{s\in\MS}|\sum_{1\leq i\leq n}a_i(s)\eps_i\epsilon_i|=O_p((\log p)^{1/2})$. Therefore, the result follows by the argument similar to that used in the proof of Lemma \ref{lem: conditional and unconditional quantiles 1} since $\hat{\sigma}_i-\eps_i=o_p(n^{-\kappa_1})$ by Assumption A\ref{as: function estimation}.
\end{proof}


\begin{lemma}\label{lem: inclusion R and SD}
Let Assumptions A\ref{as: disturbances}, A\ref{as: V}, and A\ref{as: growth condition} hold. In addition, let either Assumption A\ref{as: function estimation} or A\ref{as: sigma} hold. Then
$\Pr(\MS_n^R\subset\MS_n^{SD})\geq 1-\gamma_n+o(1)$ and $\Pr(\MS_n^R\subset\MS_n^{OS})\geq 1-\gamma_n+o(1)$.
\end{lemma}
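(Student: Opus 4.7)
The plan is to exhibit a single ``good event'' $\mathcal{G}_n$ of probability at least $1-\gamma_n+o(1)$ on which both inclusions hold. I would define $\mathcal{G}_n$ as the intersection of three pieces: (a) $\max_{s\in\MS_n}(-\eps(s))\leq c_{1-\gamma_n-\psi_n}^{\MS_n,0}$, which by Lemma \ref{lem: gaussian approximation} applied to $-\eps(s)$ has probability at least $1-\gamma_n-\psi_n+o(1)$; (b) $c_{1-\gamma_n-\psi_n}^{\MS_n,0}\leq c_{1-\gamma_n}^{PI}$, which has probability $1-o(1)$ by Lemma \ref{lem: conditional and unconditional quantiles 1} (under A\ref{as: sigma}) or Lemma \ref{lem: conditional and unconditional quantiles 3} (under A\ref{as: function estimation}) taken with $\eta=1-\gamma_n-\psi_n$; and (c) $\max_{s\in\MS_n}|(V(s)/\hat V(s))^{1/2}-1|=o_p(n^{-\kappa_3})$ from Assumption A\ref{as: V}. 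Since $\psi_n=o(1)$, one has $\Pr(\mathcal{G}_n)\geq 1-\gamma_n+o(1)$.

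For the one-step inclusion $\MS_n^R\subset\MS_n^{OS}$, on $\mathcal{G}_n$ and for $s\in\MS_n^R$, I would combine $f(s)>-c_{1-\gamma_n-\psi_n}^{\MS_n,0}$ with $\eps(s)\geq -c_{1-\gamma_n-\psi_n}^{\MS_n,0}$ (piece (a)) to obtain $f(s)+\eps(s)>-2c_{1-\gamma_n-\psi_n}^{\MS_n,0}$. Writing $b(s)/\hat V(s)^{1/2}=(f(s)+\eps(s))(V(s)/\hat V(s))^{1/2}$ and using pieces (b) and (c), together with $c_{1-\gamma_n}^{PI}=O_p(\sqrt{\log p})$ and the fact that $\sqrt{\log p}\cdot n^{-\kappa_3}=o_p(1)$ by Assumption A\ref{as: growth condition}, this yields $b(s)/\hat V(s)^{1/2}>-2c_{1-\gamma_n}^{PI}$ after absorbing a negligible multiplicative perturbation into the probability bound. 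Hence $\MS_n^R\subset\MS_n^{OS}$ on $\mathcal{G}_n$.

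For the step-down inclusion $\MS_n^R\subset\MS_n^{SD}$, I would argue by induction on $l$ that $\MS_n^R\subset\MS_n^l$ for every $l\geq 1$, with the base case being the one-step result. Suppose $\MS_n^R\subset\MS_n^l$ but some $s\in\MS_n^R$ is excluded at step $l+1$; then $b(s)/\hat V(s)^{1/2}\leq -c_{1-\gamma_n}^{PI}-c_{1-\gamma_n}^l$, and since $f(s)>-c_{1-\gamma_n-\psi_n}^{\MS_n,0}\geq -c_{1-\gamma_n}^{PI}$ on $\mathcal{G}_n$, rearranging gives $-\eps(s)\geq c_{1-\gamma_n}^l$ modulo an $o_p(1)$ correction. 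Because $s\in\MS_n^l$, this forces $\max_{s'\in\MS_n^l}(-\eps(s'))\geq c_{1-\gamma_n}^l-o_p(1)$, which by Lemma \ref{lem: gaussian approximation} applied with the data-dependent mapping $\MS(\cdot,\cdot)=\MS_n^l(\cdot,\cdot)$ together with Lemma \ref{lem: conditional and unconditional quantiles 1} or \ref{lem: conditional and unconditional quantiles 3} has probability at most $\gamma_n+o(1)$. Since the events ``$\MS_n^R$ is first shrunk at step $l^{\ast}$'' are disjoint across $l^{\ast}$ and each is contained in the single Gaussian-maximum tail event already accounted for by piece (a) of $\mathcal{G}_n$, the total failure probability is at most $\gamma_n+o(1)$, giving the desired bound.

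The central obstacle is the step-down argument, where one must avoid accumulating probability mass across the (potentially many) iterations $l=1,\dots,l(0)$. The resolution leverages the uniform-over-$\MS(\cdot,\cdot)$ character of Lemmas \ref{lem: gaussian approximation} and \ref{lem: conditional and unconditional quantiles 1}/\ref{lem: conditional and unconditional quantiles 3}: one Gaussian tail event controls the noise process $\{-\eps(s)\}_{s\in\MS_n}$ simultaneously for every data-dependent intermediate set $\MS_n^l$ that can arise, so the step-down procedure can only shrink past $\MS_n^R$ inside the single bad event of probability $\gamma_n+o(1)$, which is exactly what is needed. The secondary technical point — absorbing the multiplicative perturbation from $(V(s)/\hat V(s))^{1/2}$ — is handled through Assumption A\ref{as: growth condition}.
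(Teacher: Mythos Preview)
Your one-step argument is essentially correct, but the step-down argument contains a genuine gap. You propose to apply Lemma \ref{lem: gaussian approximation} (and Lemmas \ref{lem: conditional and unconditional quantiles 1}/\ref{lem: conditional and unconditional quantiles 3}) with ``the data-dependent mapping $\MS(\cdot,\cdot)=\MS_n^l(\cdot,\cdot)$'', and you later assert that the uniformity in those lemmas ``controls the noise process simultaneously for every data-dependent intermediate set $\MS_n^l$ that can arise.'' This misreads the uniformity: those lemmas are stated uniformly over mappings $\MS=\MS(\MS_n,\{X_i\})$, i.e., subsets determined by the design $\{X_i\}$ alone. The step-down sets $\MS_n^l$ depend on $\{Y_i\}$ (through $b(s)/\hat V(s)^{1/2}$ and the previously computed quantiles), so Lemma \ref{lem: gaussian approximation} cannot be applied to $\MS_n^l$. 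Your fallback claim that each failure event is contained in the complement of piece (a) of $\mathcal G_n$ also fails, because that would require $c_{1-\gamma_n}^l\geq c_{1-\gamma_n-\psi_n}^{\MS_n,0}$, which is generally false since $\MS_n^l\subset\MS_n$.

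The idea you are missing is the one the paper uses to collapse the induction into a single bound over an $\{X_i\}$-measurable set. If $\MS_n^R\setminus\MS_n^{SD}\neq\emptyset$, take the smallest $l$ with $\MS_n^R\not\subset\MS_n^l$; then $\MS_n^R\subset\MS_n^{l-1}$, and since maxima over larger sets have larger quantiles, $c_{1-\gamma_n}^{l-1}\geq c_{1-\gamma_n}^{R}$, where $c_{1-\gamma_n}^{R}$ is the quantile over $\MS_n^R$. Hence the offending $s\in\MS_n^R$ satisfies $\hat f(s)+\hat\eps(s)\leq -c_{1-\gamma_n}^{PI}-c_{1-\gamma_n}^{l-1}\leq -c_{1-\gamma_n}^{PI}-c_{1-\gamma_n}^{R}$. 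Because $\MS_n^R$ depends only on $\{X_i\}$ (through $f(s)$ and $c_{1-\gamma_n-\psi_n}^{\MS_n,0}$), Lemma \ref{lem: gaussian approximation} now applies legitimately to $\MS_n^R$, and the probability of this event is bounded by $\gamma_n+o(1)$ via the chain you already outlined for the one-step case. No induction, no accumulation across $l$, and the second claim follows immediately from $\MS_n^{SD}\subset\MS_n^{OS}$.
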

\begin{proof}
By the definition of $\psi_n$, $\log p/n^{\kappa_3}=o_p(\psi_n)$, and so there exists a sequence $\{\omega_n\}$ of positive numbers converging to zero such that $\log p/n^{\kappa_3}=o_p(\omega_n\psi_n)$. Let $\mathcal{A}_n$ denote the event that $\log p/n^{\kappa_3}>\omega_n\psi_n$. Then $\Pr(\mathcal{A}_n)=o(1)$.

Further, suppose that $\MS_n^R\backslash\MS_n^{SD}\neq\emptyset$. Then there exists the smallest integer $l$ such that $\MS_n^R\backslash\MS_n^l\neq\emptyset$ and $\MS_n^R\subset\MS_n^{l-1}$ (if $l=1$, let $\MS_n^0=\MS_n$). Therefore, $c_{1-\gamma_n}^R\leq c_{1-\gamma_n}^{l-1}$. It follows that there exists an element $s$ of $\MS_n^R$ such that 
$$
\hat{f}(s)+\hat{\eps}(s)\leq -c_{1-\gamma_n}^{PI}-c_{1-\gamma_n}^{l-1}\leq -c_{1-\gamma_n}^{PI}-c_{1-\gamma_n}^{R},
$$
and so for some $C>0$,
\begin{align*}
\Pr(\MS_n^R\backslash\MS_n^{SD}\neq\emptyset)
&\leq
\Pr(\min_{s\in\MS_n^R}(\hat{f}(s)+\hat{\eps}(s))\leq -c_{1-\gamma_n}^{PI}-c_{1-\gamma_n}^R)\\
&\leq_{(1)}
\Pr((\min_{s\in\MS_n^R}(f(s)+\eps(s))\mathcal{V}\leq -c_{1-\gamma_n}^{PI}-c_{1-\gamma_n}^R)\\
&\leq_{(2)}
\Pr((\min_{s\in\MS_n^R}(f(s)+\eps(s))\mathcal{V}\leq -c_{1-\gamma_n-\psi_n}^{PI,0}-c_{1-\gamma_n-\psi_n}^{R,0})+o(1)\\
&\leq_{(3)}
\Pr((\min_{s\in\MS_n^R}(\eps(s)-c_{1-\gamma_n-\psi_n}^{PI,0})\mathcal{V}\leq -c_{1-\gamma_n-\psi_n}^{PI,0}-c_{1-\gamma_n-\psi_n}^{R,0})+o(1)\\
&=_{(4)}
\Pr((\max_{s\in\MS_n^R}(-\eps(s))\geq c_{1-\gamma_n-\psi_n}^{PI,0}(1/\mathcal{V}-1)+c_{1-\gamma_n-\psi_n}^{R,0}/\mathcal{V})+o(1)\\
&\leq_{(5)}
\Pr((\max_{s\in\MS_n^R}(-\eps(s))\geq c_{1-\gamma_n-\psi_n}^{R,0}/\mathcal{V}-C(\log p)^{1/2}n^{-\kappa_3}/(\gamma_n+\psi_n))+o(1)\\
&\leq_{(6)}
\Pr((\max_{s\in\MS_n^R}(-\eps(s))\geq c^{R,0}_{1-\gamma_n-\psi_n-C(\log p)n^{-\kappa_3}/(\gamma_n+\psi_n)})+o(1)\\
&\leq_{(7)}
\gamma_n+\psi_n+C\omega_n+\Pr(\mathcal{A}_n)+o(1)=_{(8)}\gamma_n+o(1)
\end{align*}
where (1) follows from the definitions of $\hat{f}(s)$ and $\hat{\eps}(s)$, (2) is by the definition of $\psi_n$, (3) is by the definition of $\MS_n^R$, (4) is rearrangement, (5) is by Lemma \ref{lem: maximal inequality} and Assumption A\ref{as: V}, (6) is by Lemmas \ref{lem: anticoncentration} and \ref{lem: multiplied quantiles}, (7) is by Lemma \ref{lem: gaussian approximation} and definitions of $\omega_n$ and $\mathcal{A}_n$, and (8) follows from the definition of $\psi_n$ again. The first asserted claim follows. The second claim follows from the fact that $\MS_n^{SD}\subset\MS_n^{OS}$.
\end{proof}

\begin{lemma}\label{lem: truncation}
Let Assumptions A\ref{as: disturbances}, A\ref{as: V}, and A\ref{as: growth condition} hold. In addition, let either Assumption A\ref{as: function estimation} or A\ref{as: sigma} hold. Then
$\Pr(\max_{s\in\MS_n\backslash\MS_n^R}(\hat{f}(s)+\hat{\eps}(s))\leq0)\geq1-\gamma_{n}+o(1)$.\end{lemma}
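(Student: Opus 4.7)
The plan is to reduce the event in question, via a purely deterministic manipulation, to a tail event for $\max_{s\in\MS_n}\eps(s)$, and then control that tail using Lemma~\ref{lem: gaussian approximation}.

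First I would exploit the fact that $\hat a_i(s)=a_i(s)(V(s)/\hat V(s))^{1/2}$, where the scaling factor is the same across all $i$. Consequently
\[
\hat f(s)+\hat\eps(s)=\bigl(f(s)+\eps(s)\bigr)\bigl(V(s)/\hat V(s)\bigr)^{1/2},
\]
and since $(V(s)/\hat V(s))^{1/2}>0$, the inequality $\hat f(s)+\hat\eps(s)>0$ is equivalent to $f(s)+\eps(s)>0$. This removes any dependence on $\hat\sigma_i$ at this stage, so Assumption~A\ref{as: V} and the choice between A\ref{as: function estimation} and A\ref{as: sigma} are only needed insofar as they guarantee $\hat V(s)>0$ (which is implicit).

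Next, by the very definition of $\MS_n^R$, for every $s\in\MS_n\setminus\MS_n^R$ one has $f(s)\leq -c_{1-\gamma_n-\psi_n}^{\MS_n,0}$. Combining this with the previous step, the event $\{\max_{s\in\MS_n\setminus\MS_n^R}(\hat f(s)+\hat\eps(s))>0\}$ is contained in
\[
\Bigl\{\max_{s\in\MS_n\setminus\MS_n^R}\eps(s)> c_{1-\gamma_n-\psi_n}^{\MS_n,0}\Bigr\}\;\subset\;\Bigl\{\max_{s\in\MS_n}\eps(s)> c_{1-\gamma_n-\psi_n}^{\MS_n,0}\Bigr\}.
\]

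Finally I would apply Lemma~\ref{lem: gaussian approximation} with $\MS=\MS_n$ and $\eta=1-\gamma_n-\psi_n$, which gives
\[
\Pr\Bigl(\max_{s\in\MS_n}\eps(s)\leq c_{1-\gamma_n-\psi_n}^{\MS_n,0}\Bigr)=1-\gamma_n-\psi_n+o(1).
\]
Since $\psi_n\to 0$ by construction (the choice of $\psi_n$ supplied by Lemma~\ref{lem: conditional and unconditional quantiles 1} or \ref{lem: conditional and unconditional quantiles 3}, depending on which of A\ref{as: function estimation}/A\ref{as: sigma} holds), we conclude
\[
\Pr\Bigl(\max_{s\in\MS_n\setminus\MS_n^R}(\hat f(s)+\hat\eps(s))\leq 0\Bigr)\geq 1-\gamma_n-\psi_n+o(1)=1-\gamma_n+o(1),
\]
as claimed. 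There is no genuine obstacle: the deterministic reduction in the first paragraph eliminates all $\hat{\cdot}$ quantities, and the probabilistic content is entirely absorbed by one invocation of the Gaussian approximation lemma, with the small parameter $\psi_n$ built in precisely to absorb the coupling error.
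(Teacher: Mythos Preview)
Your proof is correct and follows essentially the same route as the paper's: the paper also first passes from $\hat f(s)+\hat\eps(s)\leq 0$ to $f(s)+\eps(s)\leq 0$ (the paper writes this as an equality of probabilities without elaboration, whereas you spell out the scaling identity $\hat f(s)+\hat\eps(s)=(V(s)/\hat V(s))^{1/2}(f(s)+\eps(s))$), then uses the definition of $\MS_n^R$ to bound $f(s)$ on the complement, enlarges the maximum to all of $\MS_n$, and applies Lemma~\ref{lem: gaussian approximation} with $\eta=1-\gamma_n-\psi_n$. One minor inaccuracy in your commentary: Assumptions A\ref{as: V} and A\ref{as: function estimation}/A\ref{as: sigma} enter not merely to ensure $\hat V(s)>0$, but because the very definition of $\MS_n^R$ presupposes the existence of $\psi_n$ with the properties furnished by Lemma~\ref{lem: conditional and unconditional quantiles 1} or \ref{lem: conditional and unconditional quantiles 3}; you do note this later, so the proof is sound.
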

\begin{proof}
The result follows from
\begin{align*}
\Pr(\max_{s\in\MS_n\backslash\MS_n^R}(\hat{f}(s)+\hat{\eps}(s))\leq 0)&=
\Pr(\max_{s\in\MS_n\backslash\MS_n^R}(f(s)+\eps(s))\leq 0)\\
&\geq_{(1)}
\Pr(\max_{s\in\MS_n\backslash\MS_n^R}\eps(s)\leq c_{1-\gamma_n-\psi_n}^{PI,0})\\
&\geq
\Pr(\max_{s\in\MS_n}\eps(s)\leq c_{1-\gamma_n-\psi_n}^{PI,0})\\
&=_{(2)}
1-\gamma_n-\psi_n+o(1)=_{(3)}1-\gamma_n+o(1)
\end{align*}
where (1) follows from the definition of $\MS_n^R$, (2) is by Lemma \ref{lem: gaussian approximation}, and (3) is by the definition of $\psi_n$.
\end{proof}

\subsection{Proofs of Theorems}\label{sec: proof of theorems}
\begin{proof}[Proof of Theorem \ref{thm: size}]
All inequalities presented in this proof hold uniformly over all models $M\in\mathcal{M}$ because all of them only rely on assumptions and lemmas in Subsection \ref{sec: auxiliary lemmas}, which hold uniformly over all models $M\in\mathcal{M}$. For brevity of notation, I drop indexing by $M$.

By the definition of $\psi_n$, $\log p/n^{\kappa_3}=o_p(\psi_n)$, and so there exists a sequence $\{\omega_n\}$ of positive numbers converging to zero such that $\log p/n^{\kappa_3}=o_p(\omega_n\psi_n)$. Let $\mathcal{A}_n$ denote the event that $\log p/n^{\kappa_3}>\omega_n\psi_n$. Then $\Pr(\mathcal{A}_n)=o(1)$.

Further, note that for some $C>0$,
\begin{eqnarray*}
&&\Pr(T\leq c_{1-\alpha}^P)=\Pr(\max_{s\in\MS_n}(\hat{f}(s)+\hat{\eps}(s))\leq c_{1-\alpha}^P)
\geq_{(1)}
\Pr(\max_{s\in\MS_n^R}(\hat{f}(s)+\hat{\eps}(s))\leq c_{1-\alpha}^P)-\gamma_n+o(1)\\
&&\geq_{(2)}
\Pr(\max_{s\in\MS_n^R}(\hat{f}(s)+\hat{\eps}(s))\leq c_{1-\alpha}^R)-2\gamma_n+o(1)
\geq_{(3)}
\Pr(\max_{s\in\MS_n^R}\hat{\eps}(s)\leq c_{1-\alpha}^R)-2\gamma_n+o(1)\\
&&\geq_{(4)}
\Pr(\max_{s\in\MS_n^R}\eps(s)\mathcal{V}\leq c_{1-\alpha-\psi_n}^{R,0})-2\gamma_n+o(1)
=_{(5)}
\Pr(\max_{s\in\MS_n^R}\eps(s)\leq c_{1-\alpha-\psi_n}^{R,0}/\mathcal{V})-2\gamma_n+o(1)\\
&&\geq_{(6)}
\Pr(\max_{s\in\MS_n^R}\eps(s)\leq c_{1-\alpha-\psi_n}^{R,0}(1-n^{-\kappa_3}))-2\gamma_n+o(1)\\
&&\geq_{(7)}
\Pr(\max_{s\in\MS_n^R}\eps(s)\leq c^{R,0}_{1-\alpha-\psi_n-C(\log p)n^{-\kappa_3}/(\alpha+\psi_n)})-2\gamma_n+o(1)\\
&&\geq_{(8)}
1-\alpha-\psi_n-C\omega_n-\Pr(\mathcal{A}_n)-2\gamma_n+o(1)=_{(9)}1-\alpha+o(1)
\end{eqnarray*}
where (1) follows from Lemma \ref{lem: truncation}, (2) is by Lemma \ref{lem: inclusion R and SD}, (3) is because under $\MH_0$, $\hat{f}(s)\leq 0$, (4) follows from the definitions of $\hat{\eps}(s)$ and $\psi_n$, (5) is rearrangement, (6) is by Assumption A\ref{as: V}, (7) is by Lemma \ref{lem: multiplied quantiles}, (8) is by Lemma \ref{lem: gaussian approximation} and definitions of $\omega_n$ and $\mathcal{A}_n$, and (9) is by the definitions of $\psi_n$ and $\gamma_n$. The first asserted claim follows.

In addition, when $f(\cdot)$ is identically constant,
\begin{eqnarray*}
&&\Pr(T\leq c_{1-\alpha}^P)=_{(1)}\Pr(\max_{s\in\MS_n}\hat{\eps}(s)\leq c_{1-\alpha}^P)
\leq_{(2)}
\Pr(\max_{s\in\MS_n}\hat{\eps}(s)\leq c_{1-\alpha}^{PI})\\
&&\leq_{(3)}
\Pr(\max_{s\in\MS_n}\hat{\eps}(s)\leq c_{1-\alpha+\psi_n}^{PI,0})+o(1)
\leq_{(4)}
\Pr(\max_{s\in\MS_n}\eps(s)\leq c_{1-\alpha+\psi_n}^{PI,0}(1+n^{-\kappa_3}))+o(1)\\
&&\leq_{(5)}
\Pr(\max_{s\in\MS_n}\eps(s)\leq c^{PI,0}_{1-\alpha+\psi_n+C(\log p)n^{-\kappa_3}/(\alpha-\psi_n)})+o(1)\leq_{(6)} 1-\alpha+o(1)
\end{eqnarray*}
where (1) follows from the fact that $\hat{f}(s)=0$ whenever $f(\cdot)$ is identically constant, (2) follows from $\MS_n^P\subset\MS_n$, (3) is by the definition of $\psi_n$, (4) is by Assumption A\ref{as: V}, (5) is by Lemma \ref{lem: multiplied quantiles}, and (6) is from Lemma \ref{lem: gaussian approximation} and the definition of $\psi_n$. The second asserted claim follows.
\end{proof}

\begin{proof}[Proof of Theorem \ref{thm: consistency}]
Let $x_1,x_2\in[s_l,s_r]$ be such that $x_1<x_2$ but $f(x_1)>f(x_2)$. By the mean
value theorem, there exists $x_{0}\in(x_{1},x_{2})$ satisfying
$$
f^{\prime}(x_{0})(x_{2}-x_{1})=f(x_{2})-f(x_{1})<0.
$$
Therefore, $f^{\prime}(x_{0})<0$. Since $f^{\prime}(\cdot)$ is continuous,
$f^{\prime}(x)<f^{\prime}(x_{0})/2$ for any $x\in[x_{0}-\Delta_x,x_{0}+\Delta_x]$
for some $\Delta_x>0$. Apply Assumption A\ref{as: weighting functions simple} to the interval $[x_{0}-\Delta_x,x_{0}+\Delta_x]$ to obtain $s=s_n\in\MS_n$ and intervals $[x_{l1},x_{r1}]$ and $[x_{l2},x_{r2}]$ appearing in Assumption A\ref{as: weighting functions simple}-iii. By Assumption A\ref{as: weighting functions simple}-ii, 
\begin{equation}\label{eq: variance n3 bound}
V(s)\leq Cn^3
\end{equation}
for some $C>0$. In addition, by Assumption A\ref{as: design simple} and Chebyshev's inequality, there is some $C>0$ such that $|\{i=\overline{1,n}:X_i\in[x_{l1},x_{r1}]\}|\geq Cn$ and $|\{i=\overline{1,n}:X_i\in[x_{l2},x_{r2}]\}|\geq Cn$ w.p.a.1. Hence, by Assumptions A\ref{as: weighting functions simple}-(i) and A\ref{as: weighting functions simple}-(iii),
\begin{equation}\label{eq: something}
\sum_{1\leq i,j\leq n}(f(X_i)-f(X_j))\sign(X_j-X_i)Q(X_i,X_j,s)\geq Cn^2
\end{equation}
w.p.a.1 for some $C>0$. Combining (\ref{eq: variance n3 bound}) and (\ref{eq: something}) yields $f(s)\geq Cn^{1/2}$ w.p.a.1 for some $C>0$. Let $\mathcal{A}_n$ denote the event that $f(s)< Cn^{1/2}$, so that $\Pr(\mathcal{A}_n)=o(1)$.
Further, since $\sum_{1\leq i\leq n}a_i(s)^2\sigma_i^2=1$, Assumption A\ref{as: disturbances} implies $A_n\geq C/n^{1/2}$ for some $C>0$, and so Assumption A\ref{as: growth condition} gives $\log p=o_p(n)$, and so 
\begin{equation}\label{eq: log p bound via fs}
(\log p)^{1/2}=o_p(f(s)).
\end{equation}
Therefore, there exists a sequence $\{\omega_n\}$ of positive numbers converging to zero such that $(\log p)^{1/2}=o_p(\omega_nf(s))$, and so letting $\mathcal{B}_n$ denote the event that $(\log p)^{1/2}>\omega_nf(s)$ leads to $\Pr(\mathcal{B}_n)=o(1)$.
Hence, for some $C>0$,
\begin{align*}
\Pr(T\leq c_{1-\alpha}^P)&\leq_{(1)}\Pr(T\leq c_{1-\alpha}^{PI})\leq_{(2)} \Pr(T\leq c_{1-\alpha+\psi_n}^{PI,0})+o(1)\\
&\leq_{(3)}
\Pr(T\leq C(\log p)^{1/2})+o(1)
\leq_{(4)}
\Pr(\hat{f}(s)+\hat{\eps}(s)\leq C(\log p)^{1/2})+o(1)\\
&\leq_{(5)}
\Pr(f(s)+\eps(s)\leq C(\log p)^{1/2}(1+n^{-\kappa_3}))+o(1)\\
&\leq_{(6)}
\Pr(f(s)+\eps(s)\leq 2C(\log p)^{1/2})+o(1)\\
&\leq_{(7)}
\Pr(\eps(s)\leq 2C(\log p)^{1/2}-f(s))+o(1)\\
&\leq_{(8)}
\Pr(\eps(s)\leq (2C\omega_n-1)f(s))+\Pr(\mathcal{B}_n)+o(1)\\
&\leq_{(9)}
C/n+\Pr(\mathcal{A}_n)+\Pr(\mathcal{B}_n)+o(1)=_{(10)}o(1)
\end{align*}
where (1) follows from $\MS_n^P\subset\MS_n^{PI}$, (2) is by the definition of $\psi_n$, (3) is by Lemma \ref{lem: maximal inequality}, (4) is since $T=\max_{s\in\MS_n}(\hat{f}(s)+\hat{\eps}(s))$, (5) is by Assumption A\ref{as: V}, (6) is because $n^{-\kappa_3}\leq 1$ for large $n$, (7) is rearrangement, (8) follows from definitions of $\omega_n$ and $\mathcal{B}_n$, (9) follows by noting that $2C\omega_n\leq 1/2$ for large $n$ and applying Chebyshev's inequality conditional on $\{X_i\}$ with $\Ep[\varepsilon(s)^2|\{X_i\}]=1$, and (10) follows from $\Pr(\mathcal{A}_n)+\Pr(\mathcal{B}_n)=o(1)$. 
This completes the proof of the theorem.
\end{proof}

\begin{proof}[Proof of Theorem \ref{thm: consistency rate against one-dimensional alternatives}]
The proof follows from an argument similar to that used in the proof of Theorem \ref{thm: consistency} with equation (\ref{eq: something}) replaced by
$$
\sum_{1\leq i,j\leq n}(f(X_i)-f(X_j))\sign(X_j-X_i)Q(X_i,X_j,s)\geq Cl_nn^2,
$$
so that $f(s)\geq Cl_nn^{1/2}$ w.p.a.1 for some $C>0$,
and condition $\log p=o_p(n)$ replaced by $\log p=o_p(l_n^2n)$, so that $(\log p)^{1/2}=o_p(f(s))$.
\end{proof}

\begin{proof}[Proof of Theorem \ref{thm: uniform consistency rate}]
Since $\inf_{x\in[s_l,s_r]}f^{(1)}(x)<-l_n(\log p/n)^{\beta/(2\beta+3)}$, for sufficiently large $n$, there exists an interval $[x_{n,1},x_{n,2}]\subset[s_l,s_r]$ such that $|x_{n,2}-x_{n,1}|= h_n$ and for all $x\in[x_{n,1},x_{n,2}]$, $f^{(1)}(x)<-l_n(\log p/n)^{\beta/(2\beta+3)}/2$. Take $s=s_n\in\MS_n$ as in Assumption A\ref{as: weighting functions difficult} applied to the interval $[x_{n,1},x_{n,2}]$.
Assumptions A\ref{as: disturbances}, A\ref{as: design difficult}, and A\ref{as: weighting functions difficult}-(ii) yield $V(s)\leq C(n h_n)^3h_n^{2k}$ for some $C>0$ because by Lemma \ref{lem: technical lemma}, $|\{i=\overline{1,n}:X_i\in[x_{n,1},x_{n,2}]\}|\leq Cn h_n$ w.p.a.1. In addition, combining Assumptions A\ref{as: design difficult}, A\ref{as: weighting functions difficult}-(i), and A\ref{as: weighting functions difficult}-(iii),
$$
\sum_{1\leq i,j\leq n}(f(X_i)-f(X_j))\sign(X_j-X_i)Q(X_i,X_j,s)\geq l_n Ch_n^{1+\beta+k}(n h_n)^2
$$
w.p.a.1 for some $C>0$, and so $f(s)\geq Cl_n h_n^{1+\beta}(n h_n)^{1/2}$ w.p.a.1 by an argument similar to that used in the proof of Theorem \ref{thm: consistency}. From this point, since $\log p=o(l_n^2h_n^{2\beta+3}n)$, the argument like that used in the proof of Theorem \ref{thm: consistency} yields the result.
\end{proof}

\begin{proof}[Proof of Theorem \ref{thm: minimax lower bound}]
Let $s_l=0$ and $s_r=1$. Let $h=h_n=c(\log n/n)^{1/(2\beta+3)}$ for sufficiently small $c>0$. Let $L=[(s_r-s_l)/(4h)]$ where $[x]$ is the largest integer smaller or equal than $x$. For $l=\overline{1,L}$, let $x_l=4h(l-1)$ and define $f_l:[s_l,s_r]\rightarrow\RR$ by $f_l(s_l)=0$, $f_l^{(1)}(x)=0$ if $x\leq x_l$, $f_l^{(1)}(x)=-L(x-x_l)^{\beta}$ if $x\in(x_l,x_l+h]$, $f_l^{(1)}(x)=-L(x_l+2h-x)^{\beta}$ if $x\in(x_l+h,x_l+2h]$, $f_l^{(1)}(x)=L(x-x_l-2h)^{\beta}$ if $x\in(x_l+2h,x_l+3h]$, $f_l^{(1)}(x)=L(x_l+4h-x)^{\beta}$ if $x\in(x_l+3h,x_l+4h]$ and $f_l^{(1)}(x)=0$ otherwise. In addition, let $f_0(x)=0$ for all $x\in[s_l,s_r]$. Finally, let $\{\eps_i\}$ be a sequence of independent $N(0,1)$ random variables. 

For $l=\overline{0,L}$, consider a model $M_l=M_{n,l}$ with the regression function $f_l$, $X$ distributed uniformly over $[0,1]$, and $\varepsilon$ distributed as $N(0,1)$ random variable independently of $X$. Note that $M_0$ belongs to $\mathcal{M}_2$ and satisfies $\MH_0$. In addition, for $l\geq 1$, $M_l$ belongs to $\mathcal{M}_2$, does not satisfy $\MH_0$, and, moreover, has $\inf_{x\in[s_l,s_r]}f_l^{(1)}(x)<-C(\log n/n)^{\beta/(2\beta+3)}$ for some small $C>0$.

Consider any test $\psi=\psi(\{X_i,Y_i\})$ such that $\Ep_{M_0}[\psi]\leq \alpha+o(1)$. Then following the argument from \cite{Dumbgen2001} gives
\begin{align*}
\inf_{M\in\mathcal{M}_2}\Ep_M[\psi]-\alpha
&\leq
\min_{1\leq l\leq L}\Ep_{M_l}[\psi]-\Ep_{M_0}[\psi]+o(1)
\leq
\sum_{1\leq l\leq L}\Ep_{M_l}[\psi]/L-\Ep_{M_0}[\psi]+o(1)\\
&=
\sum_{1\leq l\leq L}\Ep_{M_0}[\psi\rho_l]/L-\Ep_{M_0}[\psi]+o(1)
=\sum_{1\leq l\leq L}\Ep_{M_0}[\psi(\rho_l-1)]/L+o(1)\\
&\leq
\Ep_{M_0}\left[\psi\left|\sum_{1\leq l\leq L}\rho_l/L-1\right|\right]+o(1)
\leq
\Ep_{M_0}\left[\left|\sum_{1\leq l\leq L}\rho_l/L-1\right|\right]+o(1)
\end{align*}
where $\rho_l$ is the likelihood ratio of observing $\{X_i,Y_i\}$ under the models $M_l$ and $M_0$. Further,
$$
\rho_l=\exp\(\sum_{1\leq i\leq n}Y_if_l(X_i)-\sum_{1\leq i\leq n}f_l(X_i)^2/2\)=\exp(\omega_{n,l}\xi_{n,l}-\omega_{n,l}^2/2)
$$
where $\omega_{n,l}=(\sum_{1\leq i\leq n}f_l(X_i)^2)^{1/2}$ and $\xi_{n,l}=\sum_{1\leq i\leq n}Y_if_l(X_i)/\omega_{n,l}$. Note that under the model $M_0$, conditional on $\{X_i\}$, $\{\xi_{n,l}\}_{1\leq l\leq L}$ is a sequence of independent $N(0,1)$ random variables, so that
$$
\Ep_{M_0}\left[\left|\sum_{1\leq l\leq L}\rho_l/L-1\right||\{X_i\}\right]\leq 2.
$$
In addition, by the construction of the functions $f_l$, Lemma \ref{lem: technical lemma} shows that $\omega_{n,l}\leq C(nh)^{1/2}h^{\beta+1}$ w.p.a.1 for some $C>0$, and so $\omega_{n,l}\leq (C\log n)^{1/2}$ w.p.a.1 for some $C>0$ where $C$ can be made arbitrarily small by selecting sufficiently small $c$ in the definition of $h_n$. Let $A_n$ denote the event $\omega_{n,l}\leq (C\log n)^{1/2}$, so that $\Pr(A_n)\rightarrow 1$. Moreover,
\begin{equation}\label{eq: expectation null model bound}
\Ep_{M_0}\left[\left|\sum_{1\leq l\leq L}\rho_l/L-1\right|\right]\leq \Ep_{M_0}\left[\left|\sum_{1\leq l\leq L}\rho_l/L-1\right||A_n\right]+2(1-\Pr(A_n)).
\end{equation}
Further, on $A_n$, 
\begin{align*}
\Ep_{M_0}\left[\left|\sum_{1\leq l\leq L}\rho_l/L-1\right||\{X_i\}\right]^2
&\leq
\Ep_{M_0}\left[\left(\sum_{1\leq l\leq L}\rho_l/L-1\right)^2|\{X_i\}\right]\\
&\leq
\sum_{1\leq l\leq L}\Ep_{M_0}[\rho_l^2/L^2|\{X_i\}]\\
&\leq
\sum_{1\leq l\leq L}\Ep_{M_0}[\exp(2\omega_{n,l}\xi_{n,l}-\omega_{n,l}^2)/L^2|\{X_i\}]\\
&\leq
\sum_{1\leq l\leq L}\exp(\omega_{n,l}^2)/L^2\leq \max_{1\leq l\leq L}\exp(\omega_{n,l}^2)/L\\
&\leq
\exp\left(C\log n-\log L\right)=o(1)
\end{align*}
because $C$ in the last line is arbitrarily small and $\log n\lesssim \log L$. Combining the last bound with (\ref{eq: expectation null model bound}) gives  $\inf_{M\in\mathcal{M}_2}\Ep_{M}[\psi]\leq \alpha+o(1)$, and so the result follows.
\end{proof}

\section{Proofs for Section \ref{sec: multivariate models}}
\begin{proof}[Proof of Theorem \ref{thm: nonparametric model}]
In the set-up of this theorem, Lemmas \ref{lem: maximal inequality}, \ref{lem: anticoncentration}, \ref{lem: multiplied quantiles}, \ref{lem: conditional and unconditional quantiles 1}, \ref{lem: conditional and unconditional quantiles 3}, \ref{lem: inclusion R and SD}, and \ref{lem: truncation} hold with $X_i$ replaced by $X_i,Z_i$ and Assumptions A\ref{as: disturbances}, A\ref{as: function estimation}, and A\ref{as: growth condition} replaced by Assumptions A1$'$, A2$'$, and A5$'$, respectively. Further, Lemma \ref{lem: gaussian approximation} now follows by applying Lemma \ref{lem: gaussian approximation original}, case (i), and assumptions A1 and A5 replaced by A1$'$ and A5$'$. The result of Lemma \ref{lem: conditional and unconditional quantiles 2} holds but the proof has to be modified. Recall that in the proof of Lemma \ref{lem: conditional and unconditional quantiles 2}, it was derived that
$$
\Ep\left[\Delta_{\Sigma}|\{X_i,Z_i\}\right]\leq C\left(A_n\sqrt{\log p}+A_n^2\log p\Ep\left[\max_{1\leq i\leq n}\varepsilon_i^4\right]^{1/2}\right).
$$
Now, $A_n(\log p)^{5/2}=o_p(1)$ by Assumption A5$'$. In addition, by a maximal inequality (Lemma 2.2.2 in \cite{VaartWellner1996}),
$$
\Ep\left[\max_{1\leq i\leq n}\varepsilon_i^4\right]\leq C(\log n)^4
$$
for some $C>0$ under Assumption A1$'$. Therefore,
$$
A_n^2(\log p)^3\Ep\left[\max_{1\leq i\leq n}\varepsilon_i^4\right]^{1/2}\leq CA_n^2(\log(p n))^5=o_p(1)
$$
for some $C>0$ under Assumption A5$'$. Hence, $(\log p)^2\Delta_{\Sigma}=o_p(1)$. The rest of the proof of Lemma \ref{lem: conditional and unconditional quantiles 2} is the same.

Now, the first claim of Theorem \ref{thm: nonparametric model} follows from an argument similar to that used in the proof of Theorem \ref{thm: size} by noting that under $\MH_0$, $(\log p)^{1/2}\max_{s\in\MS_n}\hat{f}(s)\leq o_p(1)$\footnote{Specifically, the only required change in the proof of the first claim of Theorem \ref{thm: size} is that starting from inequality (3) $\alpha$ should be replaced by $\alpha-o_p(1)$.}, which holds by Assumption A\ref{as: multivariate nonparametric model}. The second claim of the theorem again follows from an argument similar to that used in the proof of Theorem \ref{thm: size} by noting that when $f(\cdot)$ is identically constant, $(\log p)^{1/2}\max_{s\in\MS_n}|\hat{f}(s)|\leq o_p(1)$, which holds by Assumption A\ref{as: multivariate nonparametric model}.
\end{proof}

\begin{proof}[Proof of Theorem \ref{thm: partially linear model}]
Denote $Y_{i}^{0}=f(X_{i})+\eps_{i}$. Then $Y_{i}=Y_{i}^{0}+Z_{i}^{T}\beta$
and $\tilde{Y}_{i}=Y_{i}^{0}-Z_{i}^{T}(\hat{\beta}_n-\beta)$. So,
$|\tilde{Y}_{i}-Y_{i}^{0}|\leq\Vert Z_{i}\Vert\Vert\hat{\beta}_n-\beta\Vert$
uniformly over $i=1,...,n$ and all models in $\mathcal{M}_{PL}$. 
Further, note that since $\sum_{1\leq i,j\leq n}Q(X_i,X_j,s)/V(s)^{1/2}=o_p(\sqrt{n/\log p})$ uniformly over $s\in\MS_n$, there exists a sequence $\{\omega_n\}$ such that $\omega_n\rightarrow 0$ and $\sum_{1\leq i\leq n}|a_i(s)|=o_p(\omega_n\sqrt{n/\log p})$ uniformly over $s\in\MS_n$. Let $\mathcal{A}_n$ denote the event that $\sum_{1\leq i\leq n}|a_i(s)|>\omega_n\sqrt{n/\log p}$, so that $\Pr(\mathcal{A}_n)=o(1)$.
In addition, note that for any $C>0$,
\begin{align*}
\Pr(\max_{s\in\MS_n}\sum_{1\leq i\leq n}|\hat{a}_i(s)(\tilde{Y}_i-Y_i^0)|>C/\sqrt{\log p})&\leq \Pr(\max_{s\in\MS_n}\sum_{1\leq i\leq n}|a_i(s)(\tilde{Y}_i-Y_i^0)|>C/\sqrt{2\log p})+o(1)\\
&\leq \Pr(C_9\|\hat{\beta}-\beta\|\max_{s\in\MS_n}\sum_{1\leq i\leq n}|a_i(s)|>C/\sqrt{2\log p})+o(1)\\
&\leq \Pr(\|\hat{\beta}-\beta\|>C/(C_9\omega_n\sqrt{2n}))+\Pr(\mathcal{A}_n)+o(1)=o(1)
\end{align*}
where the last conclusion follows from Assumption A\ref{as: partially linear model} and from $\Pr(\mathcal{A}_n)=o(1)$.
Therefore, for any $C>0$,
$$
\Pr\left(|T-\max_{s\in\MS_n}\sum_{1\leq i\leq n}\hat{a}_i(s)Y_i^0|>C/\sqrt{\log p}\right)\leq \Pr\left(\max_{s\in\MS_n}\sum_{1\leq i\leq n}|\hat{a}_i(s)(\tilde{Y}_i-Y_i^0)|>C/\sqrt{\log p}\right)=o(1).
$$
From this point, in view of of Lemma \ref{lem: anticoncentration}, the result follows by the argument similar to that used in the proof of Theorem \ref{thm: size}.
\end{proof}

\section{Proof for Section \ref{sec: endogenous covariates}}
\begin{proof}[Proof of Theorem \ref{thm: model with endogenous covariates}]
The proof follows from the same argument as in the proof of Theorem \ref{thm: partially linear model} and Comment \ref{com: separately additive model}.
\end{proof}




\section{Proofs for Appendix \ref{sec: verification of high level conditions}}
\begin{proof}[Proof of Proposition \ref{lem: consistency of rice estimator}]
Since $\sigma_i\geq c_1$ by Assumption A\ref{as: disturbances}, it follows that
$$
|\hat{\sigma}_i-\sigma_i|=\left|\frac{\hat{\sigma}_i^2-\sigma_i^2}{\hat{\sigma}_i+\sigma_i}\right|\leq |\hat{\sigma}_i^2-\sigma_i^2|/c_1,
$$
so that it suffices to prove that $\hat{\sigma}_i^2-\sigma_i^2=O_p(b_n^2+\log n/(b_nn^{1/2}))$ uniformly over $i=\overline{1,n}$. Now, note that
$$
\hat{\sigma}_i^2-\sigma_i^2=\sum_{j\in J(i)^\prime}(Y_{j+1}-Y_j)^2/(2|J(i)|)-\sigma_i^2
$$
where $J(i)^\prime=\{j\in J(i):j+1\in J(i)\}$. Therefore,
$$
\hat{\sigma}_i^2-\sigma_i^2=p_{i,1}+p_{i,2}+p_{i,3}-p_{i,4}+O_p(b_n^2)
$$
where
\begin{align*}
&p_{i,1}=\sum_{j\in J(i)^\prime}(\eps_{j+1}^2-\sigma_j^2)/(2|J(i)|),\\
&p_{i,2}=\sum_{j\in J(i)^\prime}(\eps_{j}^2-\sigma_j^2)/(2|J(i)|),\\
&p_{i,3}=\sum_{j\in J(i)^\prime}(f(X_{j+1})-f(X_j))(\eps_{j+1}-\eps_j)/|J(i)|,\\
&p_{i,4}=\sum_{j\in J(i)^\prime}\eps_j\eps_{j+1}/|J(i)|.
\end{align*}
Since $b_n\geq (\log n)^2/(C_6n)$, Lemma \ref{lem: technical lemma} implies that $cb_nn\leq |J(i)|\leq Cb_nn$ for all $i=\overline{1,n}$ w.p.a.1 for some $c,C>0$. Note also that $J(i)$'s depend only on $\{X_i\}$. Therefore, applying Lemma \ref{lem: maximal inequality CCK} conditional on $\{X_i\}$ shows that $p_{i,1}$ and $p_{i,2}$ are both $O_p(\log n/(b_nn^{1/2}))$ uniformly over $i=\overline{1,n}$ since $\Ep[\max_{1\leq i\leq n}\eps_i^4|\{X_i\}]\leq C_1n$ by Assumption A\ref{as: disturbances}. Further, applying Lemma \ref{lem: maximal inequality CCK} conditional on $\{X_i\}$ separately to $\sum_{j\in J(i)^\prime:j\text{ odd}}\eps_j\eps_{j+1}/|J(i)|$, $\sum_{j\in J(i)^\prime:j\text{ even}}\eps_j\eps_{j+1}/|J(i)|$, $\sum_{j\in J(i)^\prime}(f(X_{j+1})-f(X_j))\eps_{j+1}/|J(i)|$, and $\sum_{j\in J(i)^\prime}(f(X_{j+1})-f(X_j))\eps_{j}/|J(i)|$ shows that $p_{i,3}$ and $p_{i,4}$ are also $O_p(\log n/(b_nn^{1/2}))$ uniformly over $i=\overline{1,n}$. Combining these bounds gives the claim of the proposition and completes the proof.
\end{proof}

\begin{proof}[Proof of Proposition \ref{lem: kernel function}]
Let $s=(x,h)\in\MS_n$. Since $h\leq (s_r-s_l)/2$, either $s_l+h\leq x$ or $x+h\leq s_r$ holds. Let $\MS_{n,1}$ and $\MS_{n,2}$ denote the subsets of those elements of $\MS_n$ that satisfy the former and latter cases, respectively, so that $\MS_n=\MS_{n,1}\cup\MS_{n,2}$. Consider $\MS_{n,1}$. Let $C_K\in(0,1)$. Since the kernel $K(\cdot)$ is continuous and strictly positive on the interior of its support, $\min_{t\in[-C_K,0]}K(t)>0$. In addition, since $K(\cdot)$ is bounded, it is possible to find a constant $c_K\in(0,1)$ such that and $c_K+C_K\leq 1$ and
\begin{equation}\label{eq: fancy kernel constraint}
6c_K^{k+1}C_3\max_{t\in[-1,-1+c_K]}K(t)\leq c_3(1-c_K)^k C_K\min_{t\in[-C_K,0]}K(t)
\end{equation}
where the constant $k$ appears in the definition of kernel weighting functions.

Denote
\begin{align*}
M_{n,1}(x,h)&=\{i=\overline{1,n}:X_i\in[x-C_K h,x]\},\\
M_{n,2}(x,h)&=\{i=\overline{1,n}:X_i\in[x-h,x-(1-c_K)h],\}\\
M_{n,3}(x,h)&=\{i=\overline{1,n}:X_i\in[x-(1-c_K/2)h,x-(1-c_K)h]\},\\
M_{n,4}(x,h)&=\{i=\overline{1,n}:X_i\in[x-h,x+h]\}.
\end{align*}
Since $h_{\min}\geq (\log n)^2/(C_6n)$ w.p.a.1 by assumption, Lemma \ref{lem: technical lemma} and Assumption \ref{as: design difficult} give
\begin{align}
(1/2)c_3C_K n h\leq|M_{n,1}(x,h)|&\leq (3/2)C_3C_K n h,\nonumber\\
(1/2)c_3c_K n h\leq |M_{n,2}(x,h)|&\leq (3/2)C_3c_K n h,\nonumber\\
(1/2)c_3(c_K/2)n h\leq |M_{n,3}(x,h)|&\leq (3/2)C_3(c_K/2)nh,\nonumber\\
(1/2)c_3n h\leq |M_{n,4}(x,h)|&\leq (3/2)C_32n h\label{eq: Mnh4}
\end{align}
w.p.a.1. uniformly over $s=(x,h)\in\MS_{n,1}$. Note also that (\ref{eq: Mnh4}) holds w.p.a.1 uniformly over $s=(x,h)\in\MS_{n,2}$ as well.
Then
\begin{eqnarray*}
\sum_{1\leq j\leq n}\sign(X_j-X_i)|X_j-X_i|^kK((X_j-x)/h)
&\geq&
\sum_{j\in M_{n,1}(x,h)}((1-c_K)h)^k K((X_j-x)/h) \\
&&- \sum_{j\in M_{n,2}(x,h)}(c_Kh)^kK((X_j-x)/h)\\
&\geq&
((1-c_k)h)^k(1/2)c_3C_knh\min_{t\in[-C_K,0]}K(t)\\
&& - (c_Kh)^k(3/2)C_3c_Knh\max_{t\in[-1,-1+c_k]}K(t)\\
&\geq&
((1-c_K)h)^kc_3C_Knh\min_{t\in[-C_K,0]}K(t)/4\\
&\geq& C n h^{k+1}
\end{eqnarray*}
w.p.a.1 uniformly over $X_i\in M_{n,3}(x,h)$ and $s=(x,h)\in\MS_{n,1}$ for some $C>0$ where the inequality preceding the last one follows from (\ref{eq: fancy kernel constraint}). Hence,
\begin{align*}
V(s)&=\sum_{1\leq i\leq n}\sigma_i^2\(\sum_{1\leq j\leq n}\sign(X_j-X_i)Q(X_i,X_j,s)\)^2\\
&=\sum_{1\leq i\leq n}\sigma_i^2K((X_i-x)/h)^2\(\sum_{1\leq j\leq n}\sign(X_j-X_i)|X_j-X_i|^kK((X_j-x)/h)\)^2\\
&\geq
\sum_{i\in M_{n,3}(x,h)}\sigma_i^2K((X_i-x)/h)^2\(\sum_{1\leq j\leq n}\sign(X_j-X_i)|X_j-X_i|^kK((X_j-x)/h)\)^2,
\end{align*}
and so $V(s)\geq C(nh)^3h^{2k}$ w.p.a.1 uniformly over $s=(x,h)\in\MS_{n,1}$ for some $C>0$. Similar argument gives $V(s)\geq C(nh)^3h^{2k}$ w.p.a.1 uniformly over $s=(x,h)\in\MS_{n,2}$ for some $C>0$. In addition,
\begin{equation}\label{eq: denominator one bound}
\left|\sum_{1\leq j\leq n}\sign(X_j-X_i)Q(X_i,X_j,s)\right|\leq (2h)^k|M_{n,4}(x,h)|\left(\max_{t\in[-1,+1]}K(t)\right)^2 \leq Cnh^{k+1}
\end{equation}
w.p.a.1 uniformly over $i=\overline{1,n}$ and $s=(x,h)\in\MS_{n}$ for some $C>0$. Combining (\ref{eq: denominator one bound}) with the bound on $V(s)$ above gives $A_n\leq C/(nh_{\min})$ w.p.a.1 for some $C>0$. Now, for the basic set of weighting functions, $\log p\leq C\log n$ and $h_{\min}\geq Cn^{-1/3}$ w.p.a.1 for some $C>0$, and so Assumption A\ref{as: growth condition} holds, which gives the claim (a).

To prove the claim (b), note that
\begin{align}
\sum_{1\leq i\leq n}\(\sum_{1\leq j\leq n}\sign(X_j-X_i)Q(X_i,X_j,s)\)^2&\leq (2h)^{2k}|M_{n,4}(x,h)|^3\left(\max_{t\in[-1,+1]}K(t)\right)^4\label{eq: variance calculations bound 1}\\
&\leq C(nh)^3h^{2k}\label{eq: variance calculations bound 2}
\end{align}
w.p.a.1 uniformly over $s=(x,h)\in\MS_n$ for some $C>0$ by (\ref{eq: Mnh4}). Therefore, under Assumption A\ref{as: sigma},
\begin{align*}
|\hat{V}(s)-V(s)|&\leq \max_{1\leq i\leq n}|\hat{\sigma}_i^2-\sigma_i^2|\sum_{1\leq i\leq n}\(\sum_{1\leq j\leq n}\sign(X_j-X_i)Q(X_i,X_j,s)\)^2\\
&\leq (n h)^3h^{2k}o_p(n^{-\kappa_2})
\end{align*}
uniformly over $s=(x,h)\in\MS_{n}$. Combining this bound with the lower bound for $V(s)$ established above shows that under Assumption A\ref{as: sigma}, $|\hat{V}(s)/V(s)-1|=o_p(n^{-\kappa_2})$, and so 
\begin{eqnarray*}
|(\hat{V}(s)/V(s))^{1/2}-1|&=&o_p(n^{-\kappa_2}),\\
|(V(s)/\hat{V}(s))^{1/2}-1|&=&o_p(n^{-\kappa_2})
\end{eqnarray*}
uniformly over $\MS_{n}$, which is the asserted claim (b).

To prove the last claim, note that for $s=(x,h)\in\MS_{n}$,
$$
|\hat{V}(s)-V(s)|\leq I_1(s)+I_2(s)
$$
where
\begin{align*}
I_1(s)&=\left|\sum_{1\leq i\leq n}(\eps_i^2-\sigma_i^2)\(\sum_{1\leq j\leq n}\sign(X_j-X_i)Q(X_i,X_j,s)\)^2\right|,\\
I_2(s)&=\left|\sum_{1\leq i\leq n}(\hat{\sigma}_i^2-\eps_i^2)\(\sum_{1\leq j\leq n}\sign(X_j-X_i)Q(X_i,X_j,s)\)^2\right|.
\end{align*}
Consider $I_1(s)$. Combining (\ref{eq: denominator one bound}) and Lemma \ref{lem: technical lemma} applied conditional on $\{X_i\}$ gives
\begin{equation}\label{eq: variance derivations 1}
I_1(s)=(n h)^{2}h^{2k}\log p O_p(n^{1/2})
\end{equation}
uniformly over $s=(x,h)\in \MS_n$.

Consider $I_2(s)$. Note that 
$$
I_2(s)\leq I_{2,1}(s)+I_{2,2}(s)
$$
where
\begin{align*}
I_{2,1}(s)&=\sum_{1\leq i\leq n}(\hat{\sigma}_i-\eps_i)^2\(\sum_{1\leq j\leq n}\sign(X_j-X_i)Q(X_i,X_j,s)\)^2,\\
I_{2,2}(s)&=2\sum_{1\leq i\leq n}|\eps_i(\hat{\sigma}_i-\eps_i)|\(\sum_{1\leq j\leq n}\sign(X_j-X_i)Q(X_i,X_j,s)\)^2.
\end{align*}
Now, Assumption A\ref{as: function estimation} combined with (\ref{eq: variance calculations bound 1}) and (\ref{eq: variance calculations bound 2}) gives 
\begin{equation}\label{eq: variance derivations 2}
I_{2,1}(s)\leq \max_{1\leq i\leq n}(\hat{\sigma}_i-\eps_i)^2\sum_{1\leq i\leq n}\(\sum_{1\leq j\leq n}\sign(X_j-X_i)Q(X_i,X_j,s)\)^2\leq  (nh)^3h^{2k}o_p(n^{-2\kappa_1})
\end{equation}
uniformly over $s=(x,h)\in\MS_{n}$. In addition,
$$
I_{2,2}(s)\leq 2\max_{1\leq i\leq n}|\hat{\sigma}_i-\eps_i|\sum_{1\leq i\leq n}|\eps_i|\(\sum_{1\leq j\leq n}\sign(X_j-X_i)Q(X_i,X_j,s)\)^2,
$$
and since
$$
\Ep\left[\sum_{1\leq i\leq n}|\eps_i|\(\sum_{1\leq j\leq n}\sign(X_j-X_i)Q(X_i,X_j,s)\)^2|\{X_i\}\right]\leq C(nh)^3h^{2k}
$$
w.p.a.1 uniformly over $s=(x,h)\in\MS_{n}$ for some $C>0$, it follows that
\begin{equation}\label{eq: variance derivations 3}
I_{2,2}(s)= (n h)^3h^{2k}o_p(n^{-\kappa_1})
\end{equation}
uniformly over $s=(x,h)\in\MS_{n}$. Combining (\ref{eq: variance derivations 1})-(\ref{eq: variance derivations 3}) with the lower bound for $V(s)$ established above shows that under Assumption A\ref{as: function estimation}, $|\hat{V}(s)/V(s)-1|=o_p(n^{-\kappa_1})+O_p(\log p/(hn^{1/2}))$ uniformly over $s\in\MS_n$. This gives the asserted claim (c) and completes the proof of the theorem.
\end{proof}

\begin{proof}[Proof of Proposition \ref{lem: asumption 9 verification}]
Recall that $h_n=(\log p/n)^{1/(2\beta+3)}$. Therefore, for the basic set of weighting functions, there exists $c\in(0,1)$ such that for all $n$, there is $\tilde{h}\in H_n$ satisfying $\tilde{h}\in(ch_n/3,h_n/3)$. By Lemma \ref{lem: technical lemma} and Assumption A\ref{as: design difficult}, w.p.a.1, for all $[x_1,x_2]\subset[s_l,s_r]$ with $x_2-x_1=h_n$, there exists $i=\overline{1,n}$ such that $X_i\in[x_1+h_n/3,x_2-h_n/3]$. Then the weighting function with $(x,h)=(X_i,\tilde{h})$ satisfies conditions of Assumption A\ref{as: weighting functions difficult}.
\end{proof}

\section{Useful Lemmas}\label{sec: technical lemma}
\begin{lemma}\label{lem: technical lemma}
Let $W_1,\dots,W_n$ be an i.i.d. sequence of random variables with the support $[s_l,s_r]$ such that $c_1(x_2-x_1)\leq \Pr(W_1\in[x_1,x_2])\leq C_1(x_2-x_2)$ for some $c_1,C_1>0$ and all $[x_1,x_2]\subset[s_l,s_r]$. Then for any $c_2>0$, 
\begin{equation}\label{eq: observation drop bounds}
(1/2)c_1n(x_2-x_1)\leq\left|\{i=\overline{1,n}:W_i\in[x_1,x_2]\}\right|\leq (3C_1/2)n(x_2-x_1)
\end{equation}
simultaneously for all intervals $[x_1,x_2]\subset [s_l,s_r]$ satisfying $x_2-x_1\geq c_2(\log n)^2/n$ w.p.a.1.
\end{lemma}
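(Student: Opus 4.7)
The plan is to combine a Bernstein-type concentration bound on a fixed interval with a discretization-plus-sandwich argument to obtain the required uniformity over all intervals. First, I would introduce a deterministic grid $G_n=\{s_l+j\delta_n:j=0,1,\dots\}\cap[s_l,s_r]$ with mesh $\delta_n=n^{-3}$, and consider the collection $\mathcal{I}_n$ of intervals $[\tilde x_1,\tilde x_2]$ with both endpoints in $G_n$. The cardinality $|\mathcal{I}_n|$ is at most of polynomial order in $n$ (say $O(n^6)$), which is what will allow a union bound to succeed.

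Next, for every $[\tilde x_1,\tilde x_2]\in\mathcal{I}_n$ with $\tilde x_2-\tilde x_1\geq c_2(\log n)^2/(2n)$, I would apply Bernstein's inequality to the i.i.d.\ Bernoulli indicators $1\{W_i\in[\tilde x_1,\tilde x_2]\}$. Writing $p=\Pr(W_1\in[\tilde x_1,\tilde x_2])$ and $N=|\{i=\overline{1,n}:W_i\in[\tilde x_1,\tilde x_2]\}|$, and choosing deviation $np/3$, Bernstein gives $\Pr(|N-np|>np/3)\leq 2\exp(-np/20)$. The lower density bound yields $np\geq c_1c_2(\log n)^2/2$, so each failure probability is at most $2\exp(-c(\log n)^2)$ for some $c>0$. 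Because $|\mathcal{I}_n|\exp(-c(\log n)^2)=o(1)$, a union bound shows that, w.p.a.1,
\[
\tfrac{2}{3}np(\tilde x_1,\tilde x_2)\;\leq\;N(\tilde x_1,\tilde x_2)\;\leq\;\tfrac{4}{3}np(\tilde x_1,\tilde x_2)
\]
simultaneously for every such grid interval.

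Finally, to transfer this control to arbitrary $[x_1,x_2]\subset[s_l,s_r]$ with $x_2-x_1\geq c_2(\log n)^2/n$, I would sandwich $[x_1,x_2]$ between two grid intervals $[\tilde x_1,\tilde x_2]\subseteq[x_1,x_2]\subseteq[\hat x_1,\hat x_2]$, all four endpoint gaps being at most $\delta_n$. Monotonicity of counting yields $N(\tilde x_1,\tilde x_2)\leq N(x_1,x_2)\leq N(\hat x_1,\hat x_2)$, while the density bounds give $p(\tilde x_1,\tilde x_2)\geq c_1(x_2-x_1-2\delta_n)$ and $p(\hat x_1,\hat x_2)\leq C_1(x_2-x_1+2\delta_n)$. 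Since $\delta_n/(x_2-x_1)\leq 1/(c_2 n^2(\log n)^2)=o(1)$, combining these sandwich bounds with the grid concentration above tightens the factors $2/3$ and $4/3$ to $1/2$ and $3/2$ for $n$ large, uniformly in $[x_1,x_2]$, as required.

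The main technical point is verifying that the scaling $c_2(\log n)^2/n$ is sharp for the argument: a single extra $\log n$ factor beyond the $\log n/n$ pointwise regime is exactly what defeats the polynomial-sized union bound, so there is no real obstacle beyond bookkeeping of constants. Nothing in the argument uses independence beyond what Bernstein requires, so the verification of Bernstein's hypotheses and the choice of $\delta_n$ small enough that $\delta_n\cdot n/(\log n)^2=o(1)$ are the only items that need explicit handling.
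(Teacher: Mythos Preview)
Your proof is correct and follows essentially the same strategy as the paper's: discretize the family of intervals, apply Bernstein's inequality to each grid interval, and use a union bound that succeeds because the failure probability $\exp(-c(\log n)^2)$ beats any polynomial in $n$.

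The one genuine difference is the choice of discretization. The paper partitions $[s_l,s_r]$ into roughly $L\sim n/(\log n)^2$ cells of width $\Delta\approx c_2(\log n)^2/n$ (the minimal allowed interval length) and applies Bernstein only to these $L$ basic cells; an arbitrary admissible interval is then handled implicitly by summing over the cells it contains, with at most two boundary cells introducing negligible error. You instead take a much finer mesh $\delta_n=n^{-3}$, apply Bernstein to all $O(n^6)$ grid intervals of admissible length, and then sandwich an arbitrary interval between two grid intervals differing from it by $O(\delta_n)$. Your route costs a larger (but still polynomial) union bound and gains a cleaner, fully explicit reduction from arbitrary intervals to grid intervals; the paper's route uses a smaller union bound but leaves the ``it suffices'' step to the reader. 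Both are valid, and the key analytic input---that $np\gtrsim (\log n)^2$ makes the Bernstein tail $\exp(-c(\log n)^2)$ summable over polynomially many intervals---is identical.
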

\begin{proof}
Let $L=[(s_r-s_l)/(c_2(\log n)^2/n)]$ where $[x]$ denotes the largest integer smaller than or equal to  $x$. Denote $\Delta=(s_r-s_l)/L$. For $l=0,\dots,L$, denote $y_l=s_l+l\Delta/L$. It suffices to show that (\ref{eq: observation drop bounds}) holds simultaneously for all intervals $[x_1,x_2]$ of the form $[y_{l-1},y_l]$ for $l=\overline{1,L}$. 

For $l=\overline{1,L}$, let $I_{i,l}=1\{W_i\in[y_{l-1},y_l]\}$. Then 
$$
\sum_{1\leq i\leq n}I_{i,l}=\left|\{i=\overline{1,n}:W_i\in[y_{l-1},y_l]\}\right|.
$$
In addition, $\Ep[I_{i,l}]=\Ep[I_{i,l}^2]=\Pr(W_i\in[y_{l-1},y_l])$, so that $c_1n\Delta\leq \Ep[\sum_{1\leq i\leq n}I_{i,l}]\leq C_1n\Delta$ and $\text{Var}(\sum_{1\leq i\leq n}I_{i,l})\leq n\Ep[I_{i,l}^2]\leq C_1n\Delta$. Hence, Bernstein's inequality (see, for example, Lemma 2.2.9 in \cite{VaartWellner1996}) gives
\begin{eqnarray*}
\Pr\left(\sum_{1\leq i\leq n}I_{i,l}>(3/2)C_1n\Delta\right)&\leq& \exp(-C(\log n)^2),\\
\Pr\left(\sum_{1\leq i\leq n}I_{i,l}<(1/2)c_1n\Delta\right)&\leq& \exp(-C(\log n)^2)
\end{eqnarray*}
for some $C>0$. Therefore, by the union bound,
\begin{align*}
&\Pr\left(\sum_{1\leq i\leq n}I_{i,l}>(3/2)C_1n\Delta\,\text{ or }\sum_{1\leq i\leq n}I_{i,l}<(1/2)c_1n\Delta\text{ for some }l=\overline{1,L}\right)\\
&\leq 2L\exp(-C(\log n)^2)=o(1)
\end{align*}
where the last conclusion follows from $L\leq Cn$ for some $C>0$. This completes the proof of the lemma.
\end{proof}

\begin{lemma}\label{lem: maximal inequality CCK}
Let $W_1,\dots,W_n$ be independent random vectors in $\RR^p$ with $p\geq 2$. Let $W_{ij}$ denote the $j$th component of $W_i$, that is $W_i=(W_{i1},...,W_{ip})^T$. Define $M=\max_{1\leq j\leq n}\max_{1\leq j\leq p}|W_{ij}|$ and $\sigma^2=\max_{1\leq j\leq p}\sum_{1\leq i\leq n}\Ep[W_{ij}^2]$. Then
$$
\Ep\left[\max_{1\leq j\leq p}\left|\sum_{1\leq i\leq n}(W_{ij}-\Ep[W_{ij}])\right|\right]\leq C\left(\sigma\sqrt{\log p}+\sqrt{\Ep[M^2]}\log p\right)
$$
for some universal $C>0$.
\end{lemma}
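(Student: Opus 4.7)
The plan is to combine a symmetrization argument with the Ledoux--Talagrand contraction principle, closing the bound on the quadratic envelope via an AM--GM step that avoids higher moments of $M$.

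First, without loss of generality center so that $\Ep W_{ij}=0$. By the classical symmetrization inequality, with $\epsilon_1,\dots,\epsilon_n$ independent Rademacher variables independent of the data,
$$
\Ep\Bigl[\max_{j}\Bigl|\sum_i W_{ij}\Bigr|\Bigr]\leq 2\,\Ep\Bigl[\max_{j}\Bigl|\sum_i\epsilon_iW_{ij}\Bigr|\Bigr].
$$
Conditional on $\{W_i\}_{i=1}^n$, each $\sum_i\epsilon_iW_{ij}$ is sub-Gaussian with parameter $(\sum_iW_{ij}^2)^{1/2}$, so Lemma 2.2.2 of \cite{VaartWellner1996}, followed by Jensen's inequality, yields
$$
\Ep\Bigl[\max_{j}\Bigl|\sum_i W_{ij}\Bigr|\Bigr]\leq C\sqrt{\log p}\,\sqrt{A},\qquad A:=\Ep\Bigl[\max_{j}\sum_iW_{ij}^2\Bigr].
$$

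The main work is to bound $A$. Peel off the expectation, $\max_{j}\sum_iW_{ij}^2\leq\sigma^2+\max_{j}|\sum_i(W_{ij}^2-\Ep W_{ij}^2)|$. Symmetrize the fluctuation and apply the Ledoux--Talagrand contraction principle conditionally on $\{W_i\}$: on that conditioning the map $t\mapsto t^2$ is Lipschitz with constant $2M$ on $[-M,M]$ and vanishes at zero, so
$$
\Ep_\epsilon\Bigl[\max_{j}\Bigl|\sum_i\epsilon_iW_{ij}^2\Bigr|\Bigr]\leq 4M\,\Ep_\epsilon\Bigl[\max_{j}\Bigl|\sum_i\epsilon_iW_{ij}\Bigr|\Bigr].
$$
Taking expectations, applying Cauchy--Schwarz in the $M$-factor, and using the $L^2$ form of the sub-Gaussian maximal inequality (again conditionally on $\{W_i\}$),
$$
\Ep\Bigl[\max_{j}\Bigl|\sum_i(W_{ij}^2-\Ep W_{ij}^2)\Bigr|\Bigr]\leq C\sqrt{\Ep[M^2]\,\log p}\,\sqrt{A}.
$$

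Combining these estimates gives the self-referential bound $A\leq\sigma^2+C\sqrt{\Ep[M^2]\log p}\,\sqrt{A}$. By AM--GM this forces $A\leq 2\sigma^2+C'\Ep[M^2]\log p$, whence $\sqrt{A}\leq\sqrt{2}\,\sigma+C''\sqrt{\Ep[M^2]\log p}$. Substituting into the first display produces the claimed bound $C(\sigma\sqrt{\log p}+\sqrt{\Ep[M^2]}\,\log p)$. The main obstacle is precisely the quadratic-envelope control: a naive iteration of the lemma applied to the squared variables $W_{ij}^2-\Ep W_{ij}^2$ drags in $\Ep[M^4]$ and, continuing recursively, ever higher moments of $M$. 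Using the Ledoux--Talagrand contraction to downgrade the squares back to a linear Rademacher sum at the cost of an $M$-factor, which is then absorbed through Cauchy--Schwarz, is what keeps the argument closed using only the second moment $\Ep[M^2]$ that appears in the statement.
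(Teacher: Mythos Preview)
Your proof is correct. The paper itself does not supply a proof of this lemma; it simply cites Lemma~8 of \cite{ChernozhukovKato2011}. Your argument---symmetrization, conditional sub-Gaussian maximal inequality, Ledoux--Talagrand contraction to linearize the squares, Cauchy--Schwarz to separate the $M$-factor, and the self-referential AM--GM step to close the bound on $A$---is in fact the standard route and is essentially how the cited result is proven. So you have filled in exactly what the paper defers to the reference.
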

\begin{proof}
See Lemma 8 in \cite{ChernozhukovKato2011}.
\end{proof}

\begin{lemma}\label{lem: gaussian comparison}
Let $W^1$ and $W^2$ be zero-mean Gaussian vectors in $\RR^p$ with covariances $\Sigma^1$ and $\Sigma^2$, respectively. Let $W^k_j$ denote the $j$th component of $W^k$, $k=1$ or $2$, that is $W^k=(W^k_1,...,W^k_p)^T$. Then for any $g\in\mathbb{C}^2(\RR,\RR)$,
$$
\left|\Ep\left[g\left(\max_{1\leq j\leq p}W^1_j\right)-g\(\max_{1\leq j\leq p}W^2_j\)\right]\right|\leq \Vert g^{(2)}\Vert_\infty\Delta_{\Sigma}/2+2\Vert g^{(1)}\Vert_\infty\sqrt{2\Delta_{\Sigma}\log p}
$$
where $\Delta_{\Sigma}=\max_{1\leq j,k\leq p}|\Sigma^1_{jk}-\Sigma^2_{jk}|$.
\end{lemma}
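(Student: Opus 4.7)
The plan is to combine a Slepian-type Gaussian interpolation with a soft-max smoothing of $\max_{1\leq j\leq p} x_j$, together with Gaussian integration by parts. Without loss of generality I will assume $W^1$ and $W^2$ are defined on the same probability space and are independent (this is permissible because the conclusion only depends on the marginal laws).

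First, I would replace the non-smooth functional $x\mapsto \max_j x_j$ by the smoothed soft-max $F_\beta(x) = \beta^{-1}\log\bigl(\sum_{j=1}^p e^{\beta x_j}\bigr)$ for some $\beta>0$ to be chosen, which satisfies $\max_j x_j \leq F_\beta(x)\leq \max_j x_j + \beta^{-1}\log p$. Direct differentiation yields $\partial_j F_\beta = \pi_j$ and $\partial^2_{jk} F_\beta = \beta(\pi_j \mathbf{1}\{j=k\} - \pi_j \pi_k)$, where $\pi_j = e^{\beta x_j}/\sum_l e^{\beta x_l}$ are probability weights. In particular, $\sum_{j,k}|\partial_j F_\beta \partial_k F_\beta|\leq 1$ and $\sum_{j,k}|\partial^2_{jk}F_\beta|\leq 2\beta$. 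Setting $h=g\circ F_\beta$, the chain rule then gives $\sum_{j,k}|\partial^2_{jk}h(x)|\leq \|g^{(2)}\|_\infty + 2\beta\|g^{(1)}\|_\infty$ uniformly in $x$.

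Next, I would interpolate via $Z(t) = \sqrt{t}\, W^1 + \sqrt{1-t}\,W^2$ for $t\in[0,1]$, so $Z(t)$ is zero-mean Gaussian with covariance $t\Sigma^1 + (1-t)\Sigma^2$. Differentiating $\phi(t) = \Ep[h(Z(t))]$ and applying Gaussian integration by parts (Stein's identity) to each $W^1_j$ and $W^2_j$ term cancels the $1/\sqrt{t}$ and $1/\sqrt{1-t}$ singularities and yields the classical identity
\[
\phi'(t) = \tfrac{1}{2}\sum_{j,k}\bigl(\Sigma^1_{jk}-\Sigma^2_{jk}\bigr)\,\Ep\bigl[\partial^2_{jk}h(Z(t))\bigr].
\]
Integrating over $t\in[0,1]$ and using the bound on $\sum_{j,k}|\partial^2_{jk}h|$ together with $|\Sigma^1_{jk}-\Sigma^2_{jk}|\leq \Delta_\Sigma$ gives
\[
\bigl|\Ep[h(W^1)-h(W^2)]\bigr| \leq \tfrac{1}{2}\Delta_\Sigma\bigl(\|g^{(2)}\|_\infty + 2\beta\|g^{(1)}\|_\infty\bigr).
\]

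Finally, the soft-max approximation error satisfies $|\Ep[g(\max_j W^k_j) - h(W^k)]|\leq \|g^{(1)}\|_\infty \beta^{-1}\log p$ for $k=1,2$, so adding these three pieces gives
\[
\bigl|\Ep[g(\max_j W^1_j) - g(\max_j W^2_j)]\bigr| \leq \tfrac{1}{2}\Delta_\Sigma\|g^{(2)}\|_\infty + \beta\Delta_\Sigma\|g^{(1)}\|_\infty + 2\beta^{-1}\|g^{(1)}\|_\infty\log p.
\]
Minimizing the last two terms over $\beta>0$ at $\beta = \sqrt{2\log p/\Delta_\Sigma}$ yields the stated bound $\tfrac{1}{2}\|g^{(2)}\|_\infty\Delta_\Sigma + 2\|g^{(1)}\|_\infty\sqrt{2\Delta_\Sigma \log p}$. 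The main technical step is the Gaussian integration-by-parts identity for $\phi'(t)$; everything else is bookkeeping on the derivatives of the soft-max and optimization of $\beta$.
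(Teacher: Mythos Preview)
Your argument is correct and is precisely the Slepian--Stein interpolation combined with soft-max smoothing that underlies Theorem~1 of \cite{ChernozhukovKato2011}, which is what the paper cites in lieu of a proof. In other words, you have supplied the details the paper defers to that reference; there is nothing materially different to compare.
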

\begin{proof}
See Theorem 1 and following comments in \cite{ChernozhukovKato2011}.
\end{proof}

\begin{lemma}\label{lem: gaussian approximation original}
Let $x_1,...,x_n$ be a sequence of independent zero-mean vectors in $\RR^p$ with $x_{ij}$ denoting the $j$th component of $x_i$, that is $x_i=(x_{i1},...,x_{ip})^T$. Let $y_1,...,y_n$ be a sequence of independent zero-mean Gaussian vectors in $\RR^p$ with $y_{ij}$ denoting the $j$th component of $y_i$, that is $y_i=(y_{i1},...,y_{ip})^T$. Assume that $\Ep[x_ix_i^T]=\Ep[y_iy_i^T]$ for all $i=\overline{1,n}$. Further, assume that for all $i$ and $j$, $x_{ij}=z_{ij}u_{i}$ where $z_{ij}$'s are non-stochastic with $|z_{ij}|\leq B_n$ and $\sum_{1\leq i\leq n}z_{ij}^2/n=1$ where $\{B_n\}$ is a sequence of positive constants. Finally, assume that for some constants $c_1,C_1,c_2,C_2>0$, one of the following conditions holds: (i) $\Ep[u_{i}^2]\geq c_1$, $\Ep[\exp(|u_{i}|/C_1)]\leq 2$, and $B_n^2(\log(pn))^7/n\leq C_2n^{-c_2}$ or (ii) $\Ep[u_{i}^2]\geq c_1$, $\Ep[u_{i}^4]\leq C_1$, and $B_n^4(\log(pn))^7/n\leq C_2n^{-c_2}$. Then there exist constants $c,C>0$ depending only on $c_1,C_1,c_2,C_2$ such that
\begin{equation}\label{eq: gaussian approximation original}
\sup_{t\in\RR}\left|\Pr\left(\max_{1\leq j\leq p}\frac{1}{\sqrt{n}}\sum_{1\leq i\leq n}x_{ij}\leq t\right)-\Pr\left(\max_{1\leq j\leq p}\frac{1}{\sqrt{n}}\sum_{1\leq i\leq n}y_{ij}\leq t\right)\right|\leq Cn^{-c}
\end{equation}
for all $n$. In addition, if the terms $C_2n^{-c_2}$ above are replaced by $\eta_n$ where $\{\eta_n\}$ is a sequence of positive numbers converging to zero, then there exists another sequence $\{\eta_n'\}$ of positive numbers converging to zero and depending only on $\{\eta_n\}$ such that
\begin{equation}\label{eq: gaussian approximation modified}
\sup_{t\in\RR}\left|\Pr\left(\max_{1\leq j\leq p}\frac{1}{\sqrt{n}}\sum_{1\leq i\leq n}x_{ij}\leq t\right)-\Pr\left(\max_{1\leq j\leq p}\frac{1}{\sqrt{n}}\sum_{1\leq i\leq n}y_{ij}\leq t\right)\right|\leq \eta_n'
\end{equation}
for all $n$.
\end{lemma}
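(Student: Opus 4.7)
The plan is to invoke the high-dimensional Gaussian approximation theory of \cite{ChernozhukovChetverikov12} and \cite{ChernozhukovKato2011}, using the special product structure $x_{ij}=z_{ij}u_{i}$ to convert assumptions on the scalar variables $u_i$ into the coordinate-wise moment conditions required by that theory. First I would translate the hypotheses: under (i), the sub-exponential bound $\Ep[\exp(|u_i|/C_1)]\leq 2$ combined with $|z_{ij}|\leq B_n$ gives $\Ep[\exp(|x_{ij}|/(B_nC_1))]\leq 2$, so the coordinates are sub-exponential with parameter of order $B_n$; under (ii), one immediately has $\Ep[x_{ij}^4]\leq B_n^4 C_1$. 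In either case, $\Ep[u_i^2]\geq c_1$ and $\sum_{1\leq i\leq n}z_{ij}^2/n=1$ imply that the variance of $n^{-1/2}\sum_i x_{ij}$ is bounded below by $c_1$ for each $j$, so the normalization required by the CCK results holds uniformly in $j$ with constants depending only on $c_1,C_1$.

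Next I would apply the Gaussian approximation inequality (for example, Proposition 2.1 of \cite{ChernozhukovChetverikov12}) to the triangular array $\{x_i\}_{1\leq i\leq n}$. Under the sub-exponential case (i), that result supplies a Kolmogorov-distance bound for $\sup_t|\Pr(\max_j n^{-1/2}\sum_i x_{ij}\leq t)-\Pr(\max_j n^{-1/2}\sum_i y_{ij}\leq t)|$ of the form $\tilde{C}\{B_n^2(\log(pn))^7/n\}^{1/c'}$ for universal exponents, which under the growth condition $B_n^2(\log(pn))^7/n\leq C_2 n^{-c_2}$ becomes $Cn^{-c}$ with $c,C$ depending only on $c_1,C_1,c_2,C_2$. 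Under case (ii), the analogous bounded-fourth-moment version of the CCK theorem uses $B_n^4(\log(pn))^7/n$ as the driving quantity, which matches the stronger growth assumption imposed in (ii). This gives (\ref{eq: gaussian approximation original}).

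To obtain (\ref{eq: gaussian approximation modified}), where the polynomial bound $C_2 n^{-c_2}$ is replaced by an arbitrary sequence $\eta_n\to 0$, I would use the following extraction argument. For each integer $k\geq 1$, the polynomial-rate statement already proved implies that the left-hand side of (\ref{eq: gaussian approximation modified}) is bounded by some $\phi(B_n^2(\log(pn))^7/n)$ for a fixed continuous function $\phi$ with $\phi(0)=0$, depending only on $c_1,C_1$. Since $B_n^2(\log(pn))^7/n\leq \eta_n\to 0$, setting $\eta_n':=\phi(\eta_n)$ (or its monotone envelope from above if $\phi$ is not monotone) gives a sequence $\eta_n'\to 0$ depending only on $\{\eta_n\}$ and yielding (\ref{eq: gaussian approximation modified}). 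The case (ii) is identical with $B_n^4$ in place of $B_n^2$.

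The main obstacle is verifying that the constants produced by the CCK machinery depend only on $c_1,C_1,c_2,C_2$ and not on any other feature of the distribution of $u_i$, particularly since the original CCK statements are phrased in terms of somewhat different moment quantities (e.g., $\Ep[|x_{ij}|^{2+k}]$ or Orlicz norms). The delicate bookkeeping is in translating the product-form moment conditions through those bounds without introducing a dependence on additional distributional parameters; this is routine but tedious and is the step where care is needed. Once the polynomial-rate bound is in hand with the right constant structure, the extraction to (\ref{eq: gaussian approximation modified}) is a soft compactness argument.
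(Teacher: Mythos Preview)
Your approach is correct and essentially identical to the paper's own proof, which simply cites Corollary 2.1 of \cite{ChernozhukovChetverikov12} for (\ref{eq: gaussian approximation original}) and then remarks that inspecting that proof shows the bound is a function of the growth quantity $B_n^2(\log(pn))^7/n$ (resp.\ $B_n^4(\log(pn))^7/n$), so replacing $C_2n^{-c_2}$ by an arbitrary $\eta_n\to 0$ yields a corresponding $\eta_n'\to 0$. Your translation of the product-form moment conditions and your extraction argument for (\ref{eq: gaussian approximation modified}) are exactly the content behind that one-line citation.
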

\begin{proof}
The result in (\ref{eq: gaussian approximation original}) is proven in Corollary 2.1 of \cite{ChernozhukovChetverikov12}. Further, inspecting the proof of Corollary 2.1 of \cite{ChernozhukovChetverikov12} shows that the sequences $C_2n^{-c_2}$ and $Cn^{-c}$ in (\ref{eq: gaussian approximation original}) can be replaced by general sequences $\{\eta_n\}$ and $\{\eta_n'\}$ of positive numbers converging to zero, and so the result in (\ref{eq: gaussian approximation modified}) holds as well.
\end{proof}

\section*{Supplementary Appendix}
This supplementary Appendix contains additional simulation results. In particular, I consider the test developed in this paper with weighting functions of the form given in equation (\ref{eq: weighting function}) with $k=1$. The simulation design is the same as in Section \ref{sec: monte carlo}. The results are presented in table 2. For ease of comparison, I also repeat the results for the tests of GSV, GHJK, and HH in this table. Overall, the simulation results in table 2 are similar to those in table 1, which confirms the robustness of the findings in this paper.
\begin{table}
\caption{Results of Monte Carlo Experiments}
\begin{tabular}{cccccccccccc}
\hline 
\multirow{2}{*}{{\small Noise}} & \multirow{2}{*}{{\small Case}} & \multirow{2}{*}{{\small Sample}} & \multicolumn{9}{c}{{\small Proportion of Rejections for}}\tabularnewline
\cline{4-12} 
&&& {\small GSV} & {\small GHJK} & {\small HH} & {\small CS-PI} & {\small CS-OS} & {\small CS-SD} & {\small IS-PI} & {\small IS-OS} & {\small IS-SD} \tabularnewline
\hline 
\multicolumn{1}{c}{} && {\small 100} & {\small .118} & {\small .078} & {\small .123} & {\small .129} & {\small .129} & {\small .129} & {\small .166} & {\small .166} & {\small .166}\tabularnewline
{\small normal}&{\small 1} & {\small 200} & {\small .091} & {\small .051} & {\small .108} & {\small .120} & {\small .120} & {\small .120} & {\small .144} & {\small .144} & {\small .144}\tabularnewline
&& {\small 500} & {\small .086} & {\small .078} & {\small .105} & {\small .121} & {\small .121} & {\small .121} & {\small .134} & {\small .134} & {\small .134}\tabularnewline
\hline
&& {\small 100} & {\small 0} & {\small .001} & {\small 0} & {\small .002} & {\small .009} & {\small .009} & {\small .006} & {\small .024} & {\small .024}\tabularnewline
{\small normal}&{\small 2}& {\small 200} & {\small 0} & {\small .002} & {\small 0} & {\small .001} & {\small .012} & {\small .012} & {\small .007} & {\small .016} & {\small .016}\tabularnewline
&& {\small 500} & {\small 0} & {\small .001} & {\small 0} & {\small .002} & {\small .005} & {\small .005} & {\small .005} & {\small .016} & {\small .016}\tabularnewline
\hline 
&& {\small 100} & {\small 0} & {\small .148} & {\small .033} & {\small .238} & {\small .423} & {\small .432} & {\small 0} & {\small 0} & {\small 0}\tabularnewline
{\small normal}&{\small 3}& {\small 200} & {\small .010} & {\small .284} & {\small .169} & {\small .639} & {\small .846} & {\small .851} & {\small .274} & {\small .615} & {\small .626}\tabularnewline
&& {\small 500} & {\small .841} & {\small .654} & {\small .947} & {\small .977} & {\small .995} & {\small .996} & {\small .966} & {\small .994} & {\small .994}\tabularnewline
\hline 
&& {\small 100} & {\small .037} & {\small .084} & {\small .135} & {\small .159} & {\small .228} & {\small .231} & {\small .020} & {\small .040} & {\small .040}\tabularnewline
{\small normal}&{\small 4}& {\small 200} & {\small .254} & {\small .133} & {\small .347} & {\small .384} & {\small .513} & {\small .515} & {\small .372} & {\small .507} & {\small .514}\tabularnewline
&& {\small 500} & {\small .810} & {\small .290} & {\small .789} & {\small .785} & {\small .833} & {\small .833} & {\small .782} & {\small .835} & {\small .836}\tabularnewline
\hline 
\multicolumn{1}{c}{} && {\small 100} & {\small .109} & {\small .079} & {\small .121} & {\small .120} & {\small .120} & {\small .120} & {\small .200} & {\small .200} & {\small .200}\tabularnewline
{\small uniform}&{\small 1} & {\small 200} & {\small .097} & {\small .063} & {\small .109} & {\small .111} & {\small .111} & {\small .111} & {\small .154} & {\small .154} & {\small .154}\tabularnewline
&& {\small 500} & {\small .077} & {\small .084} & {\small .107} & {\small .102} & {\small .102} & {\small .102} & {\small .125} & {\small .125} & {\small .125}\tabularnewline
\hline
&& {\small 100} & {\small .001} & {\small .001} & {\small 0} & {\small 0} & {\small .006} & {\small .006} & {\small .015} & {\small .031} & {\small .031}\tabularnewline
{\small uniform}&{\small 2}& {\small 200} & {\small 0} & {\small 0} & {\small 0} & {\small .001} & {\small .009} & {\small .009} & {\small .013} & {\small .021} & {\small .024}\tabularnewline
&& {\small 500} & {\small 0} & {\small .003} & {\small 0} & {\small .003} & {\small .012} & {\small .012} & {\small .011} & {\small .021} & {\small .021}\tabularnewline
\hline 
&& {\small 100} & {\small 0} & {\small .151} & {\small .038} & {\small .225} & {\small .423} & {\small .433} & {\small 0} & {\small 0} & {\small 0}\tabularnewline
{\small uniform}&{\small 3}& {\small 200} & {\small .009} & {\small .233} & {\small .140} & {\small .606} & {\small .802} & {\small .823} & {\small .261} & {\small .575} & {\small .590}\tabularnewline
&& {\small 500} & {\small .811} & {\small .582} & {\small .947} & {\small .976} & {\small .993} & {\small .994} & {\small .971} & {\small .990} & {\small .991}\tabularnewline
\hline 
&& {\small 100} & {\small .034} & {\small .084} & {\small .137} & {\small .150} & {\small .216} & {\small .219} & {\small .020} & {\small .046} & {\small .046}\tabularnewline
{\small uniform}&{\small 4}& {\small 200} & {\small .197} & {\small .116} & {\small .326} & {\small .355} & {\small .483} & {\small .488} & {\small .328} & {\small .466} & {\small .472}\tabularnewline
&& {\small 500} & {\small .803} & {\small .265} & {\small .789} & {\small .803} & {\small .852} & {\small .855} & {\small .796} & {\small .859} & {\small .861}\tabularnewline
\hline 
\end{tabular}

{\small Nominal Size is 0.1. GSV, GHJK, and HH stand for the tests of \cite{GSV2000},
\cite{Gijbels2000}, and \cite{HallHeckman2000} respectively. CS-PI, CS-OS, and CS-SD refer to the test developed in this paper with $\sigma_i$ estimated using Rice's formula and plug-in, one-step, and step-down critical values respectively. Finally, IS-PI, IS-OS, and IS-SD refer to the test developed in this paper with $\sigma_i$ estimated by $\hat{\sigma}_i=\hat{\eps}_i$ and plug-in, one-step, and step-down critical values respectively.}
\end{table}


\begin{thebibliography}{99}
\bibitem[Anderson and Schmittlein(1984)]{AndersonSchmittlein}
Anderson, E., and Schmittlein, D. (1984). Integration of the Sales Force: An Empirical Examination. {\em RAND Journal of Economics}, \textbf{15}, 385-395.






\bibitem[Andrews and Shi(2010)]{AndrewsandShi2010}
Andrews, D. W. K., and Shi, X. (2013). Inference Based on Conditional Moment Inequalities. {\em Econometrica}, \textbf{81}, 609-666.


\bibitem[Angeletos and Werning(2006)]{AngeletosWerning2006}
Angeletos, M., and Werning, I. (2006). Information Aggregation, Multiplicity, and Volatility. {\em American Economic Review}, \textbf{96}.




\bibitem[Athey(2002)]{Athey2002}
Athey, S. (2002). Monotone Comparative Statics under Uncertainty. {\em The Quarterly Journal of Economics}, \textbf{117}, 187-223.


\bibitem[Beraud, Huet, and Laurent(2005)]{Beraud2005}
Beraud, Y., Huet, S., and Laurent, B. (2005). Testing Convex Hypotheses on the Mean of a Gaussian Vector. Application to Testing Qualitative Hypotheses on a Regression Function. {\em The Annals of Statistics}, \textbf{33}, 214-257.


\bibitem[Blundell, Chen, and Kristensen(2007)]{BCK07}
Blundell, R., Chen, X., and Kristensen, D. (2007). Semi-Nonparametric IV Estimation of Shape-Invariant Engel Curves. {\em Econometrica}, \textbf{75}, 1613-1669.

\bibitem[Boucheron, Bousquet, and Lugosi(2004)]{BoucheronBousquetLugosi}
Boucheron, S., Bousquet, O., and Lugosi, G. (2004). Concentration Inequalities. {\em Advanced Lectures in Machine Learning}, 208-240.

\bibitem[Bowman, Jones, and Gijbels(1998)]{Bowman98}
Bowman, A. W., Jones, M. C., and Gijbels, I. (1998). Testing Monotonicity of Regression. {\em Journal of Computational and Graphical Statistics}, \textbf{7}, 489-500.


\bibitem[Cai and Wang(2008)]{Cai2008}
Cai, T., and Wang, L. (2008). Adaptive Variance Function Estimation in Heteroscedastic Nonparametric Regression. {\em The Annals of Statistics}, \textbf{36}, 2025-2054.






\bibitem[Chernozhukov, Chetverikov, and Kato(2011)]{ChernozhukovKato2011}
Chernozhukov, V., Chetverikov, D., and Kato, K. (2011). Comparison and Anti-Concentration Bounds for Maxima of Gaussian Random Vectors. {\em arXiv:1301.4807}.

\bibitem[Chernozhukov, Chetverikov, and Kato(2012)]{ChernozhukovChetverikov12}
Chernozhukov,V., Chetverikov, D., and Kato, K. (2012). Gaussian Approximations and Multiplier Bootstrap for Maxima of Sums of High-Dimensional Random Vectors. {\em The Annals of Statistics, forthcoming}.

\bibitem[Chernozhukov, Lee, and Rosen(2013)]{ChernozhukovLeeRosen2009}
Chernozhukov, V., Lee, S. and Rosen, A. (2013). Intersection Bounds: Estimation and Inference. {\em Econometrica}, \textbf{81}, 667-737.

\bibitem[Chetverikov(2012)]{Chetverikov2012}
Chetverikov, D. (2012). Adaptive Test of Conditional Moment Inequalities. {\em arXiv:1201.0167v2}.

\bibitem[Darolles el. al.(2011)]{DFR12}
Darolles, S., Fan, Y., Florens, J., Renault, E. (2011). Nonparametric Instrumental Regression. {\em Econometrica}, \textbf{79}, 1541-1565.

\bibitem[Das, Newey, and Vella(2003)]{DNV03}
Das, M., Newey, W., and Vella, F. (2003). Nonparametric Estimation of Sample Selection Models. {\em Review of Economic Studies}, \textbf{70}, 33-58.

\bibitem[Delgado and Escanciano(2010)]{DelgadoEscanciano2010}
Delgado, M., and Escanciano, J. (2010). Testing Conditional Monotonicity in the Absence of Smoothness. {\em Unpublished manuscript}.

\bibitem[Dudley(1999)]{Dudley1999}
Dudley, R. (1999). Uniform Central Limit Theorems. {\em Cambridge Studies in Advanced Mathematics}.

\bibitem[Dumbgen and Spokoiny(2001)]{Dumbgen2001}
Dumbgen, L., and Spokoiny, V. (2001). Multiscale Testing of Qualitative Hypotheses. {\em The Annals of Statistics}, \textbf{29}, 124-152.

\bibitem[Durot(2003)]{Durot2003}
Durot, C. (2003). A Kolmogorov-type Test for Monotonicity of Regression. {\em Statistics and Probability Letters}, \textbf{63}, 425-433.

\bibitem[Ellison and Ellison(2011)]{EllisonEllison}
Ellison, G, and Ellison, S. (2011). Strategic Entry Deterrence and the Behavior of Pharmaceutical Incumbents Prior to Patent Expiration. {\em American Economic Journal: Microeconomics}, \textbf{3}, 1-36.



\bibitem[Fan and Yao(1998)]{FanYao1998}
Fan, J., and Yao, Q. (1998). Efficient Estimation of Conditional Variance Functions in Stochastic Regression. {\em Biometrika}, \textbf{85}, 645-660.

\bibitem[Ghosal, Sen, and van der Vaart(2000)]{GSV2000}
Ghosal, S., Sen, A., and van der Vaart, A. (2000). Testing Monotonicity of Regression. {\em The Annals of Statistics}, \textbf{28}, 1054-1082.

\bibitem[Gijbels et. al.(2000)]{Gijbels2000}
Gijbels, I., Hall, P., Jones, M., and Koch, I. (2000). Tests for Monotonicity of a Regression Mean with Guaranteed Level. {\em Biometrika}, \textbf{87}, 663-673.

\bibitem[Gutknecht(2013)]{Gutknecht2013}
Gutknecht, D. (2013). Testing Monotonicity under Endogeneity. {\em Unpublished manuscript}.




\bibitem[Hall and Heckman(2000)]{HallHeckman2000}
Hall, P. and Heckman, N. (2000). Testing for Monotonicity of a Regression Mean by Calibrating for Linear Functions. {\em The Annals of Statistics}, \textbf{28}, 20-39.

\bibitem[Hall and Horowitz(2005)]{HH05}
Hall, P. and Horowitz, J. (2005). Nonparametric Methods for Inference in the Presence of Instrumental Variables. {\em The Annals of Statistics}, \textbf{33}, 2904-2929.

\bibitem[Hansen(2000)]{Hansen00}
Hansen, B. (2000). Econometrics. {\em Unpublished manuscript}.

\bibitem[Hardle and Mammen(1993)]{HardleMammen1993}
Hardle, W., and Mammen, E. (1993). Comparing Nonparametric Versus Parametric Regression Fits. {\em The Annals of Statistics}, \textbf{21}, 1926-1947.

\bibitem[Hardle and Tsybakov(2007)]{HardleTsybakov2007}
Hardle, W., and Tsybakov, A. (2007). Local Polinomial Estimators of the Volatility Function in Nonparametric Autoregression. {\em Journal of Econometrics}, \textbf{81}, 233-242.


\bibitem[Heckman(1979)]{Heckman79}
Heckman, J. (1979). Sample Selection Bias as a Specification Error. {\em Econometrica}, \textbf{47}, 153-161.

\bibitem[Holm(1979)]{Holm1979}
Holm, S. (1979). A simple sequentially rejective multiple test procedure. {\em Scandinavian Journal of Statistics}, \textbf{6}, 65-70.

\bibitem[Holmstrom and Milgrom(1994)]{HolmstromMilgrom94}
Holmstrom, B., and Milgrom, P. (1994). The Firm as an Incentive System. {\em The American Economic Review}, \textbf{84}, 972-991.

\bibitem[Horowitz(2009)]{Horowitz_book}
Horowitz, J. (2009). Semiparametric and Nonparametric Methods in Econometrics. {\em Springer Series in Statistics}.

\bibitem[Horowitz and Spokoiny(2001)]{Horowitz2001}
Horowitz, J. L., and Spokoiny, V. (2001). An Adaptive, Rate-Optimal Test of a Parametric Mean-Regression Model against a Nonparametric Alternative. {\em Econometrica}, \textbf{69}, 599-631.



\bibitem[Kasy(2012)]{Kasy12}
Kasy, M. (2012). Identification in Continuous Triangular Systems with Unrestricted Heterogeneity. {\em Unpublished manuscript}.






\bibitem[Leadbetter, Lindgren, and Rootzen(1983)]{Leadbetter_book}
Leadbetter, M., Lindgren, G., and Rootzen, H. (1983). Extremes and Related Properties of Random Sequences and Processes. {\em Springer}.

\bibitem[Lehmann and Romano(2005)]{LehmannRomano2005}
Lehmann, E.L., and Romano, J. (2005). Testing Statistical Hypotheses. {\em Springer}.



\bibitem[Lee, Linton, and Whang(2009)]{LLW2009}
Lee, S., Linton, O., and Whang, Y. (2009). Testing for Stochastic Monotonicity. {\em Econometrica}, \textbf{27}, 585-602.

\bibitem[Lee, Song, and Whang(2011)]{LeeandSongandWhang2011}
Lee, S., Song, K., and Whang, Y. (2011). Testing function inequalities. {\em CEMMAP working paper CWP 12/11}.

\bibitem[Lee, Song, and Whang(2011b)]{LSW1}
Lee, S., Song, K., and Whang, Y. (2011). Nonparametric Tests of Monotonicity: an $L_p$ approach. {\em Unpublished manuscript}.



\bibitem[Liu(1988)]{Liu88}
Liu, R. (1988). Bootstrap Procedures under niid Models. {\em The Annals of Statistics}, \textbf{16}, 1696-1708.

\bibitem[Mammen(1993)]{Mammen93}
Mammen, E. (1993). Bootstrap and Wild Bootstrap for High Dimensional Linear Models. {\em The Annals of Statistics}, \textbf{21}, 255-285.

\bibitem[Manski and Pepper(2000)]{ManskiPepper}
Manski, C. and Pepper, J. (2000). Monotone Instrumental Variables: With an Application to Returns to Schooling. {\em Econometrica}, \textbf{68}, 997-1010.


\bibitem[Matzkin(1994)]{Matzkin94}
Matzkin, R. (1994). Restrictions of Economic Theory in Nonparametric Methods {\em Handbook of Econometrics, chapter 42}.


\bibitem[Merton(1974)]{Merton74}
Merton, R. (1974). On the Pricing of Corporate Debt: the Risk Structure of Interest Rates. {\em Journal of Finance}, \textbf{29}, 449-470.


\bibitem[Milgrom and Shannon(1994)]{MilgromShannon}
Milgrom, P. and Shannon, C. (1994). Monotone Comparative Statics. {\em Econometrica}, \textbf{62}, 157-180.


\bibitem[Morris and Shin(1998)]{MorrisShin98}
Morris, S. and Shin, H. (1998). Unique Equilibrium in a Model of Self-Fulfilling Currency Attacks. {\em The American Economic Review}, \textbf{88}, 587-597.

\bibitem[Morris and Shin(2003)]{MorrisShin2001}
Morris, S. and Shin, H. (2003). Global Games: Theory and Application. {\em Advances in Economics and Econometrics, Eight World Congress}, Volume 1, Chapter 3, 56-114.

\bibitem[Morris and Shin(2004)]{MorrisShin2004}
Morris, S. and Shin, H. (2004). Coordination Risk and the Price of Debt. {\em European Economic Review}, \textbf{48}, 133-153.

\bibitem[Muller(1991)]{Muller91}
Muller, H. (1991). Smooth Optimum Kernel Estimators near Endpoints. {\em Biometrika}, \textbf{78}, 521-530.

\bibitem[Muller and Stadtmuller(1987)]{Muller1987}
Muller, H. and Stadtmuller, U. (1987). Estimation of Heteroscedasticity in Regression Analysis. {\em The Annals of Statistics}, \textbf{15}, 610-625.

\bibitem[Newey(1997)]{Newey97}
Newey, W. (1997). Convergence Rates and Asymptotic Normality for Series Estimators. {\em Journal of Econometrics}, \textbf{79}, 147-168.

\bibitem[Newey and Powell(2003)]{NP03}
Newey, W. and Powell, J. (2003). Instrumental Variable Estimation of Nonparametric Models. {\em Econometrica}, \textbf{71}, 1565-1578.

\bibitem[Newey, Powell, and Vella(1999)]{NPV99}
Newey, W., Powell, J., and Vella, F. (1999). Nonparametric Estimation of Triangular Simultaneous Equations Models. {\em Econometrica}, \textbf{67}, 565-603.




\bibitem[Piterbarg(1996)]{Piterbarg_book}
Piterbarg, D. (1996). Asymptotic Methods in the Theory of Gaussian Processes and Fields. {\em American Mathematical Society}, \textbf{148}, Translation of Mathematical Monographs.

\bibitem[Pollard(1984)]{Pollard_book}
Pollard, D. (1984). Convergence of Stochastic Processes. {\em Springer-Verlag}.

\bibitem[Poppo and Zenger(1998)]{PoppoZenger98}
Poppo, L., and Zenger, T. (1998). Testing Alternative Theories of the Firm: Transaction Cost, Knowlegde-Based, and Measurement Explanations of Make-or-Buy Decisions in Information Services. {\em Strategic Management Journal}, \textbf{19}, 853-877.

\bibitem[Rice(1984)]{Rice1984}
Rice, J. (1984). Bandwidth Choice for Nonparametric Kernel Regression. {\em The Annals of Statistics}, \textbf{12}, 1215-1230.

\bibitem[Rio(1994)]{Rio94}
Rio, E. (1994). Local Invariance Principles and Their Application to Density Estimation. {\em Probability Theory and Related Fields}, \textbf{98}, 21-45.

\bibitem[Robinson(1988)]{Robinson88}
Robinson, P. M. (1988). Root-N-Consistent Semiparametric Regression. {\em Econometrica}, \textbf{56}, 931-954.

\bibitem[Romano and Shaikh(2010)]{RomanoShaikh2010}
Romano, J. and Shaikh, A. (2010). Inference for the Identified Sets in Partially Identified Econometric Models. {\em Econometrica}, \textbf{78}, 169-211.


\bibitem[Romano and Wolf(2005a)]{RomanoWolf2005}
Romano, J. and Wolf, M. (2005a). Exact and Approximate Stepdown Methods for Multiple Hypothesis Testing. {\em Journal of American Statistical Association}, \textbf{100}, 94-108.

\bibitem[Romano and Wolf(2005b)]{RomanoWolf2005ECMA}
Romano, J. and Wolf, M. (2005b). Stepwise Multiple Testing as Formalized Data Snooping. {\em Econometrica}, \textbf{73}, 1237-1282.

\bibitem[Romano and Wolf(2011)]{RomanoWolf11}
Romano, J. and Wolf, M. (2011). Alternative Tests for Monotonicity in Expected Asset Returns. {\em Unpublished manuscript}.


\bibitem[Schlee(1982)]{Schlee82}
Schlee, W. (1982). Nonparametric Tests of the Monotony and Convexity of Regression. {\em In Nonparametric Statistical Inference, Amsterdam: North-Holland}.



\bibitem[Tirole(1988)]{Tirole_book}
Tirole, J. (1988). The Theory of Industrial Organization. {\em Cambridge, MA: MIT Press}.

\bibitem[Tsybakov(2009)]{Tsybakov_book}
Tsybakov, A. (2009). Introduction to Nonparametric Estimation. {\em Springer}.

\bibitem[Van der Vaart and Wellner(1996)]{VaartWellner1996}
Van der Vaart, A. and Wellner, J. (1996). Weak Convergence and Empirical Processes with Applications to Statistics. {\em Springer}.

\bibitem[Wang and Meyer(2011)]{WangAndMeyer2011}
Wang, J. and Meyer, M. (2011). Testing the Monotonicity or Convexity of a Function Using Regression Splines. {\em The Canadian Journal of Statistics}, \textbf{39}, 89-107.

\bibitem[Wu(1986)]{Wu1986}
Wu, C. (1986). Jacknife, Bootstrap, and Other Resampling Methods in Regression Analysis. {\em The Annals of Statistics}, \textbf{14}, 1261-1295.


\end{thebibliography}
\end{document}